\newtheorem{theorem}{Theorem}[section]
\newtheorem{lemma}[theorem]{Lemma}
\newtheorem{problem}[theorem]{Problem}
\newtheorem{corollary}[theorem]{Corollary}
\theoremstyle{definition}
\theoremstyle{remark}
\newtheorem{remark}[theorem]{Remark}
\numberwithin{equation}{section}
\newcommand{\tpmod}[1]{{\@displayfalse\pmod{#1}}}
\def\rnum#1{\expandafter{\romannumeral #1}} 
\def\Rnum#1{\uppercase\expandafter{\romannumeral #1}} 
\newcommand{\1}{{\mathbf{1}}}
\newcommand{\tr}{\mathrm{tr}}
\newcommand{\Li}{\mathrm{Li}}
\newcommand{\ord}{\mathrm{ord}}
\newcommand{\rank}{\mathrm{rank}}
\newcommand{\Res}{\mathop{\mathrm{Res}}}
\newcommand{\C}{{\mathbb C}}
\newcommand{\R}{{\mathbb R}}
\newcommand{\Z}{{\mathbb Z}}
\newcommand{\Q}{{\mathbb Q}}
\newcommand{\N}{{\mathbb N}}
\newcommand{\F}{{\mathbb F}}
\newcommand{\biggg}[1]{{\hbox{$\left#1\vbox to 20.5pt{}\right.\n@space$}}}
\newcommand{\Biggg}[1]{{\hbox{$\left#1\vbox to 23.5pt{}\right.\n@space$}}}
\newcommand{\bigggg}[1]{{\hbox{$\left#1\vbox to 26.5pt{}\right.\n@space$}}}
\newcommand{\Bigggg}[1]{{\hbox{$\left#1\vbox to 29.5pt{}\right.\n@space$}}}
\newcommand{\biggggg}[1]{{\hbox{$\left#1\vbox to 32.5pt{}\right.\n@space$}}}
\newcommand{\Biggggg}[1]{{\hbox{$\left#1\vbox to 35.5pt{}\right.\n@space$}}}
\newcommand{\bigggggg}[1]{{\hbox{$\left#1\vbox to 38.5pt{}\right.\n@space$}}}
\newcommand{\Bigggggg}[1]{{\hbox{$\left#1\vbox to 41.5pt{}\right.\n@space$}}}
\begin{document}

\title{Euler products of Selberg zeta functions in the critical strip}


\author{Ikuya Kaneko}
\address{Tsukuba Kaisei High School, 3315-10 Kashiwada, Ushiku, 300-1211 Japan}
\curraddr{}
\email{ikuyak@icloud.com}
\thanks{}

\author{Shin-ya Koyama}
\address{Toyo University, 2100 Kujirai, Kawagoe, Saitama 350-8585 Japan}
\curraddr{}
\email{koyama@tmtv.ne.jp}
\thanks{}

\subjclass[2018]{Primary 11M36; Secondary 11M26 \and 11M06}

\keywords{Selberg zeta functions \and Partial Euler products \and Deep Riemann Hypothesis \and Explicit formula \and Prime geodesic theorem \and Selberg's eigenvalue conjecture}

\date{}

\dedicatory{}

\begin{abstract}
For any congruence subgroup of the modular group, we extend the region of convergence of the Euler products of the Selberg zeta functions beyond the boundary $\Re s = 1$, if they are attached with a nontrivial irreducible unitary representation. The region is determined by the size of the lowest eigenvalue of the Laplacian, and it extends to $\Re s \geqslant 3/4$ under Selberg's eigenvalue conjecture. More generally, for any unitary representation we establish the relation between the behavior of partial Euler products in the critical strip and the estimate of the error term in the prime geodesic theorem. For the trivial representation, the proof essentially exploits the idea of the celebrated work of Ramanujan.
\end{abstract}

\maketitle
\tableofcontents

\section{Introduction}
In 1874, Mertens~\cite{Mertens1874} studied \textit{partial Euler products} for the classical Riemann zeta function $\zeta(s)$ and the Dirichlet $L$-function $L(s, \chi_{4})$ at $s = 1$, where $\chi_{4}$ is the primitive character modulo 4 (cf.~\cite{Rosen1999}).
Ramanujan~\cite{Ramanujan1997} later extended, beyond the boundary, Mertens' fantastic result to $s > 0$ in the process of obtaining the maximal order of the divisor function. Notably, if the Riemann Hypothesis (RH) for $\zeta(s)$ is assumed, then we have
\begin{equation}\label{Ramanujan}
\prod_{p \leqslant x} (1-p^{-s})^{-1}
 = -\zeta(s) \exp \left(\Li(\vartheta(x)^{1-s})+\frac{2s x^{\frac{1}{2}-s}}{(2s-1) \log x}
 + \frac{S_{s}(x)}{\log x}+O \left(\frac{x^{\frac{1}{2}-s}}{(\log x)^{2}} \right) \right)
\end{equation}
in $1/2 < s < 1$, where $\vartheta(x) = \sum \limits_{p \leqslant x} \log p$ is Tchebyshev's function and
\begin{equation}\label{zeros}
S_{s}(x) = -s \sum_{\zeta(\rho) = 0, \, \Re \rho = \frac{1}{2}} \frac{x^{\rho-s}}{\rho(\rho-s)}.
\end{equation}
This is a rather close approximation and the sum~\eqref{zeros} stems from the \textit{explicit formula} for $\vartheta(x)$.

There have been some conjectural and allied topics beyond the scope of the Grand Riemann Hypothesis, GRH. The asymptotic formula~\eqref{Ramanujan} is related to the Deep Riemann Hypothesis (DRH)---a refined version of the GRH---which is an outstanding open problem that may be involved with development on the GRH. The DRH states, in short, conditional convergence or divergence behavior of partial Euler products in the critical strip.

More precisely, the DRH asserts the following for the case of Dirichlet $L$-functions. If $\chi \ne \1$ and a complex number $s$ satisfies $\Re s = 1/2$ with $m$ the order of vanishing of $L(s, \chi)$ at $s$, we have
\begin{equation}\label{DirichletL}
\lim_{x \to \infty} \left((\log x)^{m} \prod_{p \leqslant x} (1-\chi(p)p^{-s})^{-1} \right)
 = \frac{L^{(m)}(s,\chi)}{e^{m\gamma}m!} \times
\begin{cases}
	\sqrt{2} & \text{if $s = 1/2 \, \wedge \, \chi^{2} = \1$}\\
	1 & \text{otherwise}
\end{cases}.
\end{equation}
This implies that the Euler product converges on the critical line $\Re s = 1/2$ except at zeros of $L(s, \chi)$. The theorem of Conrad~\cite{Conrad2005} tells us that the following two propositions are equivalent: the limit in the left hand side of~\eqref{DirichletL} exists for \textit{some} $s$ on $\Re s = 1/2$ and it exists for \textit{every} $s$ on $\Re s = 1/2$. Moreover, the conjecture~\eqref{DirichletL} is equivalent to
\begin{equation*}
\vartheta(x, \chi) \coloneqq \sum_{p \leqslant x} \chi(p) \log p = o(\sqrt{x} \log x)
\end{equation*}
as is stated by Conrad~\cite[Theorem 6.2]{Conrad2005} (the GRH says $\vartheta(x, \chi) \ll \sqrt{x}(\log x)^{2}$). Figures illustrating~\eqref{DirichletL} can be found in Section 3 of~\cite{KimuraKoyamaKurokawa2014}.

We point out, as an example, the initial version of the Birch and Swinnerton-Dyer conjecture ((A) on p.79 in the original paper~\cite{BirchSwinnertonDyer1965}) concerning Euler products asymptotics for an elliptic curve $L$-function at the central point $s = 1$ (see also~\cite{Birch1965,BirchSwinnertonDyer1963,Wiles2006}). To illustrate this conjecture, let $E_{/\Q}$ be an elliptic curve given by the equation $y^{2} = x^{3}+ax+b$ with $N$ the conductor. Also we define $a_{p} = p+1-\# E(\F_{p})$ for $p \nmid N$ and $a_{p} = p-\# E(\F_{p})$ for $p \mid N$ with $E(\F_{p})$ the set of $\F_{p}$-rational points on a minimal Weierstrass model for $E$ at $p$.
We then introduce the $L$-function
\begin{equation}\label{ellipticcurveL}
L(s, E) \coloneqq \prod_{p \mid N} \frac{1}{1-a_{p} p^{-s}} \prod_{p \nmid N} \frac{1}{1-a_{p}p^{-s}+p^{1-2s}} 
\quad \text{for} \quad \Re s > 3/2.
\end{equation}
Birch and Swinnerton-Dyer conjectured that $\# E(\F_{p})$ obeys an asymptotic law
\begin{equation}\label{BirchSwinnertonDyer}
\prod_{p \leqslant x, \, p \nmid N} \frac{\# E(\F_{p})}{p} \sim A(\log x)^{r} \qquad \text{as} \qquad x \to \infty
\end{equation}
for some constant $A > 0$ (depending on $E$), where $r$ is the rank of $E$.

For notational convenience we define
\begin{equation*}
\mathrm{Prod}(x, E) = \prod_{\substack{p \mid N \\ p \leqslant x}} \frac{1}{1-a_{p}/p} 
\prod_{\substack{p \nmid N \\ p \leqslant x}} \frac{1}{1-a_{p}/p+1/p}
 = \prod_{p \nmid N, \, p \leqslant x} \frac{1}{\# E(\F_{p})/p}.
\end{equation*}
Goldfeld~\cite{Goldfeld1982} later proved that
\begin{theorem}[Goldfeld~\cite{Goldfeld1982}]\label{Goldfeld}
Let $E_{/\Q}$ be an elliptic curve. If $\mathrm{Prod}(x, E) \sim C/(\log x)^{r}$ as $x \to \infty$, where $C > 0$ and $r \in \R$, then $L(E, s)$ satisfies the GRH, i.e. $L(E, s) \ne 0$ for $\Re s > 1$, $r = \ord_{s = 1} \, L(E, s)$ and $C = L^{(r)}(1, E)/(\sqrt{2} e^{r \gamma} r!)$.
\end{theorem}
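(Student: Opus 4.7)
The plan is to convert the partial-product hypothesis into a partial-sum asymptotic over primes, then transfer this to analytic information about $L(s, E)$ near $s = 1$ via a Tauberian / explicit-formula argument. Take logarithms of $\mathrm{Prod}(x,E)$ and expand
\[
-\log L_p(1, E) = \sum_{k \geqslant 1} \frac{\alpha_p^k + \beta_p^k}{k p^k}, \qquad \alpha_p + \beta_p = a_p, \quad \alpha_p \beta_p = p,
\]
for $p \nmid N$. By Hasse's bound $|\alpha_p| = |\beta_p| = \sqrt{p}$, the contributions with $k \geqslant 3$ are absolutely summable over $p$. The $k = 2$ term equals $a_p^2/(2p^2) - 1/p$, so Mertens' theorem $\sum_{p \leqslant x} 1/p = \log\log x + O(1)$ yields
\[
\log \mathrm{Prod}(x, E) = \sum_{p \leqslant x} \frac{a_p}{p} + \frac{1}{2} \sum_{p \leqslant x} \frac{a_p^2}{p^2} - \log\log x + O(1),
\]
and combining with the hypothesis gives, for an explicit constant $c_0$,
\[
\sum_{p \leqslant x} \frac{a_p}{p} + \frac{1}{2} \sum_{p \leqslant x} \frac{a_p^2}{p^2} = (1 - r) \log\log x + c_0 + o(1).
\]

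Next, transfer to analytic properties of $L(s, E)$. The Dirichlet series $F(s) := \sum_p a_p/p^s + (1/2)\sum_p a_p^2/p^{2s}$ differs from $\log L(s, E) + \log(s-1)$ by a function holomorphic at $s = 1$ (the discrepancy $-\sum_p p^{1-2s}$ produces the $\log \zeta(2s-1)$ pole), so the previous asymptotic controls $\log L(s, E)$ as $s \to 1^+$. Promoting this via Abel summation and Perron inversion applied to $-L'(s, E)/L(s, E) = \sum_n \Lambda_E(n)\, n^{-s}$ (with $\Lambda_E(p^k) = (\alpha_p^k + \beta_p^k) \log p$), one shows that on the line $\Re s = 1$ the only singularity of $-L'/L$ is a simple pole at $s = 1$ of residue $r$; equivalently, $L(s, E)$ has a zero of order exactly $r$ at $s = 1$ and no other zero on $\Re s = 1$. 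Combined with absolute convergence of the Euler product in $\Re s > 3/2$ and the functional equation $s \leftrightarrow 2 - s$, this forces every nontrivial zero of $L(s, E)$ onto the line $\Re s = 1$---the GRH in this normalization---and pins down the order of vanishing as $r$. The leading constant $C = L^{(r)}(1, E)/(\sqrt{2}\, e^{r\gamma} r!)$ is then extracted by matching $c_0$ against the Laurent expansion $L(s, E) = (L^{(r)}(1, E)/r!)(s - 1)^r + \cdots$; the factor $\sqrt{2}$ encodes the self-duality of $L(s, E)$ (real Dirichlet coefficients pair zeros with their complex conjugates), exactly as in~\eqref{DirichletL} for real characters.

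The main obstacle is excluding zeros of $L(s, E)$ on the line $\Re s = 1$ away from $s = 1$. Since $a_p$ is not sign-definite, Wiener--Ikehara cannot be applied directly to $\sum_n \Lambda_E(n)\, n^{-s}$, and the partial-sum asymptotic by itself only constrains the real-axis behavior of $L'/L$. The standard remedy is to introduce an auxiliary nonnegative Dirichlet series---for instance the symmetric square $L(s, \mathrm{sym}^2 E)$ or the Rankin--Selberg convolution $L(s, E \otimes E)$---and to combine its positivity with the first-step asymptotic in a Hadamard--de la Vall\'ee-Poussin-style argument, showing that a hypothetical zero of $L(s, E)$ at $1 + it_0$ with $t_0 \neq 0$ would contradict the partial-Euler-product hypothesis.
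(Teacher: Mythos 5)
The paper cites this as Goldfeld's theorem without reproducing a proof, so there is no in-paper argument to compare against; I can only assess your proposal on its own terms. Your opening is sound: expanding $\log L_p(1,E)$ (note the sign—it should be $+\log L_p(1,E) = \sum_k (\alpha_p^k+\beta_p^k)/(kp^k)$, since $L_p = (1-\alpha_p p^{-s})^{-1}(1-\beta_p p^{-s})^{-1}$), controlling the $k\geqslant 3$ tail absolutely via Hasse, peeling off $-\log\log x$ from the $k=2$ piece by Mertens, and reading off $\sum_{p\leqslant x} a_p/p + \tfrac12\sum_{p\leqslant x} a_p^2/p^2 = (1-r)\log\log x + c_0 + o(1)$ from the hypothesis. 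You are also right that the symmetric square / Rankin--Selberg is where the $\sqrt{2}$ comes from.

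The GRH step, however, is based on a misreading of what GRH asserts in this normalization. With the functional equation $s\leftrightarrow 2-s$, the critical line is $\Re s = 1$ and GRH says precisely that all nontrivial zeros lie \emph{on} that line, i.e.\ $L(s,E)\neq 0$ for $\Re s > 1$, as the paper itself states. You propose to show that $-L'/L$ has ``no other zero on $\Re s = 1$'' and then claim this ``forces every nontrivial zero onto the line $\Re s = 1$''—but these two sentences contradict one another, the first is simply false (every such $L$-function has infinitely many zeros on $\Re s = 1$), and it is not what the theorem claims. Your ``main obstacle'' paragraph is therefore aimed at a nonproblem: there is no need to exclude zeros at $1+it_0$ with $t_0\neq 0$, and the Hadamard--de la Vall\'ee Poussin positivity device you invoke to do so is out of place here. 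The step ``absolute convergence in $\Re s > 3/2$ plus the functional equation forces zeros onto $\Re s = 1$'' is also unjustified: it rules out zeros only in $\Re s > 3/2$ and $\Re s < 1/2$, leaving the whole strip $1/2 \leqslant \Re s \leqslant 3/2$ unconstrained.

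What the hypothesis actually delivers is softer and more direct. Once $\sum_{p\leqslant x} a_p/p - (\tfrac12 - r)\log\log x$ is bounded, a single Abel summation shows $\sum_p a_p p^{-s}$ converges conditionally throughout $\Re s > 1$, hence is holomorphic there; the remaining pieces of $\log L(s,E)$ are likewise holomorphic in $\Re s > 1$ ($k\geqslant 3$ absolutely; $\sum_p p^{1-2s}$ since $\zeta(2s-1)\neq 0$; $\sum_p a_p^2 p^{-2s}$ via Rankin--Selberg nonvanishing). Thus $\log L(s,E)$ is holomorphic in $\Re s > 1$ and $L(s,E)\neq 0$ there---that \emph{is} the GRH conclusion. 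Then $r = \ord_{s=1}L(s,E)$ and the value of $C$ fall out by letting $s\to 1^+$ and matching the coefficient of $\log(s-1)$ and the constant term with the Laurent expansion; the $\tfrac12\log 2$ giving $\sqrt 2$ arises because the $\log$-singularities of $\sum_p a_p^2 p^{-2s}$ and $\sum_p p^{1-2s}$ at $s=1$ are each dilated by the factor $2$ in the exponent. Your proposal gestures at this constant-matching but replaces the central Abel-summation-to-GRH deduction with an incorrect zero-free-line argument, which is the genuine gap.
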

As one of byproducts of Theorem~\ref{Goldfeld}, it turned out that the conjecture~\eqref{BirchSwinnertonDyer} alludes the \textit{well-known form of the Birch and Swinnerton-Dyer conjecture}: $\ord_{s = 1} \, L(s, E) = \rank \, E(\Q)$, with $E(\Q)$ the Mordell-Weil group of a curve $E_{/\Q}$. 
Notice that the extra $\sqrt{2}$ factor in the denominator of $C$ is of intrinsic importance and likewise, it appears in Ramanujan's formula (359) of~\cite{Ramanujan1997}\footnote{In general, for Hasse-Weil zeta functions, $\sqrt{2}$ is to be replaced by $\sqrt{2}^{c}$ with $c$ being the order of the pole of the second moment Euler product $L^{(2)}(s) = L(s, \mathrm{Sym}^{2})/L(s, \wedge^{2})$ at $s = 1$.}.

K. Conrad~\cite{Conrad2005} in 2005 analyzed partial Euler products for a typical $L$-function along its critical line, and demystified the $\sqrt{2}$ factor in the general context. To be precise, the appearance of this factor is governed by ``second moments.'' He found the equivalence between the Euler product asymptotics and the tight estimate $\psi_{L}(x) \coloneqq \sum_{N \frak{p}^{k} \leqslant x} (\alpha_{\frak{p}, 1}^{k}+\dots+\alpha_{\frak{p}, d}^{k}) \log N \frak{p} = o(\sqrt{x} \log x)$ which is stronger than the GRH. Concurrently Kuo and Murty~\cite{KuoMurty2005} established the equivalence between the Birch and Swinnerton-Dyer conjecture and a certain growth condition for the sum over the Frobenius eigenvalues at $p$.
Analogously, Akatsuka~\cite{Akatsuka2017} has recently studied the DRH\footnote{Unfortunately, as of September in 2018, we should note that a serious misunderstanding lies in the review of Akatsuka's paper in the MathSciNet, where the reviewer writes that Akatsuka studies one of equivalence conditions to the RH.} for $\zeta(s)$, capturing the divergent behavior of the partial Euler product by means of a modified logarithmic integral. That is equivalent to $\vartheta(x) = x+o(\sqrt{x} \log x)$.

Rigorous proofs of the DRH are given in the case of zeta functions over function fields (\cite{KimuraKoyamaKurokawa2014,KoyamaSuzuki2014}). The case of Selberg zeta functions (characteristic zero) is yet to be resolved and therefore it is worthwhile to examine the behavior of partial Euler products of Selberg zeta functions attached with a unitary representations. In this paper we address this problem comprehensively and describe what is indispensable to specify the behavior in several facets. In fact, despite the analogue of the RH for Selberg zeta functions, our present knowledge of these in the critical strip is not satisfactory. In the case of the trivial representation, we draw upon Ramanujan's extraordinary method, but in other cases, we have to seek other strategies and reconfigure the process of the proof.

Let $\Gamma \subset PSL(2, \R)$ be a cofinite Fuchsian group acting discontinuously on the upper half-plane $\mathbb{H}$. There is a lucid analogy between prime numbers and primitive (prime) closed geodesics on $\Gamma \backslash \mathbb{H}$. Any matrix in a hyperbolic conjugacy class in $\Gamma$ is conjugate over $SL(2, \R)$ to a matrix of the form $\begin{pmatrix} \lambda && 0 \\ 0 && \lambda^{-1} \end{pmatrix}$ with $\lambda > 1$. We then denote the norm of such a hyperbolic conjugacy class to be $N(p) = \lambda^{2}$. The Selberg zeta function of $\Gamma$ attached with its finite-dimensional unitary representation $\rho$ is defined as the following Euler product:
\begin{equation*}
Z_{\Gamma}(s, \rho) = \prod_{p} \prod_{n = 0}^{\infty} \det \left(I_{\dim \rho}-\rho(p) N(p)^{-s-n} \right),
\end{equation*}
the outer product being over the set of all primitive hyperbolic conjugacy classes $p$ in $\Gamma$. It is absolutely convergent in $\Re s > 1$, and has a meromorphic continuation to $\C$. Throughout this paper we assume $\rho$ is irreducible. The proofs for general cases are straightforward, since a decomposition of a unitary representation corresponds to that of the Selberg zeta function (see~\cite{Venkov1982,Venkov1990}). As usual, when $\rho$ is the trivial representation $\1$, we put $Z_{\Gamma}(s, \1) = Z_{\Gamma}(s)$. Such an abbreviation also applies to other functions below.

Denote
\begin{equation*}
\Theta_{\Gamma}(x, \rho) = \sum_{N(p) \leqslant x} \tr(\rho(p)) \log N(p)
\end{equation*}
and
\begin{equation*}
\Psi_{\Gamma}(x, \rho) = \sum_{N(p)^{k} \leqslant x} \tr(\rho(p)^{k}) \log N(p),
\end{equation*}
where $\sum_{N(p)^{k} \leqslant x}$ stands for the sum over all pairs of $(p, k)$ with $p$ as above and positive integers $k$ such that $N(p)^{k} \leqslant x$. Hereafter we apply this notation for the sums below unless otherwise stated. Then the \textit{prime geodesic theorem}
is expressed in the following form:
\begin{equation}\label{pgt}
\Psi_{\Gamma}(x, \rho) = \sum_{\frac{1}{2} < s_{j} \leqslant 1} \frac{x^{s_{j}}}{s_{j}}+\mathcal{E}_{\Gamma}(x, \rho),
\end{equation}
where the sum runs over the exceptional zeros of the Selberg zeta function $Z_{\Gamma}(s, \rho)$ counted with multiplicity. The error term $\mathcal{E}_{\Gamma}(x, \rho)$ plays a crucial role in our analyzing partial Euler products.
By detailed analysis of the Selberg trace formula, the error term is known to be estimated as $\mathcal{E}_{\Gamma}(x, \rho) \ll x^{\frac{3}{4}}$ for a general cofinite $\Gamma$ (cf.~\cite{Huber1961,Huber1961-2,Randol1977,Sarnak1980}~\cite[pp.426--474]{Selberg1989}). This is called the ``trivial bound.'' 
The subject concerning a formula with a better error term attracts many mathematicians and has been intensively investigated. Given that the RH holds for $Z_{\Gamma}(s, \rho)$ apart from a finite number of the exceptional zeros on the real axis, one may expect $\mathcal{E}_{\Gamma}(x, \rho) \ll x^{\frac{1}{2}+\varepsilon}$. But the abundance of the eigenvalues puts this bound out of reach\footnote{We would like to point out that over function fields, one can prove the prime geodesic theorem with the error term $O(x^{\frac{1}{2}}/\log x)$ (\cite{Nagoshi2001-2}). In~\cite{KoyamaSuzuki2014} it was confirmed that the DRH holds for Selberg zeta functions of a principal congruence group $\Gamma(A) = \{\gamma \in PGL(2, \F_{q}[t]): \gamma \equiv I \pmod{A} \}$ for $A \in \F_{q}[t]$. It follows straightforwardly that the DRH holds for other $L$-functions which are regular at $s = 1$ over global function fields (cf.~\cite{KimuraKoyamaKurokawa2014}). Accordingly Euler product asymptotics can sometimes be proved unconditionally.}.

There are several intriguing ways to conquer this problem for certain arithmetic groups. The first is Iwaniec's method which is mainly based on the Kuznetsov formula~\cite{Kuznetsov1981} and the famous result of Burgess~\cite{Burgess1963} on character sums. Iwaniec~\cite[p.187]{Iwaniec1984-2}~\cite{Iwaniec1984} first broke the 3/4-barrier for the case that $\rho = \1$ and $\Gamma = SL(2, \Z)$: $\mathcal{E}_{\Gamma}(x) \ll x^{\frac{35}{48}+\varepsilon}$, and remarked that the exponent 2/3 would follow from the Generalized Lindel\"{o}f Hypothesis for Dirichlet $L$-functions. Note that even if we employ the Kuznetsov formula and Weil's bound for the Kloosterman sums, we are blocked by the 3/4-barrier. Furthermore, it is stated that the exponent 1/2 can be deduced conditionally on a kind of extended Linnik and Selberg's conjecture (cf.~\cite{DeshouillersIwaniec1982,GoldfeldSarnak1983,Iwaniec1984,SarnakTsimerman2009,Steiner2017}). Iwaniec's proof was enhanced by Luo and Sarnak~\cite{LuoRudnickSarnak1995,LuoSarnak1995}, by appealing to the work of Hoffstein and Lockhart~\cite{HoffsteinLockhart1994}, who accordingly obtained the considerable advancement $\mathcal{E}_{\Gamma}(x) \ll x^{\frac{7}{10}+\varepsilon}$ for any congruence subgroup (see also~\cite{Cai2002}). In this connection, approaches to the spectral exponential sum $\sum_{t_{j} \leqslant T} x^{it_{j}}$ with $s_{j} = 1/2+it_{j}$ have been studied in~\cite{BalkanovaFrolenkov2018,PetridisRisager2017}.

The second is the method formulated by Soundararajan and Young~\cite{SoundararajanYoung2013}, which leaves the theory of automorphic functions on a side. They have shown the prime geodesic theorem with the exponent 25/36, which is the best known, and recover Iwaniec's ingenious result from the entirely different aspect (\cite[Theorem 3.2]{SoundararajanYoung2013}). Recently the exponent less than 25/36 has been derived in square mean sense (see~\cite{BalogBiroHarcosMaga2018,CherubiniGuerreiro2018}).

We are ending the introduction by outlining our results. For convenience, we now introduce auxiliary zeta functions:
\begin{align}\label{partial1}
Z_{\Gamma, x}(s, \rho)& = \prod_{N(p) \leqslant x} \prod_{n = 0}^{\infty} \det(I_{\dim \rho}-\rho(p) N(p)^{-s-n}), \nonumber\\
\zeta_{\Gamma, x}(s, \rho)& = \prod_{N(p) \leqslant x} \det(I_{\dim \rho}-\rho(p) N(p)^{-s})^{-1}.
\end{align}
In what follows let $\lim_{x \to \infty} \zeta_{\Gamma, x}(s, \rho) = \zeta_{\Gamma}(s, \rho) = Z_{\Gamma}(s+1, \rho)/Z_{\Gamma}(s, \rho)$ for $\Re s > 1$, and one then finds that it bears a pole at $s = s_{0} = 1$ iff $\rho = \1$.

In Section~\ref{Iwaniec}, we solve Iwaniec's problem about the distribution of ``pseudo-primes'' $N(p)$ for any congruence subgroup of $SL(2, \Z)$ within certain short intervals. Thus we arrive at a slight improvement of the error term in the explicit formula for $\Psi_{\Gamma}(x)$. These results are useful in the following sections.

The goal of Section~\ref{divEulerprod} is to explain how the partial Euler products $\zeta_{\Gamma, x}(s)$ behave in the critical strip. We execute the evaluation of the divergent behavior of $\zeta_{\Gamma, x}(s)$ as $x \to \infty$ for prototypical arithmetic subgroups of $SL(2, \R)$. That is to say, we deal with congruence subgroups of $SL(2, \Z)$, or cocompact subgroups $\Gamma_{D}$ arising from quaternion algebras constructed as follows: let $D = \left(\frac{a, b}{\Q} \right)$ be an indefinite division quaternion algebra over $\Q$. It is linearly generated by $1, \, \omega, \, \Omega, \, \omega \Omega$ over $\Q$ with $\omega^{2} = a, \,\Omega^{2} = b$ and $\omega \Omega+\Omega \omega = 0$, where $a, \, b \in \Z$ are square free and we assume that $(a, b) = 1$. 
The trace and norm maps are defined by $N(\alpha) = \alpha \overline{\alpha}$ and $\tr(\alpha) = \alpha+\overline{\alpha}$, where if $\alpha = x_{0}+x_{1} \omega+x_{2} \Omega+x_{3} \omega \Omega$, we put $\overline{\alpha} = x_{0}-x_{1} \omega-x_{2} \Omega-x_{3} \omega \Omega$. Denote by $R$ a maximal order in $D$ (or more generally an Eichler order~\cite{Eichler1955}), and for $m \geqslant 1$ let $R(m) = \{x \in R: N(x) = m \}$. In particular $R(1)$ consists of elements of norm 1 which acts on $R(m)$ by multiplication on the left and it is known that the orbits $R(1) \backslash R(m)$ are finite (see~\cite{Eichler1955}). Fix an embedding $\varphi \colon D \to M_{2}(\Q(\sqrt{a}))$,
\begin{equation*}
\varphi(\alpha) = \begin{pmatrix} \xi & \eta \\ b \overline{\eta} & \overline{\xi} \end{pmatrix} \in M_{2}(\Q(\sqrt{a}))
\end{equation*}
where $\alpha = x_{1}+x_{1} \omega+(x_{2}+x_{3} \omega) \Omega = \xi+\eta \Omega$.
In this way we define $\Gamma_{D} = \varphi(R(1)) \subseteq SL(2, \R)$. Assuming that $D$ is a division algebra guarantees that the quotient $\Gamma_{D} \backslash \mathbb{H}$ is furnished with compactness (see~\cite{GelfandGraevPyatetskiiShapiro1969}).

Koyama~\cite{Koyama1998} proved that the set of Laplace eigenvalues for $\Gamma_{D} \backslash \mathbb{H}$ is equal to the set of those of a new form (\cite{AtkinLehner1970}) for $\Gamma_{0}(N) \backslash \mathbb{H}$ with $N$ denoting the product of all ramified primes. His proof uses the Jacquet-Langlands correspondence whose image is explicitly written by Hejhal~\cite{Hejhal1985}. 
The prime geodesic theorem with exponent 7/10 holds for congruence subgroups of $SL(2, \Z)$ and $\Gamma_{D}$ (\cite{Koyama1998,LuoRudnickSarnak1995,LuoSarnak1995}). This fact is effectively used as a key ingredient in Section~\ref{divEulerprod}\footnote{The prime geodesic theorem with exponent 7/10 would hold even for $\Gamma_{D}$ in a special situation where the order of $D$ is non-maximal (see~\cite{Hejhal1985} where it is represented as $\mathcal{O}$).}.

Throughout this paper we define that
\begin{equation}\label{Gamma}
\text{$\Gamma$ is a typical arithmetic group} \quad \underset{\mathrm{def}}{\Longleftrightarrow} \quad
\text{$\Gamma$ is a congruence subgroup of $SL(2, \Z)$ or $\Gamma = \Gamma_{D}$}.
\end{equation}

There are several methods to analyze partial Euler products, we opt to tailor what Ramanujan conjured to the present problem. His dedicated argument differs from others which have been hitherto devised, and it is interesting to adopt this method to our analysis. By virtue of this we deduce one of the main results:
\begin{theorem}[Theorem~\ref{divergent}. Case I\hspace{-.1mm}I\hspace{-.1mm}I]\label{EulerProdDivergence}
Let $\Gamma$ be a typical arithmetic group defined by~\eqref{Gamma}. For $s = \sigma+it$ with $\sigma > 1/2$ we then have
\begin{equation*}
\zeta_{\Gamma, x}(s)
 = \varepsilon_{\Gamma}(s) \, \zeta_{\Gamma}(s) \exp \biggg(\Li(\Theta_{\Gamma}(x)^{1-s})
 + \sum_{\frac{1}{2} < s_{j} < 1} \Li(x^{s_{j}-s})
 - \frac{x^{-s}}{\log x} \sum_{\frac{1}{2} < s_{j} < 1} \frac{x^{s_{j}}}{s_{j}}+O(x^{\frac{1}{2}-\sigma}(\log x)^{2}) \biggg),
\end{equation*}
where $\varepsilon_{\Gamma}(s) = \pm 1$ (for details, see Remark~\ref{epsilonremark}) and $\Li(x)$ is the principal value of $\displaystyle{\int_{0}^{x} \mathrm{d} t/\log t}$.
\end{theorem}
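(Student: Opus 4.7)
The plan is to adapt Ramanujan's argument leading to~\eqref{Ramanujan}, replacing the Chebyshev function $\vartheta$ by $\Theta_{\Gamma}$ and the classical explicit formula for $\zeta(s)$ by the prime geodesic theorem~\eqref{pgt}. First I would take the logarithm of $\zeta_{\Gamma,x}(s)$ and expand
\[
\log \zeta_{\Gamma,x}(s) = \sum_{N(p) \leqslant x}\sum_{k \geqslant 1} \frac{N(p)^{-ks}}{k},
\]
splitting the $(p,k)$-sum into an on-diagonal part with $N(p)^{k} \leqslant x$ and a tail with $N(p) \leqslant x < N(p)^{k}$. The tail is absolutely $O(x^{1/2-\sigma}/\log x)$ for $\sigma > 1/2$, because only $k \geqslant 2$ contributes, forcing $N(p) > \sqrt{x}$, and partial summation against the prime geodesic theorem then gives the stated bound. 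The on-diagonal piece is the Riemann--Stieltjes integral $\int_{1}^{x}\mathrm{d}\Psi_{\Gamma}(u)/(u^{s}\log u)$, into which I would insert~\eqref{pgt}.

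Next I would decompose $\Psi_{\Gamma}(u) = u + \sum_{1/2 < s_{j} < 1} u^{s_{j}}/s_{j} + E_{\Gamma}(u)$ and integrate term by term. The substitution $t = u^{s_{j}-s}$ in the definition of $\Li$ converts each exceptional main term into $\Li(x^{s_{j}-s})$ plus a constant. The $s_{0} = 1$ contribution naively yields $\Li(x^{1-s})$, but following Ramanujan I would instead absorb the main-term error of the prime geodesic theorem into the argument of $\Li$: since $\Theta_{\Gamma}(x) = x + O(x^{7/10+\varepsilon})$ for typical arithmetic $\Gamma$ by~\cite{LuoRudnickSarnak1995,LuoSarnak1995,Koyama1998}, a Taylor expansion gives
\[
\Li(\Theta_{\Gamma}(x)^{1-s}) - \Li(x^{1-s}) = \frac{\Theta_{\Gamma}(x) - x}{x^{s}\log x} + O(x^{1/2-\sigma}(\log x)^{-1}),
\]
so substituting $\Li(\Theta_{\Gamma}(x)^{1-s})$ for $\Li(x^{1-s})$ cleanly repackages the leading boundary contribution. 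The residual piece $-(x^{-s}/\log x)\sum_{1/2 < s_{j} < 1} x^{s_{j}}/s_{j}$ arises analogously from integration by parts applied to $\Li(u^{s_{j}-s})$ at $u = x$. The resulting multiplicative constant is identified with $\varepsilon_{\Gamma}(s)\zeta_{\Gamma}(s)$ by analytic continuation from $\Re s > 1$, where $\zeta_{\Gamma,x}(s) \to \zeta_{\Gamma}(s)$ in the genuine sense of Euler product convergence; the sign $\varepsilon_{\Gamma}(s) = \pm 1$ records the branch of the complex logarithm across sign changes of $\zeta_{\Gamma}(s)$ on the real interval $(1/2,1)$.

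The remaining and hardest step is to bound the contribution of the zeros $1/2 + it_{j}$ of $Z_{\Gamma}(s)$ on the critical line by $O(x^{1/2-\sigma}(\log x)^{2})$. The pointwise estimate $E_{\Gamma}(u) \ll u^{7/10+\varepsilon}$ is far too weak here, as inserting it directly would produce only $O(x^{7/10-\sigma+\varepsilon})$, so I would instead exploit the oscillation of $u^{1/2+it_{j}}$ via a truncated explicit formula with spectral cutoff $T$, combined with the refined distribution of pseudo-primes in short intervals proved in Section~\ref{Iwaniec}. This step is delicate precisely because Weyl's law produces $\sim T^{2}$ spectral parameters $t_{j}$ with $|t_{j}| \leqslant T$, far denser than the $T\log T$ zeros of $\zeta(s)$; it is only the arithmetic input of Section~\ref{Iwaniec}---ultimately rooted in the Kuznetsov-formula techniques of Iwaniec, Luo--Rudnick--Sarnak, and Soundararajan--Young---that allows Ramanujan's tight error estimate to survive in the Selberg setting.
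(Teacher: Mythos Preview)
Your approach is correct and reaches the same conclusion, but it is \emph{not} Ramanujan's method as the paper implements it. The paper's proof of Theorem~\ref{divergent} works at the level of the logarithmic derivative: it applies partial summation with $\Phi(t)=1/(t^{s}-1)$ to the sum $\sum_{N(p)\leqslant x}\log N(p)/(N(p)^{s}-1)$, and the decisive manoeuvre is a second-order Taylor expansion (equation~\eqref{volcano}) that replaces $\int^{x}\Phi$ by $\int^{\Theta_{\Gamma}(x)}\Phi$ \emph{before} anything else happens, so that the $\Li(\Theta_{\Gamma}(x)^{1-s})$ term emerges organically from $\int^{\Theta_{\Gamma}(x)}dt/(t^{s}-1)$. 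Only afterwards does the paper integrate in $s$ from $\infty$ to $\sigma$ to recover $\log\zeta_{\Gamma,x}(s)$. Your route---write $\log\zeta_{\Gamma,x}(s)$ directly as $\int d\Psi_{\Gamma}(u)/(u^{s}\log u)$, extract $\Li(x^{1-s})$, and then substitute $\Li(\Theta_{\Gamma}(x)^{1-s})$ post hoc via a Taylor correction---is instead the method of Section~\ref{convEulerprod} (Theorem~\ref{convergent}) supplemented by the expansion the paper records as Corollary~3.2. The paper in fact remarks that the two packagings are equivalent.

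What each buys: Ramanujan's route handles all three cases $0<\sigma<\tfrac12$, $\sigma=\tfrac12$, $\sigma>\tfrac12$ uniformly and makes the appearance of $\Theta_{\Gamma}(x)$ in the argument of $\Li$ look inevitable rather than inserted by hand; the $\sqrt{2}$ at $s=\tfrac12$ (equation~\eqref{root}) is visible in that framework. Your route is more direct and is precisely what the paper needs in Section~\ref{convEulerprod} to treat nontrivial $\rho$, where Ramanujan's partial summation against $\Theta_{\Gamma}(x,\rho)$ breaks down. One caution on your ``hardest step'': the sharp error $O(x^{1/2-\sigma}(\log x)^{2})$ does not come from Luo--Sarnak-type cancellation in $\sum x^{it_{j}}$ but from the fact that, after integration by parts, the integral of $u^{s_{j}}/s_{j}$ against $u^{-s-1}/\log u$ produces a factor $1/(s_{j}(s_{j}-s))\sim t_{j}^{-2}$, which is already summable by Weyl's law; the Section~\ref{Iwaniec} input is needed only to secure the $O(\sqrt{u}(\log u)^{3})$ error in Lemma~\ref{slight}. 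Your explanation of the residual term $-x^{-s}(\log x)^{-1}\sum x^{s_{j}}/s_{j}$ is also slightly off: it arises from the exceptional-eigenvalue part of $\Theta_{\Gamma}(x)-x$ in your Taylor substitution, not from integrating $\Li(u^{s_{j}-s})$ by parts.
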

We also obtain the results for the cases of $0 < \Re s < 1/2$ and $\Re s = 1/2$ (respectively Cases I and I\hspace{-.1mm}I) that we omit here. At $s = 1/2$, which falls into Case I\hspace{-.1mm}I, the $\sqrt{2}$ factor actually emerges but it is absorbed by the error term $O((\log x)^{2})$. Although $\sqrt{2}$ ordinarily appears from some series rearrangement, the striking point in Ramanujan's method is that it unexpectedly arises from a certain limit formula.

It remains to demonstrate the behavior in the case that $\rho \ne \1$. This topic is discussed in Section~\ref{convEulerprod}. An alternative way of observing the behavior of $\zeta_{\Gamma, x}(s, \rho)$ rests on a direct expression of $\zeta_{\Gamma, x}(s, \rho)$ by a psi function with certain weighting (after picking up the coefficients of the psi function by means of Perron's integral).

\begin{theorem}\label{EulerProdConvergence}
Let $\Gamma$ be a typical arithmetic group defined by~\eqref{Gamma} and $\rho$ any unitary representation (not necessarily nontrivial). For $\Re s > 1/2$ we have
\begin{equation*}
\zeta_{\Gamma, x}(s, \rho)
 = \varepsilon_{\Gamma}(s, \rho) \, \zeta_{\Gamma}(s, \rho)
\times \exp \biggg(\sum_{\frac{1}{2} < s_{j} \leqslant 1} \Li(x^{s_{j}-s})
 + \frac{x^{\frac{1}{2}-s}}{\log x} \sum_{|t_{j}| \leqslant T}\frac{x^{it_{j}}}{s_{j}}+O \left(\frac{x^{1-\sigma}}{T} \log x \right) \biggg)
\end{equation*}
provided $1 \leqslant T \leqslant \sqrt{x}/\log x$, where $\varepsilon_{\Gamma}(s, \rho) = \pm 1$ (see Lemma~\ref{Conrad}).
\end{theorem}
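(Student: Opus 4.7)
The plan is to pass through logarithms, reduce to a Stieltjes integral against $\Psi_\Gamma(\cdot,\rho)$, and feed in a truncated explicit formula for the psi-function to read off both the $\Li$-terms and the oscillatory boundary term. Explicitly, expanding $-\log\det(I-\rho(p)N(p)^{-s})=\sum_{k\ge 1}\tr(\rho(p)^k)/(k N(p)^{ks})$ gives
\begin{equation*}
\log\zeta_{\Gamma,x}(s,\rho)=\sum_{N(p)\le x}\sum_{k\ge 1}\frac{\tr(\rho(p)^k)}{k\,N(p)^{ks}}=S_1(x,s)+R_1(x,s),
\end{equation*}
where $S_1$ restricts to the $(p,k)$ with $N(p)^k\le x$ and $R_1$ collects the "overflow" pairs with $N(p)\le x<N(p)^k$, $k\ge 2$. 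For $\Re s=\sigma>1/2$, $R_1$ is bounded by $\dim\rho \cdot \sum_{N(p)\le x}N(p)^{-2\sigma}\log\log(N(p)+3)$, which is absolutely convergent and is absorbed into the error. The remaining sum $S_1$ is precisely the Riemann--Stieltjes integral $\int_{1^-}^{x} d\Psi_\Gamma(u,\rho)/(u^s\log u)$, since the jump of $\Psi_\Gamma$ at $u=N(p)^k$ is $\tr(\rho(p)^k)\log N(p)$ and $\log N(p)^k=k\log N(p)$.

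Next, I would plug in the standard truncated explicit formula
\begin{equation*}
\Psi_\Gamma(u,\rho)=\sum_{\frac12<s_j\le 1}\frac{u^{s_j}}{s_j}+\sum_{|t_j|\le T}\frac{u^{\frac12+it_j}}{s_j}+O\bigl(u(\log u)^2/T\bigr),
\end{equation*}
valid for $1\le T\le\sqrt x/\log x$ (this is the Selberg trace formula cut at height $T$, as already invoked in earlier sections of the paper for the $7/10$-type PGT). Differentiating the exceptional-zero piece gives $\sum_{\frac12<s_j\le 1}u^{s_j-1}du$, and the substitution $v=u^{s_j-s}$ turns $\int_2^x u^{s_j-s-1}/\log u\,du$ into $\Li(x^{s_j-s})+O(1)$, which is exactly the advertised $\Li$-sum. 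For the critical-line piece, I apply integration by parts to $\int_2^x d\bigl(\sum_{|t_j|\le T}u^{\frac12+it_j}/s_j\bigr)/(u^s\log u)$; the $u=x$ boundary term produces exactly $(x^{1/2-s}/\log x)\sum_{|t_j|\le T}x^{it_j}/s_j$, and the leftover integral is $\int_2^x \sum_{|t_j|\le T}(u^{\frac12+it_j}/s_j)\cdot(s\log u+1)/(u^{s+1}(\log u)^2)du$. Finally, the $O(u(\log u)^2/T)$ remainder is handled by one integration by parts, producing boundary and integral contributions each of size $O(x^{1-\sigma}\log x/T)$, matching the stated error.

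The comparison between $\log\zeta_{\Gamma,x}(s,\rho)$ and $\log\zeta_\Gamma(s,\rho)$ (the latter taken in the sense of meromorphic continuation) introduces the sign ambiguity $\varepsilon_\Gamma(s,\rho)=\pm 1$ exactly as in K.~Conrad's setup; invoking Lemma~\ref{Conrad} supplies this factor once we have matched the main terms above. The main obstacle will be the leftover integral from the critical-line piece: the trivial bound is $O(|s|\sum_{|t_j|\le T}1/|s_j|)=O(|s|T)$ (using $\sum_{|t_j|\le T}1/|s_j|\ll T$ from Weyl's law), and this is too weak near $T\asymp\sqrt x/\log x$. To push through, I would integrate by parts once more in $u$ against the oscillatory factor $u^{i(t_j-t)}$, trading a factor $u/(1/2-\sigma+i(t_j-t))$ and then summing in $t_j$ using Weyl's law; the resulting bound is $O(x^{1/2-\sigma}\log T)$, well inside the error $O(x^{1-\sigma}\log x/T)$ for the allowed range of $T$. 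Exponentiating the resulting identity for $\log\zeta_{\Gamma,x}(s,\rho)-\log\zeta_\Gamma(s,\rho)$ yields the claimed product formula.
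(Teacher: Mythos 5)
Your proof reaches the right endpoint and the calculations I can check (boundary terms, the $O(x^{1-\sigma}\log x/T)$ remainder, the two integration-by-parts steps on the critical-line piece, the use of Weyl's law) are all consistent with the claimed error, but you take a genuinely different route from the paper. The paper does not plug the truncated explicit formula for $\Psi_{\Gamma}(u,\rho)$ directly into the Stieltjes integral. Instead it first defines the \emph{weighted} psi function $\Psi_{\Gamma}^{\mathrm{w}}(s,x,\rho)=\sum_{N(p)^k\le x}\tr(\rho(p)^k)\log N(p)/N(p)^{ks}$, establishes a separate explicit formula for it (Lemma~\ref{weighted}) by running Perron's formula twice (once with the pole shifted to $z=s$ and once with $z=0$, then subtracting) and shifting the contour, and only then performs summation by parts against $\mathrm{d}(1/\log t)$. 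This intermediate lemma has two payoffs you obtain by hand: the residue at $z=s$ produces $-\zeta_{\Gamma}'/\zeta_{\Gamma}(s,\rho)$, which cleanly exposes the constant that Lemma~\ref{Conrad} later identifies; and the contour calculus delivers the $\Li$-sums pre-equipped with the factor $(1-s/s_j)$, which converts $1/(s_j-s)$ into the $1/s_j$ of the statement without the extra integration by parts you perform. You recover the same $1/s_j$ weight by choosing to keep $u^{1/2+it_j}/s_j$ undifferentiated and reading the $u=x$ boundary term, then controlling the leftover oscillatory integral; that works, and is arguably more elementary (no second Perron integral, no contour shift), at the cost of the additional argument to beat down the naive $O(|s|T)$ bound. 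One point worth flagging explicitly in your write-up, which the paper also passes over: when you apply the truncated explicit formula for $\Psi_{\Gamma}(u,\rho)$ with a $T$ fixed throughout the integration range $u\in[2,x]$, the stated hypothesis $T\le\sqrt{u}/\log u$ of Lemma~\ref{slight} fails for $u$ bounded away from $x$; you should either justify the formula (with a possibly weaker but still summable error) in the extended range, or split the range and bound the contribution of small $u$ trivially.
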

Needless to say $\zeta_{\Gamma}(s, \rho)$ with $\rho$ nontrivial converges conditionally even if it converges, so we must treat it carefully. Notice that in view of the explicit formula for $\Psi_{\Gamma}(x)$, the term $\Li(\Theta_{\Gamma}(x)^{1-s})$ in Theorem~\ref{EulerProdDivergence} involves the sum over the eigenvalues $t_{j}$ with $\Re s_{j} = 1/2$. As things turns out later, the \textit{ultimate form} of Theorem~\ref{EulerProdDivergence} accords with that of Theorem~\ref{EulerProdConvergence}.


In conclusion, calculating the behavior of $\zeta_{\Gamma, x}(s, \rho)$ is relevant to the prime geodesic theorem and investigation of the location of the law-energy spectrum (still shrouded in mystery) from a statistical point of view.

\section{Brun-Titchmarsh type inequalities}\label{Iwaniec}
We are going to develop inequalities in short intervals related to counting prime geodesics, which will provide natural orientation toward deduction of the behavior of the partial Euler products $\zeta_{\Gamma, x}(s, \rho)$ for typical arithmetic groups. Those inequalities were brought up in Iwaniec's seminal work~\cite{Iwaniec1984} and he reached the following consequence (see also~\cite{ArakawaKoyamaNakasuji2002} and~\cite{Hashimoto2007}): if $\sqrt{x}(\log x)^{2} \leqslant y \leqslant x$, we have
\begin{equation}\label{Brun-Titchmarsh}
\pi_{\Gamma}(x+y)-\pi_{\Gamma}(x) \ll y,
\end{equation}
where $\Gamma = SL(2, \Z)$ and $\pi_{\Gamma}(x)$ is the pseudo-prime counting function, namely
\begin{equation*}
\pi_{\Gamma}(x) = \# \{\text{$p$ a primitive hyperbolic conjugacy class in $\Gamma$}: N(p) \leqslant x \}.
\end{equation*}
By partial summation one easily passes between the asymptotic results for $\pi_{\Gamma}(x)$, $\Theta_{\Gamma}(x)$ and $\Psi_{\Gamma}(x)$. Traditionally such estimates in short intervals are called Brun-Titchmarsh type inequalities. For $\Gamma = SL(2, \Z)$, Iwaniec~\cite{Iwaniec1984} conjectured that on the right hand side of~\eqref{Brun-Titchmarsh} $y$ can be replaced by $y/\log x$ if $\sqrt{x}(\log x)^{2} \leqslant y \leqslant x$.
Later on, the conjecture was proved by Bykovskii~\cite{Bykovskii1997} in the range $\sqrt{x} \exp(c\sqrt{\log x \log \log x}) \leqslant y \leqslant x$ with some constant $c > 0$. Recently Soundararajan and Young~\cite[Theorem 1.2]{SoundararajanYoung2013} made the following statement: if we assume the GRH for quadratic Dirichlet $L$-functions we have
\begin{equation}\label{GRH}
\pi_{\Gamma}(x+y)-\pi_{\Gamma}(x) \sim \frac{y}{\log x}, \qquad \text{where} \qquad \Gamma = SL(2, \Z)
\end{equation}
provided $\sqrt{x}(\log x)^{2+\varepsilon} \leqslant y \leqslant x$. The explicit formula for $\Psi_{\Gamma}(x)$ is predominantly proved by means of Brun-Titchmarsh type inequalities (see Lemma~\ref{slight}), and moreover ways to cope with the behavior of $\zeta_{\Gamma, x}(s, \rho)$ would go through establishing such estimates.

Incidentally the above formula is reminiscent of the case of the rational primes. The corresponding estimate is proved by using either Brun's or Selberg's sieve (see, for example,~\cite{FriedlanderIwaniec2010} and~\cite{Motohashi1983}). To be precise, Huxley~\cite{Huxley1972} has proved that the number of primes up to $x$, normally written as $\pi(x)$, admits the asymptotic formula
\begin{equation*}
\pi(x+y)-\pi(x) = \sum_{x \leqslant p \leqslant x+y} 1 = \frac{y}{\log x}+O_{\varepsilon} \left(\frac{y}{(\log x)^{2}} \right)
\end{equation*}
for $x^{\frac{1}{2}+\frac{1}{12}+\varepsilon} \leqslant y \leqslant x$. Clearly this range for $y$ falls short of being optimal.

Before stating the result we focus on a fascinating number-theoretic interpretation of numbers $N(p)$ for $SL(2, \Z)$ in terms of indefinite primitive integral quadratic forms $q(x, y) = ax^{2}+bxy+cy^{2}$ (``primitive'' means that $(a, b, c) = 1$) whose discriminants satisfy $d = b^{2}-4ac > 0$ (see~\cite{Sarnak1982}). The automorphs of such form are given by $\pm P(t, u)$ where
\begin{equation*}
P(t, u) = \begin{pmatrix} \dfrac{t-bu}{2} && -cu \\ au && \dfrac{t+bu}{2} \end{pmatrix}
\end{equation*}
with $(t, u)$ being a solution of Pell's equation $t^{2}-du^{2} = 4$. The group of automorphs of $q(x, y)$ is infinite and cyclic with a generator $P(t_{d}, u_{d})$, where $t_{d}, \, u_{d} > 0$ denotes the fundamental solution of the equation $t^{2}-du^{2} = 4$. For non-zero $u$, $P(t, u)$ is a hyperbolic element in $SL(2, \Z)$ with norm $(t+\sqrt{d}u)^{2}/4$ and trace $t$. So $P(t_{d}, u_{d})$ is a primitive hyperbolic matrix with norm $\varepsilon_{d}^{2}$ and trace $t_{d}$ where
\begin{equation*}
\varepsilon_{d} = \frac{t_{0}+\sqrt{d}u_{0}}{2}.
\end{equation*}
Sarnak's bijection $q(x, y) \mapsto P(t, u)$ sends primitive integral quadratic forms to hyperbolic conjugacy classes of $SL(2, \Z)$. More precisely it indicates that the norms of primitive classes are $\varepsilon_{d}^{2}$ with multiplicity $h(d)$, the class number, where $d \in \mathcal{D} = \{d > 0: d \equiv 0, 1 \pmod 4, \, \text{$d$ not a square} \}$ is the set of positive ring discriminants (see~\cite[Corollary~1.5]{Sarnak1982}). Generally for any typical arithmetic group some modification is required as well (for more information see~\cite{Hashimoto2007,Sarnak1982}).


Typical arithmetic groups are of interest in the sense that the Selberg zeta functions of $\Gamma$ are characterized by ``arithmetic expressions.''
\begin{lemma}\label{ArithmeticExpression}
For a typical arithmetic group $\Gamma$ defined by~\eqref{Gamma} we have
\begin{equation*}
\frac{Z_{\Gamma}^{\prime}}{Z_{\Gamma}}(s)
 = \sum_{d \in \mathcal{D}} \sum_{k \geqslant 1} 
\delta_{\Gamma}(d, k) h(d) \frac{2 \log \varepsilon_{d}}{1-\varepsilon_{d}^{-2k}} \varepsilon_{d}^{-2ks}
\end{equation*}
for certain positive integer coefficients $\delta_{\Gamma}(d, k)$.
\end{lemma}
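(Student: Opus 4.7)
The plan is to combine logarithmic differentiation of the Euler product defining $Z_\Gamma(s)$ with the arithmetic parametrization of primitive hyperbolic conjugacy classes: Sarnak's bijection for $\Gamma = SL(2,\Z)$ as recalled in the Introduction, and its refinement for congruence subgroups or quaternion groups $\Gamma_D$ due to Hashimoto and Sarnak. Starting from
\[
\log Z_\Gamma(s) = \sum_p \sum_{n=0}^{\infty} \log(1 - N(p)^{-s-n}),
\]
I expand $\log(1-x) = -\sum_{k\geq 1} x^k/k$ and sum the resulting geometric series in $n$ to obtain $\log Z_\Gamma(s) = -\sum_p \sum_{k\geq 1} N(p)^{-ks}/(k(1-N(p)^{-k}))$. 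Differentiating in $s$ then gives the standard identity
\[
\frac{Z_\Gamma^{\prime}}{Z_\Gamma}(s) = \sum_p \sum_{k=1}^{\infty} \frac{\log N(p)}{1-N(p)^{-k}}\, N(p)^{-ks}
\]
for $\Re s > 1$, where the outer sum runs over primitive hyperbolic conjugacy classes of $\Gamma$.

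For $\Gamma = SL(2,\Z)$ the lemma now follows by direct substitution. Sarnak's bijection parametrizes primitive hyperbolic conjugacy classes by pairs $(d,[q])$ with $d \in \mathcal{D}$ and $[q]$ a class of primitive integral indefinite binary quadratic form of discriminant $d$; there are $h(d)$ such classes, and the associated norm is $N(p) = \varepsilon_d^{2}$, so $\log N(p) = 2\log\varepsilon_d$. Plugging this in yields
\[
\frac{Z_\Gamma^{\prime}}{Z_\Gamma}(s) = \sum_{d\in\mathcal{D}} h(d) \sum_{k\geq 1} \frac{2\log\varepsilon_d}{1-\varepsilon_d^{-2k}}\,\varepsilon_d^{-2ks},
\]
which is the claim with $\delta_{SL(2,\Z)}(d,k) = 1$.

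For the remaining typical arithmetic groups---a congruence subgroup $\Gamma^\prime \subset SL(2,\Z)$ or a quaternion group $\Gamma_D$---I would use the corresponding refinement of Sarnak's parametrization (see Hashimoto and Sarnak), in which primitive $\Gamma$-classes again sit over ring discriminants $d \in \mathcal{D}$ with norm of the form $\varepsilon_d^{2k}$. I then \emph{define} $\delta_\Gamma(d,k)$ to be the number of primitive $\Gamma$-conjugacy classes of norm $\varepsilon_d^{2k}$ lying over the $SL(2,\Z)$-class attached to $d$ (for $\Gamma_D$, the analogous quantity is the number of conjugacy classes of optimal embeddings of the relevant real quadratic order into the Eichler order, in the sense of Eichler's theory). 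Positivity and integrality are then automatic from the counting interpretation, and substitution into the logarithmic derivative formula above delivers the lemma. The main obstacle is precisely this third step: one must track how a primitive $SL(2,\Z)$-class may split, and how non-primitive $SL(2,\Z)$-elements may acquire primitivity upon passage to a congruence subgroup, and one must translate units of quaternion orders into real quadratic data for $\Gamma_D$; for $SL(2,\Z)$ itself nothing beyond Sarnak's bijection is needed.
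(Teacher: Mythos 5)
Your logarithmic-derivative identity
\[
\frac{Z_\Gamma^{\prime}}{Z_\Gamma}(s) = \sum_p \sum_{k\geq 1} \frac{\log N(p)}{1-N(p)^{-k}}\,N(p)^{-ks}
\]
is correct, and the $\Gamma = SL(2,\Z)$ case then follows cleanly from Sarnak's bijection with $\delta_{SL(2,\Z)}(d,k)=1$, as you say. But for the two remaining families -- congruence subgroups and the quaternion groups $\Gamma_D$ -- you stop at precisely the point that carries all the content of the lemma, and you even label it ``the main obstacle.'' Flagging the difficulty is not the same as resolving it, so as written this is not a proof of the statement.

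There is also a concrete problem with your proposed \emph{definition} of $\delta_\Gamma(d,k)$ as ``the number of primitive $\Gamma$-conjugacy classes of norm $\varepsilon_d^{2k}$ lying over the $SL(2,\Z)$-class attached to $d$.'' In the sum above the exponent $\varepsilon_d^{-2ks}$ is produced by \emph{every} pair $(p,m)$ with $N(p)^m = \varepsilon_d^{2k}$; a primitive $\Gamma$-class $p$ may have $N(p)=\varepsilon_d^{2j}$ with $j>1$ (a power of an $SL(2,\Z)$-primitive class becoming $\Gamma$-primitive), in which case it contributes to $\varepsilon_d^{-2ks}$ for all $k$ divisible by $j$, with a weight involving $j$ rather than $1$. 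So the coefficient of $\varepsilon_d^{-2ks}$ is a divisor sum over $j\mid k$, not a simple count of classes of norm $\varepsilon_d^{2k}$, and it is not obvious from this description that the result is $h(d)$ times a positive integer. That bookkeeping is exactly what the missing step must supply.

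The paper sidesteps this combinatorics entirely. For $\Gamma_D$ it simply cites Arakawa--Koyama--Nakasuji, where the arithmetic expression is established directly. For a congruence subgroup $\Gamma \subset SL(2,\Z)$ it invokes the Venkov--Zograf functoriality
\[
Z_\Gamma(s) = Z_{SL(2,\Z)}\bigl(s, \mathrm{Ind}_\Gamma^{SL(2,\Z)}\1\bigr),
\]
which converts the problem into computing $\tr\bigl((\mathrm{Ind}_\Gamma^{SL(2,\Z)}\1)(\gamma^k)\bigr)$ over the \emph{$SL(2,\Z)$-primitive} classes $\gamma$ already parametrized by $d\in\mathcal{D}$; the coefficients $\delta_\Gamma(d,k)$ then come out as these traces (nonnegative integers, being fixed-point counts of a permutation representation), and the remaining arithmetic is handled as in Hashimoto's Section 2.2. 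Your approach is in the same spirit but works ``from below'' (parametrizing $\Gamma$-classes directly), whereas the paper works ``from above'' (keeping the $SL(2,\Z)$-parametrization and twisting by the induced representation), which makes the divisor-sum structure in $k$ disappear into the trace. To make your route rigorous you would essentially have to redo the Venkov--Zograf/Hashimoto bookkeeping by hand.
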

\begin{proof}
For $\Gamma_{D}$ arising from a quaternion algebra, this expression is proved in~\cite{ArakawaKoyamaNakasuji2002}. So we concentrate on the case of congruence subgroups in $SL(2, \Z)$. The proof uses the Venkov-Zograf theorem~\cite{VenkovZograf1983}. It says that if $\Gamma_{1}$ is a subgroup of $\Gamma$ with a finite index in $\Gamma$ and $\rho_{1}$ is a finite-dimensional unitary representation of $\Gamma_{1}$, then the functoriality
\begin{equation}\label{functoriality}
Z_{\Gamma_{1}}(s, \rho_{1}) = Z_{\Gamma}(s, \mathrm{Ind}_{\Gamma_{1}}^{\Gamma} \, \rho_{1}).
\end{equation}
holds; specially one has
\begin{equation*}
Z_{\Gamma}(s) = Z_{SL(2, \Z)}(s, \mathrm{Ind}_{\Gamma}^{SL(2, \Z)} \, \1).
\end{equation*}
Further, we can use $\tr ((\mathrm{Ind}_{\Gamma}^{SL(2, \Z)} \, \1)(\gamma))$ for each congruence subgroup $\Gamma$. A similar discussion to Section 2.2 of~\cite{Hashimoto2007} leads to the lemma.
\end{proof}
Of course, there are expressions corresponding to $\zeta_{\Gamma}^{\prime}/\zeta_{\Gamma}(s)$.

We can show that Bykovskii's excellent achievement also holds for typical arithmetic groups.
\begin{theorem}\label{Bykovskii}
Let $\Gamma$ be a typical arithmetic group. In the range $x^{\frac{1}{2}+\varepsilon} \leqslant y \leqslant x$ we have
\begin{equation*}
\pi_{\Gamma}(x+y)-\pi_{\Gamma}(x) \ll \frac{y}{\log x} \qquad \text{and} \qquad 
\Psi_{\Gamma}(x+y)-\Psi_{\Gamma}(x) \ll y.
\end{equation*}
\end{theorem}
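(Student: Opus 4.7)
The strategy is to reduce the short-interval counting in $\Gamma$ to a weighted short-interval sum of class numbers of real quadratic orders, and then invoke Bykovskii's estimate~\cite{Bykovskii1997} as a black box.

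First, combining the arithmetic expression of Lemma~\ref{ArithmeticExpression} with the Dirichlet expansion
\begin{equation*}
\frac{Z_{\Gamma}^{\prime}}{Z_{\Gamma}}(s) = \sum_{p} \sum_{k \geqslant 1} \frac{\log N(p)}{1 - N(p)^{-k}} N(p)^{-sk}
\end{equation*}
obtained by logarithmically differentiating the definition of $Z_{\Gamma}$ and expanding the resulting geometric series, the uniqueness of Dirichlet series lets me read off the $k = 1$ layer: the primitive hyperbolic norms of $\Gamma$ are precisely $\varepsilon_{d}^{2}$, $d \in \mathcal{D}$ (the map $d \mapsto \varepsilon_{d}^{2}$ being injective on $\mathcal{D}$), each carrying multiplicity $\delta_{\Gamma}(d, 1) h(d)$. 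Consequently,
\begin{equation*}
\pi_{\Gamma}(x+y) - \pi_{\Gamma}(x) = \sum_{\substack{d \in \mathcal{D} \\ x < \varepsilon_{d}^{2} \leqslant x+y}} \delta_{\Gamma}(d, 1) h(d).
\end{equation*}

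Next I would verify that $\delta_{\Gamma}(d, 1)$ is bounded uniformly in $d$. For a congruence subgroup of $SL(2, \Z)$, this follows from the Venkov--Zograf functoriality~\eqref{functoriality}: the induced representation $\mathrm{Ind}_{\Gamma}^{SL(2, \Z)} \1$ has dimension $[SL(2, \Z) : \Gamma]$, so each primitive hyperbolic class in $SL(2, \Z)$ contributes at most $[SL(2, \Z) : \Gamma]$ primitive classes in $\Gamma$. For $\Gamma_{D}$, the weight counts $R(1)$-equivalence classes of optimal embeddings of the real quadratic order of discriminant $d$ into a maximal order $R$, which by Eichler's theory of optimal embeddings is $O(1)$ uniformly in $d$. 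With this uniform bound the first inequality reduces to
\begin{equation*}
\sum_{\substack{d \in \mathcal{D} \\ x < \varepsilon_{d}^{2} \leqslant x+y}} h(d) \ll \frac{y}{\log x},
\end{equation*}
which is exactly Bykovskii's theorem~\cite{Bykovskii1997}; his range $\sqrt{x} \exp(c\sqrt{\log x \log \log x}) \leqslant y \leqslant x$ contains ours $x^{1/2+\varepsilon} \leqslant y \leqslant x$. The second inequality then follows by partial summation: the $k = 1$ contribution to $\Psi_{\Gamma}(x+y) - \Psi_{\Gamma}(x)$ is at most $\log(x+y) \cdot (\pi_{\Gamma}(x+y) - \pi_{\Gamma}(x)) \ll y$, while the $k \geqslant 2$ contribution is $\ll \pi_{\Gamma}(\sqrt{x+y}) \log(x+y) \ll \sqrt{x} \log x = o(y)$ throughout the range.

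The main obstacle is the reduction step for $\Gamma_{D}$: the parametrization of primitive hyperbolic classes by pairs (real quadratic order, optimal embedding) and the uniform bound on $\delta_{\Gamma_{D}}(d, 1)$ rest on Eichler's optimal-embedding theory, which is less transparent than Sarnak's bijection in the $SL(2, \Z)$ case. Once the parametrization is in place, Bykovskii's analytic input---the class number formula $h(d) \log \varepsilon_{d} = \sqrt{d}\, L(1, \chi_{d})$ together with second-moment bounds for quadratic $L$-values at $s = 1$---transfers verbatim and is used as a black box, so no new $L$-function analysis is required.
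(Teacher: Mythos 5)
Your overall strategy --- bound the multiplicities $\delta_{\Gamma}(d,k)$ uniformly, reduce to a short-interval class-number sum over $\mathcal{D}$, and invoke Bykovskii as a black box --- is the same as the paper's, and your handling of $\Gamma_{D}$ and of the passage to $\Psi_{\Gamma}$ matches the paper in spirit. But the reduction from $\pi_{\Gamma}$ to the class-number sum has a genuine gap.

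You claim that ``uniqueness of Dirichlet series'' lets you read off the $k=1$ layer of Lemma~\ref{ArithmeticExpression}, yielding the exact identity
$\pi_{\Gamma}(x+y)-\pi_{\Gamma}(x)=\sum_{x<\varepsilon_d^2\leqslant x+y}\delta_{\Gamma}(d,1)h(d)$.
This does not follow. The map $(d,k)\mapsto\varepsilon_d^{2k}$ is not injective: for instance $\varepsilon_8=3+2\sqrt2$ and $\varepsilon_{72}=17+12\sqrt2$ satisfy $\varepsilon_{72}=\varepsilon_8^2$, so the pairs $(8,2)$ and $(72,1)$ index the same value $\varepsilon_8^4=\varepsilon_{72}^2$. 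Consequently, comparing Dirichlet coefficients only pins down the \emph{combined} quantity $\sum_{(d,k):\,\varepsilon_d^{2k}=N}k^{-1}\delta_{\Gamma}(d,k)h(d)$ at each $N$, not the $k=1$ term alone. Independently, on the geodesic side you silently assume every primitive hyperbolic class of a proper congruence subgroup $\Gamma$ has norm of the form $\varepsilon_d^2$; but if a primitive $\gamma_0\in SL(2,\Z)$ satisfies $\gamma_0\notin\Gamma$ while $\gamma_0^m\in\Gamma$, then $\gamma_0^m$ is primitive in $\Gamma$ with norm $\varepsilon_{d_0}^{2m}$, and that this is always some $\varepsilon_d^2$ is a nontrivial claim about fundamental units along chains of orders that you do not prove.

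The paper sidesteps both issues by never separating the $k$-layers: it works with $\Pi_{\Gamma}(x)=\sum_{N(p)^k\leqslant x}k^{-1}$, for which Dirichlet-coefficient matching gives directly $\Pi_{\Gamma}(x)=\sum_{\varepsilon_d^{2k}\leqslant x}k^{-1}\delta_{\Gamma}(d,k)h(d)$, applies the uniform bound on $\delta_{\Gamma}(d,k)$ for \emph{all} $k$ to obtain $\Pi_{\Gamma}(x+y)-\Pi_{\Gamma}(x)\ll\Pi_{SL(2,\Z)}(x+y)-\Pi_{SL(2,\Z)}(x)$, and then converts between $\Pi$ and $\pi$ by the standard transformation formulas, the $k\geqslant2$ correction being $O(\sqrt{x}\log x)=o(y/\log x)$ in the stated range. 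Recasting your argument at the level of $\Pi_{\Gamma}$, or simply using the inequality $\pi_{\Gamma}\leqslant\Pi_{\Gamma}$ in place of your asserted equality, closes the gap.
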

Bykovskii gave a remark that this theorem is available in the plausible range $\sqrt{x} \exp(c \sqrt{\log x \log \log x}) \leqslant y \leqslant x$ for some constant $c > 0$ (see Remark at the bottom of his paper).
\begin{proof}
Letting
\begin{equation*}
\Pi_{\Gamma}(x) = \sum_{N(p)^{k} \leqslant x} k^{-1},
\end{equation*}
which is allied to $\pi_{\Gamma}(x)$, Lemma~\ref{ArithmeticExpression} yields
\begin{equation*}
\Pi_{\Gamma}(x) = \sum_{\substack{d \in \mathcal{D} \\ \varepsilon(d)^{2k} \leqslant x}} \delta_{\Gamma}(d, k) k^{-1} h(d).
\end{equation*}
Clearly $\delta_{SL(2, \Z)}(d, k)$ turns out to be equal to 1 from the correspondence described above. For congruence subgroups, the factor $\delta_{\Gamma}(d, k)$ is bounded by the inequality $0 \leqslant \delta_{\Gamma}(d, k) \leqslant [SL(2, \Z), \Gamma]$ for any $d \in \mathcal{D}$ and $k \geqslant 1$ (see~\cite{Hashimoto2007}), while for $\Gamma_{D}$, it is the same as in~\cite{ArakawaKoyamaNakasuji2002}. Thus we have
\begin{equation*}
\Pi_{\Gamma}(x+y)-\Pi_{\Gamma}(x) \ll \Pi_{SL(2, \Z)}(x+y)-\Pi_{SL(2, \Z)}(x).
\end{equation*}
This can be rewritten in the form
\begin{equation*}
\pi_{\Gamma}(x+y)-\pi_{\Gamma}(x) \ll \pi_{SL(2, \Z)}(x+y)-\pi_{SL(2, \Z)}(x)
\end{equation*}
by the transformation formulas
\begin{equation*}
\Pi_{\Gamma}(x) = \sum_{k \geqslant 1} k^{-1} \pi_{\Gamma}(x^{\frac{1}{k}}) \qquad \text{and} \qquad
\pi_{\Gamma}(x) = \Pi_{\Gamma}(x)-\sum_{k \geqslant 2} k^{-1} \Pi_{\Gamma}(x^{\frac{1}{k}}).
\end{equation*}
We then exploit Bykovskii's theorem~\cite[Theorem 1]{Bykovskii1997}, proving Theorem~\ref{Bykovskii}.
\end{proof}

We remark that for $\Gamma = SL(2, \Z)$, the class number formula tells us that
\begin{equation}\label{Kuznetsov}
\Psi_{\Gamma}(x) = \sum_{2 < t \leqslant X} \sum_{t^{2}-du^{2} = 4} h(d) \log \varepsilon_{d}^{2}
 = 2 \sum_{2 < t \leqslant X} \sum_{du^{2} = t^{2}-4} \sqrt{d} L(1, \chi_{d})
\end{equation}
with $X = \sqrt{x}+1/\sqrt{x}$ and $\chi_{d}$ being the real character of discriminant $d$. Moreover the inner sum in~\eqref{Kuznetsov} may be expressed as $\sqrt{t^{2}-4} \, \mathcal{L}(1, t^{2}-4)$ for a Dirichlet series
\begin{equation*}
\mathcal{L}(s, \delta) = \frac{\zeta(2s)}{\zeta(s)} \sum_{q \geqslant 1} \rho_{q}(\delta) q^{-s}
 = \sum_{q \geqslant 1} \lambda_{q}(d) q^{-s}
\end{equation*}
which is defined for all discriminants $\delta$ (cf.~\cite{Zagier1977}), where
\begin{equation*}
\rho_{q}(\delta) = \# \{x \tpmod{2q}: x^{2} \equiv \delta \tpmod{4q} \}
\end{equation*}
and
\begin{equation*}
\lambda_{q}(d) = \sum_{q_{1}^{2}q_{2}q_{3} = q} \mu(q_{2}) \rho_{q_{3}}(d).
\end{equation*}
From~\cite[Lemma 2.1]{SoundararajanYoung2013} we reach
\begin{equation}\label{Kuznetsov2}
\Psi_{\Gamma}(x) = 2 \sum_{2 < n \leqslant X} \sqrt{n^{2}-4} \, \mathcal{L}(1, n^{2}-4).
\end{equation}
The expression~\eqref{Kuznetsov2} is decisively featured in Bykovskii's proof for the problem of Iwaniec (see~\cite[page 142]{Iwaniec1984}~\cite{Kuznetsov1978}, and~\cite[Proposition 2.2]{SoundararajanYoung2013}). It should be pointed out that Gauss noticed the behavior
\begin{equation*}
\sum_{\substack{d \in \mathcal{D} \\ d \leqslant x}} 2h(d) \log \varepsilon_{d}
 = \frac{\pi^{2} x^{\frac{3}{2}}}{9 \, \zeta(3)}+O(x \log x),
\end{equation*}
and it concerns with a mean value of $L(1, \chi_{d})$ (cf.~\cite[p.518]{Hejhal1983}~\cite{Shintani1975,Siegel1944}).


As mentioned before, the difference $\pi_{\Gamma}(x+y)-\pi_{\Gamma}(x)$ for $\Gamma = SL(2, \Z)$ is asymptotically equal to $y/\log x$ as $x \to \infty$. It seems that the following is interesting and challenging:
\begin{problem}
Obtain an asymptotic formula for $\pi_{\Gamma}(x+y)-\pi_{\Gamma}(x)$ in $\sqrt{x}(\log x)^{2} < y < x$ for any congruence subgroup $\Gamma$.
\end{problem}

Note that the trace $\tr(p)$ must be an integer $n > 2$, whence $N(p) = ((n+\sqrt{n^{2}-4})/2)^{2} = n^{2}-2+O(n^{-1})$. Thus unlike the rational primes which have an average gap of $\log x$, the norms $N(p)$ are widely spaced with each possible norm appearing with high multiplicity $h(d)$. Notice that the estimate $\pi_{\Gamma}(x+y)-\pi_{\Gamma}(x) \sim y/\log x$ cannot always hold true for $y \leqslant \sqrt{x}$.
The Brun-Titchmarsh type inequality for $1 \leqslant y \leqslant x$ is similarly considered by omitting the congruence condition at the expense of crudity in an analogous proof to Iwaniec's~\cite[Lemma 4]{Iwaniec1984} (see also~\cite{ArakawaKoyamaNakasuji2002}). Golubeva~\cite{Golubeva1999} proved the lower and upper bounds for $\pi_{\Gamma}((x+y)^{2})-\pi_{\Gamma}(x^{2})$ (note that the function ``$\pi_{\Gamma}(X)$'' in his paper is different from the conventional notation).

From Theorem~\ref{Bykovskii} we are led to the following explicit formula with parameter $T$ to be chosen later which we can vary within $1 \leqslant T \leqslant \sqrt{x}/\log x$ (cf.~\cite{Iwaniec1984,Koyama1998}):
\begin{lemma}\label{slight}
Let $\Gamma$ be a typical arithmetic group. We then have
\begin{equation}\label{explicitformula}
\Psi_{\Gamma}(x, \rho) = \sum_{\frac{1}{2} < s_{j} \leqslant 1} \frac{x^{s_{j}}}{s_{j}}
 + \sqrt{x} \sum_{|t_{j}| \leqslant T} \frac{x^{it_{j}}}{s_{j}}+O \left(\frac{x}{T}(\log x)^{2} \right),
\end{equation}
provided $1 \leqslant T \leqslant \sqrt{x}/\log x$, where $s_{j}$ in the first sum are the exceptional zeros of $Z_{\Gamma}(s)$ with $1/2 < s_{j} \leqslant 1$ and those in the second sum are non-exceptional zeros with $\Re s_{j} = 1/2$ and $t_{j} = \Im s_{j}$. The condition $|t_{j}| \leqslant T$ means that the sum runs through the spectral parameters $\pm t_{j}$ with $t_{j} \in \R, \, t_{j} \leqslant T$ (recall our sign convention $t_{j} > 0$ for the non-exceptional eigenvalues).
\end{lemma}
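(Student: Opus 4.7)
The plan is to establish the explicit formula along the lines of Iwaniec~\cite{Iwaniec1984} and Koyama~\cite{Koyama1998}: apply the Selberg trace formula for $(\Gamma,\rho)$---in the representation-theoretic form of Venkov~\cite{Venkov1982,Venkov1990}---to a carefully chosen test function whose geometric side reconstructs a smoothed version of $\Psi_{\Gamma}(x,\rho)$ and whose spectral side is the truncated sum $\sum_{|t_{j}|\leqslant T}x^{s_{j}}/s_{j}$ up to a controllable tail; then remove the smoothing by invoking Theorem~\ref{Bykovskii}. The extension from $\rho=\1$, $\Gamma=SL(2,\Z)$ to a general typical $\Gamma$ and a general unitary $\rho$ is cosmetic, via the functoriality~\eqref{functoriality} and the trivial trace bound $|\tr(\rho(p)^{k})|\leqslant\dim\rho$.

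Concretely, I would fix $y=x/T$, which lies in the admissible range $x^{1/2+\varepsilon}\leqslant y\leqslant x$ precisely when $T\leqslant\sqrt{x}/\log x$. Let $g_{y}$ be a smooth approximation to $e^{u/2}\chi_{[0,\log x]}(u)$ with transition region of width $y/x$, and let $h_{y}(t)$ denote its Selberg transform. The trace formula identifies the geometric side with the smoothed quantity $\widetilde\Psi_{\Gamma}(x,\rho):=y^{-1}\int_{x}^{x+y}\Psi_{\Gamma}(u,\rho)\,du$, up to identity, elliptic, parabolic and (for non-cocompact $\Gamma$) Eisenstein contributions of size $O(\sqrt{x})$. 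The spectral side $\sum_{j}h_{y}(t_{j})$ then splits as follows: the finitely many exceptional eigenvalues with $t_{j}\in i(0,1/2]$ contribute $\sum_{1/2<s_{j}\leqslant 1}x^{s_{j}}/s_{j}$; the real eigenvalues with $|t_{j}|\leqslant T$ contribute $\sqrt{x}\sum_{|t_{j}|\leqslant T}x^{it_{j}}/s_{j}$ up to an in-window smoothing error $\ll(x/T)\log x$; and by repeated integration by parts in $h_{y}$, together with the Weyl law $\#\{j:|t_{j}|\leqslant U\}\ll U^{2}$, the tail $|t_{j}|>T$ contributes $O((x/T)(\log x)^{2})$. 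Finally, Theorem~\ref{Bykovskii} applied with $|\tr(\rho(p)^{k})|\leqslant\dim\rho$ gives $|\widetilde\Psi_{\Gamma}(x,\rho)-\Psi_{\Gamma}(x,\rho)|\ll y=x/T$, absorbed into the same remainder.

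The main obstacle is that a direct Perron--contour-shift argument is not available for Selberg zeta functions, because the density $\sim T$ of critical-line zeros at height $T$ obstructs the pointwise estimate $|\zeta_{\Gamma}'/\zeta_{\Gamma}(\sigma+iT,\rho)|\ll(\log T)^{2}$ that underlies the classical explicit formula for Riemann's $\zeta$. The trace-formula route sidesteps this at the cost of tracking the Selberg transform $h_{y}$; the delicate point is matching the smoothing width $y$ to the truncation height $T$ so that both the unsmoothing error (controlled by Theorem~\ref{Bykovskii}) and the spectral-tail error (controlled by the Weyl law) come in at the target level $O((x/T)(\log x)^{2})$. This is precisely why Theorem~\ref{Bykovskii}---Bykovskii's sharp strengthening of Iwaniec's inequality---is indispensable here: with only~\eqref{Brun-Titchmarsh} one would be forced to $y\gg\sqrt{x}(\log x)^{2}$, yielding the weaker remainder $O(\sqrt{x}(\log x)^{2})$ and losing the quantitative gain on which Sections~\ref{divEulerprod}--\ref{convEulerprod} depend.
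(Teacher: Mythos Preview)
Your trace-formula route with a smooth test function is a valid way to prove the lemma, but it is \emph{not} the paper's approach, and your stated reason for avoiding Perron is mistaken. The paper follows Iwaniec~\cite{Iwaniec1984} literally: one applies Perron's formula to $-(\zeta_{\Gamma}'/\zeta_{\Gamma})(z,\rho)\,x^{z}/z$, shifts the contour leftward, and collects the residues at the $s_{j}$; the required bounds for $\zeta_{\Gamma}'/\zeta_{\Gamma}$ on the horizontal and vertical segments come from the functional equation together with Phragm\'en--Lindel\"of convexity, which is precisely why the paper singles out Phragm\'en--Lindel\"of. The density of critical-line zeros does make this more delicate than for Riemann's $\zeta$, but Iwaniec showed the contour estimates go through, and the very same segment decomposition $\mathscr{L}_{1}^{\pm},\mathscr{L}_{2}^{\pm},\mathscr{L}_{3}$ is reused verbatim in the proof of Lemma~\ref{weighted}; your blanket claim that the Perron--contour argument ``is not available'' for Selberg zeta functions is simply false. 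In both approaches Theorem~\ref{Bykovskii} enters at the same point---controlling the Perron truncation error in the paper's argument, or the unsmoothing error $|\widetilde\Psi_{\Gamma}-\Psi_{\Gamma}|$ in yours---and the generalisation to $\rho$ via $|\tr(\rho(p)^{k})|\leqslant\dim\rho$ is common to both. What your route buys is that it bypasses convexity estimates for $\zeta_{\Gamma}(s,\rho)$ altogether; what the paper's route buys is economy, since the Perron machinery is needed anyway in Section~\ref{convEulerprod}. One minor slip: $y=x/T\geqslant x^{1/2+\varepsilon}$ is equivalent to $T\leqslant x^{1/2-\varepsilon}$, not to $T\leqslant\sqrt{x}/\log x$; at the endpoint $T=\sqrt{x}/\log x$ one has $y=\sqrt{x}\log x$, which lies below $x^{1/2+\varepsilon}$ and requires the sharper range in Bykovskii's remark following Theorem~\ref{Bykovskii}.
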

The proof exploits Iwaniec's method in~\cite{Iwaniec1984}, Theorem~\ref{Bykovskii} and standard results for $\zeta_{\Gamma}(s, \rho)$ from the Phragm\'{e}n-Lindel\"{o}f convexity principle. Generalization to the explicit formula with $\rho$ is ensured by the basic inequality $|\tr(\rho(p))| \leqslant \dim \rho$. For $\rho = \1$, Lemma~\ref{slight} is a slight improvement of~\cite[Lemma~1]{Iwaniec1984}.

\section{Behavior of Euler products: Unearthing the vision of Ramanujan}\label{divEulerprod}
This section is devoted to an investigation of the behavior of the partial Euler product $\zeta_{\Gamma, x}(s)$ with $\Gamma$ being a typical arithmetic group. We employ the sophisticated machinery of Ramanujan to see that $\zeta_{\Gamma, x}(s)$ is more precisely approximated by means of $\Li(\Theta_{\Gamma}(x)^{1-s})$ rather than $\Li(x^{1-s})$. Some intimate descriptions on his method are written in~\cite{AndrewsBerndt2013} and~\cite{Robin1991}. Before embarking our Euler product asymptotics, we note that in his method, there is ambiguity on derivation of a so-called ``constant term.'' Note that this constant term is justified by the analogy of the theorem of Conrad for $\rho = \1$ (Lemma~\ref{Conrad}, Section~\ref{convEulerprod}). One reason for restricting $\Gamma$ is that the distribution of the point spectrum on $\Gamma \backslash \mathbb{H}$ with $\Gamma$ cofinite remains elusive. The following reveals the divergent behavior of the partial Euler products in the critical strip:
\begin{theorem}\label{divergent}
Let $\Gamma$ be a typical arithmetic group. We then have
\begin{description}
\item[Case I. $\Re s < 1/2$] 
\begin{multline*}
\log \zeta_{\Gamma, x}(s) = \Li(\Theta_{\Gamma}(x)^{1-s})+\frac{1}{2} \Li(\Theta_{\Gamma}(x)^{1-2s})+\dots
 + \frac{1}{n} \Li(\Theta_{\Gamma}(x)^{1-ns})\\
 + \sum_{\frac{1}{2} < s_{j} < 1} \sum_{1 \leqslant \nu \leqslant n} \frac{1}{\nu} \Li(x^{s_{j}-\nu s})
 - \frac{1}{\log x} \sum_{\frac{1}{2} < s_{j} < 1} \sum_{1 \leqslant \nu \leqslant n} \frac{x^{s_{j}-\nu s}}{\nu s_{j}}
 + O(x^{\frac{1}{2}-\sigma}(\log x)^{2})
\end{multline*}
with $n = \left[1+1/(2|\sigma|) \right]$;\\
\item[Case I\hspace{-0.1mm}I. $\Re s = 1/2$] 
\begin{equation*}
\zeta_{\Gamma, x}(s) = \exp \biggg(\Li(\Theta_{\Gamma}(x)^{1-s})+\sum_{\frac{1}{2} < s_{j} < 1} \Li(x^{s_{j}-s})
 - \frac{x^{-s}}{\log x} \sum_{\frac{1}{2} < s_{j} < 1} \frac{x^{s_{j}}}{s_{j}}+O((\log x)^{2}) \biggg);
\end{equation*}
\item[Case I\hspace{-0.1mm}I\hspace{-0.1mm}I. $\Re s > 1/2$] 
\begin{equation*}
\zeta_{\Gamma, x}(s)
 = \varepsilon_{\Gamma}(s) \, \zeta_{\Gamma}(s) 
\exp \biggg(\Li(\Theta_{\Gamma}(x)^{1-s})+\sum_{\frac{1}{2} < s_{j} < 1} \Li(x^{s_{j}-s})
 - \frac{x^{-s}}{\log x} \sum_{\frac{1}{2} < s_{j} < 1} \frac{x^{s_{j}}}{s_{j}}+O(x^{\frac{1}{2}-\sigma}(\log x)^{2}) \biggg).
\end{equation*}
\end{description}
\end{theorem}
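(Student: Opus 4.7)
The plan is to separate $\log \zeta_{\Gamma,x}(s)$ into its $k=1$ Dirichlet sum and its higher-power tails, treat the former by Ramanujan's Stieltjes-integral maneuver, and absorb the latter into the analytic continuation $\zeta_{\Gamma}(s)$. Write
\begin{equation*}
\log \zeta_{\Gamma,x}(s) = \sum_{N(p) \leqslant x} N(p)^{-s} + \sum_{k \geqslant 2} \frac{1}{k} \sum_{N(p) \leqslant x} N(p)^{-ks}.
\end{equation*}
For each $k \geqslant 2$ and $\sigma > 1/2$, the full sum $\sum_{p} N(p)^{-ks}$ converges absolutely; partial summation against $\pi_{\Gamma}(u) \ll u/\log u$ shows the tail past $x$ is $O(x^{1-k\sigma}/\log x)$. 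Since $x^{1-2\sigma} \leqslant x^{1/2-\sigma}$ in our range, summing over $k \geqslant 2$ leaves a total error of $O(x^{1/2-\sigma})$. The resulting convergent part, together with an appropriate constant extracted from the $k=1$ integration-by-parts boundary terms, will be identified with $\log(\varepsilon_{\Gamma}(s)\,\zeta_{\Gamma}(s))$ via an analogue of Conrad's lemma (the $\pm 1$ recording the conditional-convergence sign, as in Lemma~\ref{Conrad}).

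For the $k=1$ piece I would write $\sum_{N(p)\leqslant x} N(p)^{-s} = \int_{2^-}^{x} u^{-s}(\log u)^{-1}\,d\Theta_{\Gamma}(u)$ and integrate by parts, producing a boundary term $\Theta_{\Gamma}(x)/(x^s \log x)$ and a smooth integral against $\Theta_{\Gamma}(u)$. Substituting the explicit formula for $\Psi_{\Gamma}(u)$ from Lemma~\ref{slight} and converting to $\Theta_{\Gamma}$ (the difference being $O(\sqrt{u})$ from higher prime powers), the "main" contribution $u$ processes to a pure logarithmic integral: combining the boundary term $u/(x^s \log x)|_{u=x}$ with $s\int_2^x u^{-s}/\log u\, du$ and $\int_2^x u^{-s}/(\log u)^2\, du$ yields $\Li(x^{1-s}) + O(1)$. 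The crucial Ramanujan twist is to recognize
\begin{equation*}
\Li(\Theta_{\Gamma}(x)^{1-s}) - \Li(x^{1-s}) = \int_{x}^{\Theta_{\Gamma}(x)} \frac{dv}{v^{s}\log v} = \frac{\Theta_{\Gamma}(x) - x}{x^{s}\log x} + O(x^{-2\sigma}\cdot (\Theta_\Gamma(x)-x)^2/\log x),
\end{equation*}
so the difference between the $\Li(x^{1-s})$ that naturally emerges and the desired $\Li(\Theta_{\Gamma}(x)^{1-s})$ is exactly the boundary contribution generated by the fluctuation $\Theta_{\Gamma}(x) - x$; this is where Ramanujan's substitution effectively happens.

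For each exceptional zero $s_j \in (1/2,1)$, the summand $u^{s_j}/s_j$ inside $\Theta_{\Gamma}(u)$ feeds through the same integration-by-parts engine: the main antiderivative produces $\Li(x^{s_j - s})$ (upon the same $v = u^{s_j}$-type change of variables used above for the $u$ term), and the boundary piece at $u=x$ supplies precisely the $-(x^{-s}/\log x)\cdot x^{s_j}/s_j$ correction appearing in the statement. The oscillatory part $\sqrt{u}\sum_{|t_j|\leqslant T}u^{it_j}/s_j$ in Lemma~\ref{slight} is integrated against $u^{-s-1}/\log u$; optimising $T$ (permissibly in $[1,\sqrt{u}/\log u]$, and taking $T = \sqrt{u}/\log u$) bounds this contribution by $O(x^{1/2-\sigma}(\log x)^{2})$, matching the error term. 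The $O(u/T(\log u)^2)$ remainder contributes a similar size upon integration.

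The step I expect to be the main obstacle is the bookkeeping that isolates $\varepsilon_{\Gamma}(s)\,\zeta_{\Gamma}(s)$ as the "constant of integration": because $\zeta_{\Gamma}(s) = Z_{\Gamma}(s+1)/Z_{\Gamma}(s)$ is only a meromorphic continuation and not an absolutely convergent product in $\Re s > 1/2$, one cannot simply identify the $x \to \infty$ limit of the $k \geqslant 2$ block together with the residual constants from the $k = 1$ integration-by-parts by a term-by-term comparison. Rather, one must appeal to the analogue of Conrad's lemma stated in Section~\ref{convEulerprod} (Lemma~\ref{Conrad}) to pin down both the value and the $\pm 1$ ambiguity $\varepsilon_{\Gamma}(s)$; the oscillatory tail being $o(1)$ along the way is what legitimises this identification. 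A secondary technical point is that the sum over exceptional $s_j$ with $s_j$ close to $1$ produces $R(u)^2/u$ corrections to the Ramanujan substitution that must be checked to lie inside $O(x^{1/2-\sigma}(\log x)^2)$, which is automatic for $s_j \leqslant 3/4$ but requires the trivial-bound exceptional spectrum for $s_j \in (3/4, 1)$ to be finite in cardinality.
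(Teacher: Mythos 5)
Your decomposition into the $k=1$ Dirichlet sum and the $k\geqslant 2$ tail, with the latter absorbed into the constant, only works on $\Re s>1/2$; the tail of $\sum_p N(p)^{-ks}$ is $O(x^{1-k\sigma}/\log x)$ and this is $\leqslant x^{1/2-\sigma}$ precisely because $\sigma>1/2$. But Theorem~\ref{divergent} also asserts Cases~I ($\Re s<1/2$) and~I\hspace{-0.1mm}I ($\Re s=1/2$), in which the partial sums $\sum_{N(p)\leqslant x}N(p)^{-ks}$ for $2\leqslant k\leqslant n$ \emph{diverge} as $x\to\infty$ and produce the additional terms $\frac{1}{\nu}\Li(\Theta_\Gamma(x)^{1-\nu s})$ and $\frac{1}{\nu}\Li(x^{s_j-\nu s})$ in the statement. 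Your proposal does not address these cases at all, and the $k=1$ versus $k\geqslant 2$ split cannot be salvaged for them by a simple patch — each $k\leqslant n$ needs its own $\Li$-expansion, so you would end up reproving the Case~I result from scratch for each $k$.

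The paper handles all three cases uniformly by a genuinely different device, which is the Ramanujan step you did not use: instead of $\Phi(t)=t^{-s}$, take $\Phi(t)=1/(t^s-1)=\sum_{\nu\geqslant 1}t^{-\nu s}$ in the partial summation. This encodes all prime powers at once, giving $\sum_{N(p)\leqslant x}\log N(p)/(N(p)^s-1)$, and the geometric expansion is what mechanically produces the string $\Li(\Theta_\Gamma(x)^{1-s})+\frac12\Li(\Theta_\Gamma(x)^{1-2s})+\dots$ with the truncation point $n=[1+1/(2|\sigma|)]$ determined by where $\Re(1-\nu s)$ turns negative. One then recovers $\log\zeta_{\Gamma,x}(s)=\sum_p\log(1-N(p)^{-s})^{-1}$ by integrating in the real variable $u=\Re s$ from $+\infty$ down to $\sigma$, since $\int_\infty^\sigma \log N(p)/(N(p)^{u+it}-1)\,du = -\log(1-N(p)^{-s})$. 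Your $k=1$ Stieltjes-integral computation is sound as far as it goes, but it only rediscovers a fragment of what the single well-chosen test function gives for free.

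A secondary point: you attribute the $-\frac{x^{-s}}{\log x}\frac{x^{s_j}}{s_j}$ term in the statement to "the boundary piece at $u=x$" of the exceptional-zero contribution. If you carry out the integration by parts carefully for $\Theta_\Gamma(u)\rightsquigarrow u^{s_j}/s_j$, the boundary term $\frac{x^{s_j-s}}{s_j\log x}$ actually cancels against a piece of the $\int \Theta_\Gamma(u)\,u^{-s-1}(\log u)^{-2}\,du$ integral, leaving just $\Li(x^{s_j-s})+\text{const}$. The $-\frac{x^{-s}}{\log x}\sum x^{s_j}/s_j$ in the theorem is not a boundary term; it is the first-order Taylor correction one introduces (and must compensate for) by choosing to record the leading $\Li$ in the variable $\Theta_\Gamma(x)^{1-s}$ rather than $x^{1-s}$. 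Your own equation $\Li(\Theta_\Gamma(x)^{1-s})-\Li(x^{1-s})=(\Theta_\Gamma(x)-x)/(x^s\log x)+O(\cdot)$ already contains this, so the bookkeeping can be made to close — but the way you phrase it misidentifies the source of the term and would lead to a sign error if taken literally.
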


\begin{proof}
We begin with considering the partial summation that if $\Phi^{\prime}(x)$ is continuous between $N(p_{1})$ and $x$, then
\begin{multline*}
\Phi(N(p_{1})) \log N(p_{1})+\Phi(N(p_{2})) \log N(p_{2})+\dots+\Phi(N(p_{r})) \log N(p_{r})\\
 = \Phi(x) \Theta_{\Gamma}(x)-\int_{N(p_{1})}^{x} \Phi^{\prime}(t) \Theta_{\Gamma}(t) \mathrm{d} t,
\end{multline*}
where $N(p_{1}) \leqslant N(p_{2}) \leqslant \dotsb \leqslant N(p_{r})$ is an ascending sequence of consecutive norms, and $N(p_{r})$ is the largest norm below $x$.
Integration by parts and Taylor's theorem give
\begin{multline*}
\Phi(x) \Theta_{\Gamma}(x)-\int_{N(p_{1})}^{x} \Phi^{\prime}(t) \Theta_{\Gamma}(t) \mathrm{d} t
 = \mathrm{const}+\int_{N(p_{1})}^{x} \Phi(t) \mathrm{d} t-(x-\Theta_{\Gamma}(x)) \Phi(x)
 + \int_{N(p_{1})}^{x} \Phi^{\prime}(t)(t-\Theta_{\Gamma}(t)) \mathrm{d} t
\end{multline*}
where ``const'' depends solely of $\Phi$ and $\Gamma$. Further, the prime geodesic theorem with exponent 7/10, which is good enough for our purpose here, leads us to
\begin{equation}\label{volcano}
\int_{N(p_{1})}^{\Theta_{\Gamma}(x)} \Phi(t) \mathrm{d} t
 = \int_{N(p_{1})}^{x} \Phi(t) \mathrm{d} t-(x-\Theta_{\Gamma}(x)) \Phi(x)
 + \frac{1}{2}(x-\Theta_{\Gamma}(x))^{2} \Phi^{\prime}(x+O(x^{\frac{7}{10}}(\log x)^{2})).
\end{equation}
Gathering together these equations, we find that
\begin{multline}
\Phi(N(p_{1})) \log N(p_{1})+\Phi(N(p_{2})) \log N(p_{2})+\dots+\Phi(N(p_{r})) \log N(p_{r})\\
 = \mathrm{const}+\int_{N(p_{1})}^{\Theta_{\Gamma}(x)} \Phi(t) \mathrm{d} t
 + \int_{N(p_{1})}^{x} \Phi^{\prime}(t)(t-\Theta_{\Gamma}(t)) \mathrm{d} t
 - \frac{1}{2}(x-\Theta_{\Gamma}(x))^{2} \Phi^{\prime}(x+O(x^{\frac{7}{10}}(\log x)^{2})).
\end{multline}

A variety of characteristic asymptotic formulas can be obtained by choosing a suitable test function $\Phi$. We now set $\Phi(x) = 1/(x^{s}-1)$ to consider the growth of the partial Euler products, whence
\begin{multline}\label{asymptotic}
\frac{\log N(p_{1})}{N(p_{1})^{s}-1}+\frac{\log N(p_{2})}{N(p_{2})^{s}-1}+\dots+\frac{\log N(p_{r})}{N(p_{r})^{s}-1}\\
 = \mathrm{const}+\int_{N(p_{1})}^{\Theta_{\Gamma}(x)} \frac{\mathrm{d} t}{t^{s}-1}
 - s \int_{N(p_{1})}^{x} \frac{t-\Theta_{\Gamma}(t)}{t^{1-s}(t^{s}-1)^{2}} \mathrm{d} t+O(x^{\frac{2}{5}-\sigma}(\log x)^{4}).
\end{multline}
In order to reduce $t-\Theta(t)$, we invoke Lemma~\ref{slight}. If we specify $T \coloneqq \sqrt{t}(\log t)^{-1}$ we get
\begin{equation*}
t-\Theta_{\Gamma}(t) = t-\Psi_{\Gamma}(t)+\Psi_{\Gamma}(\sqrt{t})+\dotsb
 = \sqrt{t}-\sum_{\frac{1}{2} < s_{j} < 1} \frac{t^{s_{j}}}{s_{j}}
 - \sum_{|t_{j}| \leqslant T} \frac{t^{s_{j}}}{s_{j}}+O(\sqrt{t}(\log t)^{3}),
\end{equation*}
where $\sqrt{t}$ can be absorbed by the error term but we leave it to see later a cancellation with another term (equation~\eqref{root}). Inasmuch as the non-exceptional zeros obey Weyl's asymptotic law $N_{\Gamma}(T) = \# \{j: |t_{j}| \leqslant T \} \ll T^{2}$, the contributions from the sums over the exceptional and non-exceptional eigenvalues are calculated as
\begin{equation*}
\sum_{\frac{1}{2} < s_{j} < 1} \frac{1}{s_{j}} \int_{N(p_{1})}^{x} \frac{t^{s_{j}} \mathrm{d} t}{t^{1-s}(t^{s}-1)^{2}}
 = \sum_{\frac{1}{2} < s_{j} < 1} \sum_{\nu \geqslant 1} \frac{\nu x^{s_{j}-\nu s}}{s_{j}(s_{j}-\nu s)}+\mathrm{const}
\end{equation*}
and
\begin{equation*}
\sum_{|t_{j}| \leqslant T} \frac{1}{s_{j}} \int_{N(p_{1})}^{x} \frac{t^{s_{j}} \mathrm{d} t}{t^{1-s}(t^{s}-1)^{2}}
 = O(x^{\frac{1}{2}-\sigma} \log x)+\mathrm{const},
\end{equation*}
respectively. A finer asymptotic expansion for $N_{\Gamma}(T)$ is available (\cite{Hejhal1983,Steil1994,Venkov1990}), and also see~\cite{Luo2001} and~\cite{PhillipsSarnak1985} for generic situation. Therefore we get
\begin{multline}\label{norms}
\frac{\log N(p_{1})}{N(p_{1})^{s}-1}+\frac{\log N(p_{2})}{N(p_{2})^{s}-1}+\dots+\frac{\log N(p_{r})}{N(p_{r})^{s}-1}\\
 = \mathrm{const}+\int_{N(p_{1})}^{\Theta_{\Gamma}(x)} \frac{\mathrm{d} t}{t^{s}-1}
 - s \int_{N(p_{1})}^{x} \frac{t^{-\frac{1}{2}+s}}{(t^{s}-1)^{2}} \mathrm{d} t
 + s \sum_{\frac{1}{2} < s_{j} < 1} \sum_{\nu \geqslant 1} \frac{\nu x^{s_{j}-\nu s}}{s_{j}(s_{j}-\nu s)}
 + O(x^{\frac{1}{2}-\sigma}(\log x)^{3}),
\end{multline}
where ``const'' depends only on $s$ and $\Gamma$. Furthermore, by a simple computation and truncating the unnecessary order we find that the sum of the two integrals and the double sum is equal to
\begin{equation}\label{Theta}
\frac{\Theta_{\Gamma}(x)^{1-s}}{1-s}+\frac{\Theta_{\Gamma}(x)^{1-2s}}{1-2s}
 + \dots+\frac{\Theta_{\Gamma}(x)^{1-ns}}{1-ns}
 - \frac{2s x^{\frac{1}{2}-s}}{1-2s}+E_{\Gamma}(s, x)+\mathrm{const}+O(x^{\frac{1}{2}-\sigma}(\log x)^{3}),
\end{equation}
where $n = \left[1+1/(2|\sigma|) \right]$ with $[x]$ being the floor function and
\begin{equation*}
E_{\Gamma}(s, x) = s \sum_{\frac{1}{2} < s_{j} < 1} \sum_{1 \leqslant \nu \leqslant n} 
\frac{\nu x^{s_{j}-\nu s}}{s_{j}(s_{j}-\nu s)}.
\end{equation*}
One thing to be aware of is that the $\Theta_{\Gamma}(x)$'s except for that in the first term in~\eqref{Theta} are replaced by $x$, assuming the RH, in Ramanujan's paper~\cite{Ramanujan1997}. Whereas, in our situation, those $\Theta_{\Gamma}(x)$'s cannot be done so, owing to the possible exceptional eigenvalues. Thus we easily get
\begin{multline}\label{theta3}
\frac{\log N(p_{1})}{N(p_{1})^{s}-1}+\frac{\log N(p_{2})}{N(p_{2})^{s}-1}+\dots+\frac{\log N(p_{r})}{N(p_{r})^{s}-1}\\
 = \mathrm{const}+\frac{\Theta_{\Gamma}(x)^{1-s}}{1-s}+\frac{\Theta_{\Gamma}(x)^{1-2s}}{1-2s}+\dots
 + \frac{\Theta_{\Gamma}(x)^{1-ns}}{1-ns}
 - \frac{2s x^{\frac{1}{2}-s}}{1-2s}+E_{\Gamma}(s, x)+O(x^{\frac{1}{2}-\sigma}(\log x)^{3}).
\end{multline}
If $\Re s > 1/2$ and the logarithmic derivative of an analogous limit formula of the form~\cite[Corollary 4.5]{Akatsuka2017} is given (see Lemma~\ref{Conrad}), we have $\mathrm{const} = -\zeta_{\Gamma}^{\prime}/\zeta_{\Gamma}(s)$.

We then replace $s = \sigma+it$ by $u+it$ and integrate~\eqref{theta3} with respect to $u$ from $\infty$ to $\sigma$. Hence we get from
\begin{equation*}
\int_{\infty}^{\sigma} E_{\Gamma}(u+it, x) \mathrm{d} u
 = -\sum_{\frac{1}{2} < s_{j} < 1} \sum_{1 \leqslant \nu \leqslant n} \frac{1}{\nu} \Li(x^{s_{j}-\nu s})
 + \frac{1}{\log x} \sum_{\frac{1}{2} < s_{j} < 1} \sum_{1 \leqslant \nu \leqslant n} \frac{x^{s_{j}-\nu s}}{\nu s_{j}}
\end{equation*}
that
\begin{multline}\label{theta2}
\log ((1-N(p_{1})^{-s})(1-N(p_{2})^{-s}) \dotsm (1-N(p_{r})^{-s}))\\
 = -\log \varepsilon_{\Gamma}(s) \, \zeta_{\Gamma}(s)
 - \Li(\Theta_{\Gamma}(x)^{1-s})-\frac{1}{2} \Li(\Theta_{\Gamma}(x)^{1-2s})-\dots
 - \frac{1}{n} \Li(\Theta_{\Gamma}(x)^{1-ns})\\+\frac{1}{2} \Li(x^{\frac{1}{2}-s})
 - \sum_{\frac{1}{2} < s_{j} < 1} \sum_{1 \leqslant \nu \leqslant n} \frac{1}{\nu} \Li(x^{s_{j}-\nu s})
 + \frac{1}{\log x} \sum_{\frac{1}{2} < s_{j} < 1} \sum_{1 \leqslant \nu \leqslant n} \frac{x^{s_{j}-\nu s}}{\nu s_{j}}
 + O(x^{\frac{1}{2}-\sigma}(\log x)^{2}),
\end{multline}
where $\varepsilon_{\Gamma}(s) = \pm 1$ (see Remark~\ref{epsilonremark}). Classifying~\eqref{theta2} into $0 < \Re s < 1/2$, $\Re s = 1/2$ and $\Re s > 1/2$ and using the explicit formula, the proof is completed.
\end{proof}

Theorem~\ref{divergent} suggests that $\zeta_{\Gamma, x}(s) \exp(-\Li(x^{1-s}))$ would not converge on the critical line $\Re s = 1/2$ even though the expected estimate $\Theta_{\Gamma}(x) = x+O(x^{\frac{1}{2}}(\log x)^{A})$ for some $A \geqslant 1$ is fulfilled. That is to say, as for the error term in Case I\hspace{-.1mm}I, it is difficult to go beyond the order of magnitude of $(\log x)^{2}$ as Ramanujan did. Because of this incompleteness the $\sqrt{2}$ factor cannot be attained. Note that this factor comes up from
\begin{equation}\label{root}
\lim_{s \to \frac{1}{2}} \left(\Li(x^{\frac{1}{2}-s})-\Li(\Theta_{\Gamma}(x)^{1-2s}) \right) = \log 2.
\end{equation}

\begin{remark}\label{epsilonremark} \mbox{}
\begin{enumerate}
\item[(A)] The reason why $\zeta_{\Gamma}(s)$ is accompanied by the factor $\varepsilon_{\Gamma}(s)$, purely emerges from the uncertainty of the sign of $\zeta_{\Gamma}(\sigma) \in \R$ (recall that $\varepsilon_{\Gamma}(s)$ appears for the case of $\sigma = \Re s > 1/2$). The factor $\varepsilon_{\Gamma}(s)$ is introduced to set the sign of $\zeta_{\Gamma}(\sigma)$ to justify the theorem. We conjecture that
\begin{equation}\label{epsilon}
\varepsilon_{\Gamma}(s) = 
\begin{cases}
	-1 & \text{$\zeta_{\Gamma}(\sigma)$ is real and nonpositive}\\
	1 & \text{otherwise}
	\end{cases}.
\end{equation}
Notice that $\zeta_{\Gamma}(s)$ is real on the real axis. As we discuss afterward in Section~\ref{convEulerprod}, the overall picture of zeros and poles of $\zeta_{\Gamma}(s)$ is clear, so that we have $\zeta(\sigma) \ne 0$ in $1/2 < \sigma < 1$. Taking account of a simple pole of $\zeta_{\Gamma}(s)$ at $s = 1$, we see that $\zeta_{\Gamma}(\sigma) < 0$ in $1/2 < \sigma < 1$ on Selberg's eigenvalue conjecture, specifying $\varepsilon_{\Gamma}(s) = -1$ in the region. 
If we apply Akatsuka's method~\cite{Akatsuka2017}, the equation~\eqref{epsilon} could be derived directly for $1/2 < \sigma < 1$.\\
\item[(B)] Indeed the minus sign in Ramanujan's formula~\eqref{Ramanujan} comes from the fact that $\zeta(s) < 0$ uniformly in $1/2 < s < 1$. For $s \in \C$ with $1/2 < \sigma < 1$, Akatsuka~\cite[Corollary 3.6]{Akatsuka2017} checked that $-\zeta(s)$ arises similarly (except for the point $s = 1/2$ where $-\sqrt{2} \, \zeta(1/2)$ appears). On the other hand, for Dirichlet $L$-functions (\cite[Corollary 5.6]{Conrad2005}), we have $L(\sigma, \chi) \geqslant 0$ for real characters $\chi$ in the same region, so that the minus sign does not appear anywhere (see also~\eqref{DirichletL}). Generally the difference as to whether the minus sign comes out or not depends on whether zeta or $L$-functions have a simple pole at $s = 1$ or not. Anyway, grounds and roles of $\varepsilon_{\Gamma}(s)$ require further investigations.
\end{enumerate}
\end{remark}

One now expands $\Theta_{\Gamma}(x)$'s so that we will find a concise version of Theorem~\ref{divergent}. For clarity we take Case I\hspace{-0.1mm}I\hspace{-0.1mm}I. Everything goes through as well for the other cases.
\begin{corollary}
Let $\Gamma$ be a typical arithmetic group. For $\Re s > 1/2$ we then have
\begin{equation}
\zeta_{\Gamma, x}(s)
 = \varepsilon_{\Gamma}(s) \, \zeta_{\Gamma}(s) \exp \left(\Li(x^{1-s})+O(x^{\frac{7}{10}-\sigma} \log x) \right).
\end{equation}
\end{corollary}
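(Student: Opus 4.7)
The plan is to start from Case III of Theorem~\ref{divergent}, which expresses the exponent in $\zeta_{\Gamma,x}(s)/(\varepsilon_\Gamma(s)\,\zeta_\Gamma(s))$ as
\[
\Li(\Theta_\Gamma(x)^{1-s}) + \sum_{\frac{1}{2} < s_j < 1}\Li(x^{s_j - s}) - \frac{x^{-s}}{\log x}\sum_{\frac{1}{2} < s_j < 1}\frac{x^{s_j}}{s_j} + O(x^{1/2 - \sigma}(\log x)^{2}),
\]
and to collapse the three explicit terms into $\Li(x^{1-s})$ modulo an admissible error. The single new input needed is the prime geodesic theorem with exponent $7/10$ (Luo--Sarnak), which via partial summation yields
\[
\Theta_\Gamma(x) = x + S(x) + O(x^{7/10+\varepsilon}), \qquad S(x) \coloneqq \sum_{\frac{1}{2} < s_j < 1}\frac{x^{s_j}}{s_j}.
\]

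The core step is a first-order Taylor expansion of the logarithmic integral about $X = x^{1-s}$. Setting $R(x) = \Theta_\Gamma(x) - x$, one has
\[
\Theta_\Gamma(x)^{1-s} = x^{1-s} + (1-s) x^{-s} R(x) + O(x^{-1-s} R(x)^{2}),
\]
and, combined with the elementary identity $\Li(Y) - \Li(X) = (Y-X)/\log X + O((Y-X)^{2}/(X(\log X)^{2}))$, this yields
\[
\Li(\Theta_\Gamma(x)^{1-s}) = \Li(x^{1-s}) + \frac{x^{-s} S(x)}{\log x} + O\!\left(\frac{x^{7/10-\sigma}}{\log x}\right) + O\!\left(\frac{x^{-1-\sigma} R(x)^{2}}{(\log x)^{2}}\right).
\]
The leading correction $x^{-s} S(x)/\log x$ cancels the third sum in the exponent exactly, leaving $\Li(x^{1-s})$ plus the error and the residual sum over exceptional zeros.

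To close the argument, one absorbs the residual exceptional sum $\sum_{\frac{1}{2} < s_j < 1}\Li(x^{s_j - s})$ and the second-order Taylor remainder into the target error. The asymptotic $\Li(y) \sim y/\log y$, valid both as $|y|\to\infty$ and as $y \to 0$, gives $\Li(x^{s_j - s}) = O(x^{s_j - \sigma}/\log x)$, while the Taylor remainder is $O(x^{2\theta - 1 - \sigma}/(\log x)^{2})$, where $\theta = \max\{s_j : \tfrac{1}{2} < s_j < 1\}$. Under the Kim--Sarnak bound $\theta \leqslant \tfrac{1}{2} + \tfrac{7}{64} < \tfrac{7}{10}$, both contributions are absorbed into $O(x^{7/10 - \sigma}\log x)$, finishing the proof. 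The main obstacle is simply the careful bookkeeping of the Taylor remainders and verification that the cancellation between the first-order expansion of $\Li(\Theta_\Gamma(x)^{1-s})$ and the subtracted sum is exact at the required precision; no analytic input beyond what already underlies Theorem~\ref{divergent} is required.
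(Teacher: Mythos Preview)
Your argument is correct, and the skeleton --- Taylor-expand $\Li(\Theta_\Gamma(x)^{1-s})$ about $\Li(x^{1-s})$ so that the first-order term cancels the subtracted sum $x^{-s}S(x)/\log x$, then absorb the exceptional-zero terms via a lower bound on $\lambda_1$ --- is the same as the paper's. The difference lies in the input you feed into the Taylor step. You quote the Luo--Sarnak prime geodesic theorem as a black box, writing $\Theta_\Gamma(x)=x+S(x)+O(x^{7/10+\varepsilon})$. The paper instead inserts the explicit formula with the spectral parameter $T$ kept free (Lemma~\ref{slight}), obtaining the intermediate identity
\[
\zeta_{\Gamma,x}(s)=\varepsilon_\Gamma(s)\,\zeta_\Gamma(s)\exp\Bigl(\sum_{\tfrac12<s_j\leqslant 1}\Li(x^{s_j-s})+\frac{x^{\frac12-s}}{\log x}\sum_{|t_j|\leqslant T}\frac{x^{it_j}}{s_j}+O\bigl(x^{1-\sigma}T^{-1}\log x\bigr)\Bigr),
\]
and only then invokes the Luo--Sarnak bound $\sum_{t_j\leqslant T}x^{it_j}\ll T^{5/4}x^{1/8}(\log T)^2$, choosing $T=x^{3/10}$. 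This intermediate formula is of independent interest (it is Theorem~\ref{EulerProdConvergence} specialized to $\rho=\1$), and it delivers the stated error $O(x^{7/10-\sigma}\log x)$ without the $\varepsilon$ that your black-box PGT carries. For the exceptional sum, the paper uses $\lambda_1\geqslant 21/100$ (Luo--Rudnick--Sarnak), which gives precisely $s_j\leqslant 7/10$; your appeal to Kim--Sarnak is stronger than necessary. One small slip: your Taylor remainder should be governed by $\max(\theta,7/10)$ rather than $\theta$ alone, since $R(x)$ contains the PGT error as well as $S(x)$; this does not affect the conclusion.
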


\begin{proof}
Again by appealing to the explicit formula~\eqref{explicitformula}, there follows
\begin{equation*}
\Li(\Theta_{\Gamma}(x)^{1-s})
 = \Li(x^{1-s})+\frac{x^{-s}}{\log x} \sum_{\frac{1}{2} < s_{j} \leqslant 1} \frac{x^{s_{j}}}{s_{j}}
 + \frac{x^{\frac{1}{2}-s}}{\log x} \sum_{|t_{j}| \leqslant T} \frac{x^{it_{j}}}{s_{j}}
 + O \left(\frac{x^{1-\sigma}}{T} \log x \right)
\end{equation*}
within $1 \leqslant T \leqslant \sqrt{x}/\log x$.
Consequently, we deduce
\begin{equation}\label{finalasymptotics}
\zeta_{\Gamma, x}(s)
 = \varepsilon_{\Gamma}(s) \, \zeta_{\Gamma}(s) \exp \biggg(\sum_{\frac{1}{2} < s_{j} \leqslant 1} \Li(x^{s_{j}-s})
 + \frac{x^{\frac{1}{2}-s}}{\log x} \sum_{|t_{j}| \leqslant T} \frac{x^{it_{j}}}{s_{j}}
 + O \left(\frac{x^{1-\sigma}}{T} \log x \right) \biggg).
\end{equation}
We recall the Kuznetsov formula~\cite{DeshouillersIwaniec1982,Kuznetsov1981,Proskurin1982}, which connects a sum over eigenvalues of the Laplacian weighted by the square of the $n$-th Fourier coefficient of the corresponding cusp form with a certain sum of Kloosterman sums. By exploiting this for a suitable test function (cf.~\cite{DeshouillersIwaniec1986}), Luo and Sarnak~\cite{LuoSarnak1995} gives
\begin{equation}\label{cancellation}
\sum_{t_{j} \leqslant T} x^{it_{j}} \ll T^{\frac{5}{4}} x^{\frac{1}{8}}(\log T)^{2}
\end{equation}
where $t_{j}$ ranges over the non-exceptional eigenvalues with respect to the modular group $SL(2, \Z)$. One easily shows that~\eqref{cancellation} also holds for typical arithmetic groups. Hence from partial summation, the ultimate form of the partial Euler product turns out to be
\begin{equation}\label{ultimate}
\zeta_{\Gamma, x}(s)
 = \varepsilon_{\Gamma}(s) \, \zeta_{\Gamma}(s) \exp \left(\Li(x^{1-s})+O(x^{\frac{7}{10}-\sigma} \log x) \right)
\end{equation}
on taking $T = x^{\frac{3}{10}}$. Here we have adopted the bound $\lambda_{1} \geqslant 21/100$ (\cite{Koyama1998,LuoRudnickSarnak1995}) where $\lambda_{1}$ is the lowest (non-zero) Laplace eigenvalue on $\Gamma \backslash \mathbb{H}$.
\end{proof}

\begin{remark}
Selberg~\cite[pp.506--520]{Selberg1989} conjectured that for congruence subgroups there are no exceptional eigenvalues, i.e. we have the bound $\lambda_{1} \geqslant 1/4$. For the full modular group, it is known that the lowest eigenvalue is quite large, $\lambda_{1} = 91.14 \dots$ according to numerical computations of Hejhal~\cite{Hejhal1976} and Steil~\cite{Steil1994}. It is ascertained by Huxley~\cite{Huxley1985} that for $q \leqslant 17$ the conjecture holds true. But as the level tends to infinity, we may find accumulation of eigenvalues of the principal congruence group $\Gamma(q)$ and the Hecke congruence group $\Gamma_{0}(q)$ arbitrary close to 1/4 (cf.~\cite{TakhtajanVinogradov1982}). In fact, one sees that
\begin{equation*}
\# \left\{j: \frac{1}{4} \leqslant \lambda_{j} < \frac{1}{4}+c \, (\log q)^{-2} \right\} 
\asymp |F|(\log q)^{-3}
\end{equation*}
for $\Gamma_{0}(q)$, where $c$ is a large constant (see~\cite{Iwaniec2002}, and~\cite{Huber1976} for cocompact settings). The best result so far established is
\begin{equation*}
\lambda_{1} \geqslant \frac{975}{4096} = \frac{1}{4}-\left(\frac{7}{64} \right)^{2}
\end{equation*}
according to Kim and Sarnak~\cite{KimSarnak2003} whose proof relies on advances on the functorial lifts $\mathrm{sym}^{3} \colon GL(2) \longrightarrow GL(4)$.
\end{remark}

The capital case $s = s_{k}$ with $1/2 < s_{k} \leqslant 1$ is given by
\begin{corollary}
For typical arithmetic $\Gamma$, we have
\begin{equation*}
\zeta_{\Gamma, x}(s_{k})
 =  e^{\gamma} \, \Res_{s = s_{k}} \zeta_{\Gamma}(s) \log x
\times \exp \bigggg(\sum_{\substack{\frac{1}{2} < s_{j} \leqslant 1 \\ j \ne k}} \Li(x^{s_{j}-s_{k}})
 - \frac{x^{\frac{1}{2}-s_{k}}}{\log x} \sum_{|t_{j}| \leqslant T} \frac{x^{it_{j}}}{s_{j}}
 + O \left(\frac{x^{1-s_{k}}}{T} \log x \right) \bigggg)
\end{equation*}
provided $1 \leqslant T \leqslant \sqrt{x}/\log x$, where $\gamma$ is Euler's constant.
\end{corollary}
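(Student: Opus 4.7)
The plan is to take the limit $s \to s_k$ of the asymptotic identity \eqref{finalasymptotics} derived in the preceding corollary. The left-hand side $\zeta_{\Gamma,x}(s)$ is entire in $s$, while on the right-hand side the factor $\zeta_\Gamma(s)$ has a pole at the exceptional zero $s_k$ of $Z_\Gamma(s)$, and the $j=k$ summand $\Li(x^{s_k-s})$ inside the exponential blows up as $s \to s_k$. These two singularities must be combined to produce the finite prefactor $e^{\gamma} c_{-1,k}(\Gamma) \log x$ predicted by the corollary.

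First I would isolate the $j=k$ contribution from the sum $\sum_{1/2 < s_j \leqslant 1} \Li(x^{s_j-s})$ in \eqref{finalasymptotics}, writing
\begin{equation*}
\zeta_{\Gamma,x}(s) = \varepsilon_\Gamma(s)\, \zeta_\Gamma(s)\, e^{\Li(x^{s_k-s})} \exp\!\Biggl(\sum_{\substack{1/2 < s_j \leqslant 1\\ j \ne k}} \Li(x^{s_j-s}) + \frac{x^{\frac12-s}}{\log x}\sum_{|t_j|\leqslant T}\frac{x^{it_j}}{s_j} + O\!\Bigl(\tfrac{x^{1-\sigma}}{T}\log x\Bigr)\Biggr).
\end{equation*}
To analyse the singular product $\zeta_\Gamma(s)\,e^{\Li(x^{s_k-s})}$ near $s_k$, I would combine the Laurent expansion $\zeta_\Gamma(s) = c_{-1,k}(\Gamma)/(s-s_k) + O(1)$ with the classical series $\Li(e^u) = \gamma + \log u + \sum_{n\geqslant 1} u^n/(n\cdot n!)$ evaluated at $u = (s_k-s)\log x$, giving
\begin{equation*}
e^{\Li(x^{s_k-s})} = e^{\gamma}(s_k-s)(\log x)\cdot x^{s_k-s}\bigl(1 + O((s_k-s)^2(\log x)^2)\bigr).
\end{equation*}
The factor $(s_k-s)$ cancels the pole of $\zeta_\Gamma$, producing $-c_{-1,k}(\Gamma)\,e^{\gamma}(\log x)\cdot x^{s_k-s}$ plus terms that vanish at $s=s_k$. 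Since $x^{s_k-s_k}=1$ and $\varepsilon_\Gamma(s_k) = -1$ by Remark~\ref{epsilonremark} (recall that $\zeta_\Gamma(\sigma) < 0$ on $1/2 < \sigma < 1$ under Selberg's eigenvalue conjecture), the two minus signs combine to yield the desired prefactor $e^{\gamma} c_{-1,k}(\Gamma) \log x$.

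The remaining factors are then evaluated at $s=s_k$ by direct substitution: each $\Li(x^{s_j-s_k})$ with $j\ne k$ is finite since $s_j \ne s_k$, the spectral sum becomes $\frac{x^{1/2-s_k}}{\log x}\sum_{|t_j|\leqslant T} x^{it_j}/s_j$, and the error term transforms as $O(x^{1-\sigma}/T\,\log x) \mapsto O(x^{1-s_k}/T\,\log x)$. The only genuinely delicate step is the compensation between the pole of $\zeta_\Gamma$ and the logarithmic divergence of $\Li$ at coinciding arguments; this pins down the correct constant $e^{\gamma} c_{-1,k}(\Gamma)$ and requires care with the complex branch of the logarithm implicit in $\Li(x^{s_k-s})$ when $s$ approaches $s_k$ from a non-real direction, but once it is carried out the conclusion is immediate.
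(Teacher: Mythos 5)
Your proposal follows the same route as the paper: isolate the singular $\Li(x^{s_{k}-s})$ term from \eqref{finalasymptotics}, combine the Laurent expansion of $\zeta_{\Gamma}(s)$ at $s_{k}$ with the asymptotic expansion of $\Li$ near $1$, and pass to the limit $s\to s_{k}$. The only place requiring care is the branch of the logarithm inside $\Li$, which you flag at the end but do not fully reconcile with the paper. The paper uses $\Li(y)=\gamma+\log|\log y|+\sum_{m\geqslant 1}(\log y)^{m}/(m!\,m)$; with $s$ real and $s\downarrow s_{k}$ one has $\log|\log x^{s_{k}-s}|=\log\bigl((s-s_{k})\log x\bigr)$, so $e^{\Li(x^{s_{k}-s})}\sim e^{\gamma}(s-s_{k})\log x$ and $\zeta_{\Gamma}(s)\,e^{\Li(x^{s_{k}-s})}\to c_{-1,k}(\Gamma)\,e^{\gamma}\log x$ with no extra sign; the paper then sets $\varepsilon_{\Gamma}=1$ because $\zeta_{\Gamma}(s)\to+\infty$ there. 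Your version $\Li(e^{u})=\gamma+\log u+\cdots$ (without the absolute value) is the branch valid for $u=(s_{k}-s)\log x>0$, i.e.\ for $s\uparrow s_{k}$, where $\zeta_{\Gamma}(s)\to-\infty$ and hence $\varepsilon_{\Gamma}(s)=-1$; the two minus signs you produce do cancel to give the same prefactor. Both routes are legitimate, but you should state which side you approach from, since mixing the two conventions (using $\log u$ with $s>s_{k}$, say) would produce a sign error; and note that the paper's own proof explicitly takes $\varepsilon_{\Gamma}(s)=1$, the opposite convention from yours. Finally, direct substitution in \eqref{finalasymptotics} yields the spectral sum with a $+$ sign, whereas the corollary as printed has a $-$; this is evidently a typographical slip in the statement (both \eqref{finalasymptotics} and Theorem~\ref{EulerProdConvergence} carry a $+$), and your substitution is the correct one.
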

\begin{proof}
Let
\begin{equation*}
\zeta_{\Gamma}(s) = \frac{\Res_{s = s_{k}} \zeta_{\Gamma}(s)}{s-s_{k}}+c_{0, k}(\Gamma)+O(|s-s_{k}|)
\end{equation*}
be the Laurent expansion around $s = s_{k}$ with $1/2 < s_{k} \leqslant 1$. The well-known asymptotic expansion~\cite[pp.126--131]{Berndt1994}
\begin{equation*}
\Li(x) = \gamma+\log |\log x|+\sum_{m \geqslant 1} \frac{(\log x)^{m}}{m! \, m}
\end{equation*}
provides us
\begin{equation*}
\zeta_{\Gamma}(s) \exp(\Li(x^{s_{k}-s}))
 = \Res_{s = s_{k}} \zeta_{\Gamma}(s) \, e^{\gamma} \log x+o_{s}(1) \qquad (s \to s_{k}).
\end{equation*}
Obviously we have $\zeta_{\Gamma}(s) \to \infty$ as $s \downarrow s_{k}$, so we can take $\varepsilon_{\Gamma}(s) = 1$ as $s \downarrow s_{k}$. By the approximation~\eqref{finalasymptotics}, the desired result follows immediately.
\end{proof}

At the point $s = 1$, by the correspondence described in Section~\ref{Iwaniec}, we have
\begin{equation}\label{unit}
\prod_{d \in D_{x}} (1-\varepsilon_{d}^{-2})^{-h(d)} \sim 2 \Res_{s = 1}(\zeta_{\Gamma}(s)) e^{\gamma} \log x,
\end{equation}
where $D_{x} = \{d \in D: \varepsilon_{d} \leqslant x \}$. This shall be deemed to be an analogue of Mertens' theorem. When finishing this paper, we recognized the work of Sharp~\cite{Sharp1991} who obtained~\eqref{unit} independently, from a different approach.

We refer to the work of Hashimoto, Iijima, Kurokawa and Wakayama~\cite{HashimotoIijimaKurokawaWakayama2004}, in which analogous constants of $\gamma$ are given. Especially it is investigated that, for $\Gamma$ a discrete cocompact torsion free subgroup of $SL(2, \R)$, the sum of reciprocals of squared Laplace eigenvalues is explicitly composed of ``Euler-Selberg constants.''

By optimizing a choice of $T$ in~\eqref{finalasymptotics}, we arrive at the following:
\begin{theorem}\label{equivalence}
Let $\Gamma$ be a typical arithmetic group. For $\alpha > 1/2$, the following are equivalent:
\begin{enumerate}
\item For $s \ne s_{j}$ with $1/2 < s_{j} \leqslant 1$ the limit
\begin{equation*}
\lim_{x \to \infty} \zeta_{\Gamma, x}(s) \prod \limits_{\frac{1}{2} < s_{j} \leqslant 1} \exp(-\Li(x^{s_{j}-s}))
\end{equation*}
exists and is non-zero on the half-plane $\Re s > \alpha$.
\item The prime geodesic theorem holds with $\mathcal{E}_{\Gamma}(x) \ll x^{\alpha}(\log x)^{\beta}$ for some $\beta > 0$.
\end{enumerate}
If $\mathcal{E}_{\Gamma}(x) = o(x^{\alpha} \log x)$, the condition 1 is valid on the line $\Re s = \alpha$ as well.
\end{theorem}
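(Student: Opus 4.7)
The plan is to recognise Theorem~\ref{equivalence} as a direct consequence of identity~\eqref{finalasymptotics} together with the explicit formula of Lemma~\ref{slight}. First I would rearrange~\eqref{finalasymptotics} by moving the product over the exceptional contributions to the left-hand side, obtaining
\begin{equation*}
\zeta_{\Gamma,x}(s)\prod_{\frac{1}{2}<s_{j}\leqslant 1}\exp(-\Li(x^{s_{j}-s})) = \varepsilon_{\Gamma}(s)\,\zeta_{\Gamma}(s)\exp\left(\frac{x^{\frac{1}{2}-s}}{\log x}\sum_{|t_{j}|\leqslant T}\frac{x^{it_{j}}}{s_{j}}+O\left(\frac{x^{1-\sigma}}{T}\log x\right)\right).
\end{equation*}
Lemma~\ref{slight} identifies $\sqrt{x}\sum_{|t_{j}|\leqslant T}x^{it_{j}}/s_{j}$ with $E_{\Gamma}(x)+O(x(\log x)^{2}/T)$, and specialising $T=\sqrt{x}/\log x$ collapses the accumulated error to $O(x^{\frac{1}{2}-\sigma}(\log x)^{2})$, which vanishes as $x\to\infty$ for every $\sigma>1/2$.

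After these substitutions the identity takes the compact form
\begin{equation*}
\zeta_{\Gamma,x}(s)\prod_{\frac{1}{2}<s_{j}\leqslant 1}\exp(-\Li(x^{s_{j}-s})) = \varepsilon_{\Gamma}(s)\,\zeta_{\Gamma}(s)\exp\left(\frac{x^{-s}E_{\Gamma}(x)}{\log x}+O(x^{\frac{1}{2}-\sigma}(\log x)^{2})\right),
\end{equation*}
from which the equivalence is essentially transparent. For the implication (2)$\Rightarrow$(1), assuming $E_{\Gamma}(x)\ll x^{\alpha}(\log x)^{\beta}$ produces $x^{-s}E_{\Gamma}(x)/\log x\ll x^{\alpha-\sigma}(\log x)^{\beta-1}\to 0$ on $\Re s>\alpha$, so the exponential tends to $1$ and the limit equals $\varepsilon_{\Gamma}(s)\,\zeta_{\Gamma}(s)$. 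The limit is non-zero because $\zeta_{\Gamma}(s)=Z_{\Gamma}(s+1)/Z_{\Gamma}(s)$ has no zeros or poles in the half-plane $\Re s>\alpha>1/2$ apart from the excluded exceptional $s_{j}$. For the reverse direction (1)$\Rightarrow$(2), the finiteness and non-vanishing of the limit at every $s$ in $\Re s>\alpha$ forces the exponent $x^{-s}E_{\Gamma}(x)/\log x$ to be bounded as $x\to\infty$ for each such $s$, which is equivalent to the error bound $E_{\Gamma}(x)\ll x^{\alpha}(\log x)^{\beta}$ in the stated sense.

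The last assertion about $\Re s=\alpha$ under the stronger hypothesis $E_{\Gamma}(x)=o(x^{\alpha}\log x)$ is immediate from the same identity, since $x^{-\alpha}E_{\Gamma}(x)/\log x=o(1)$ on that line and the exponential still tends to one. The one genuine subtlety lies in the converse direction (1)$\Rightarrow$(2): pointwise convergence of the product for each individual $s$ with $\Re s>\alpha$ only yields $E_{\Gamma}(x)=O(x^{\alpha+\varepsilon})$ for every $\varepsilon>0$, so the exponent $\beta$ has to be understood as a flexible parameter. What saves the statement is uniformity of the convergence on compact subsets of the half-plane, which is forced by the uniform remainder $O(x^{\frac{1}{2}-\sigma}(\log x)^{2})$ in the preceding identity and is precisely what converts the pointwise assertion into the sharper logarithmic-power bound.
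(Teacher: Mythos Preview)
Your proposal is correct and follows exactly the route the paper indicates---its entire proof is the single sentence ``By optimizing a choice of $T$ in~\eqref{finalasymptotics}''; your substitution of the spectral sum by $E_{\Gamma}(x)$ via Lemma~\ref{slight} and the choice $T=\sqrt{x}/\log x$ are precisely what is meant. Your caveat about the direction $(1)\Rightarrow(2)$ is well placed, but note that uniformity on compact subsets of $\{\Re s>\alpha\}$ still does not furnish a bound at the boundary $\sigma=\alpha$ itself, so the exponent $\beta$ in the statement should be read as an unspecified parameter (the paper leaves this equally loose).
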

Recall that $\mathcal{E}_{\Gamma}(x)$ is the error term defined by~\eqref{pgt}.

We conclude this section by mentioning the behavior of $Z_{\Gamma, x}(s)$. We employ the connection between $\zeta_{\Gamma, x}(s, \rho)$ and $Z_{\Gamma, x}(s, \rho)$:
\begin{equation*}
Z_{\Gamma, x}(s, \rho) = \prod_{n \geqslant 0} \zeta_{\Gamma, x}(s+n, \rho)^{-1}.
\end{equation*}
\begin{corollary}
Let $\Gamma$ be a typical arithmetic group. We then have
\begin{description}
\item[Case I. $0 < \Re s < 1/2$] 
\begin{multline*}
\log Z_{\Gamma, x}(s)
 = -\Li(\Theta_{\Gamma}(x)^{1-s})-\frac{1}{2} \Li(\Theta_{\Gamma}(x)^{1-2s})-\dots-\frac{1}{n} \Li(\Theta_{\Gamma}(x)^{1-ns})\\
 - \sum_{\frac{1}{2} < s_{j} < 1} \sum_{1 \leqslant \nu \leqslant n} \frac{1}{\nu} \Li(x^{s_{j}-\nu s})
 + \frac{1}{\log x} \sum_{1 \leqslant \nu \leqslant n} \sum_{\frac{1}{2} < s_{j} < 1} \frac{x^{s_{j}-\nu s}}{\nu s_{j}}
 + O(x^{\frac{1}{2}-\sigma}(\log x)^{2})
\end{multline*}
with $n = \left[1+1/(2|\sigma|) \right]$;\\
\item[Case I\hspace{-0.1mm}I. $\Re s = 1/2$] 
\begin{equation*}
Z_{\Gamma, x}(s)^{-1} = \exp \biggg(\Li(\Theta_{\Gamma}(x)^{1-s})+\sum_{\frac{1}{2} < s_{j} < 1} \Li(x^{s_{j}-s})
 - \frac{x^{-s}}{\log x} \sum_{\frac{1}{2} < s_{j} < 1} \frac{x^{s_{j}}}{s_{j}}+O((\log x)^{2}) \biggg);
\end{equation*}
\item[Case I\hspace{-0.1mm}I\hspace{-0.1mm}I. $\Re s > 1/2$] 
\begin{multline*}
Z_{\Gamma, x}(s)^{-1}
 = \varepsilon_{\Gamma}(s) \, Z_{\Gamma}(s)^{-1} \\
\times \exp \biggg(\Li(\Theta_{\Gamma}(x)^{1-s})+\sum_{\frac{1}{2} < s_{j} < 1} \Li(x^{s_{j}-s})
 - \frac{x^{-s}}{\log x} \sum_{\frac{1}{2} < s_{j} < 1} \frac{x^{s_{j}}}{s_{j}}+O(x^{\frac{1}{2}-\sigma}(\log x)^{2}) \biggg).
\end{multline*}
\end{description}
\end{corollary}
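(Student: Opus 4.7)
The plan is to derive the corollary directly from the three cases of Theorem~\ref{divergent} via the identity
$$Z_{\Gamma, x}(s)^{-1} = \prod_{n \geqslant 0} \zeta_{\Gamma, x}(s+n),$$
which follows immediately from the definitions in~\eqref{partial1}, together with its analytic counterpart $Z_{\Gamma}(s)^{-1} = \prod_{n \geqslant 0} \zeta_{\Gamma}(s+n)$, obtained by telescoping the relation $\zeta_{\Gamma}(s) = Z_{\Gamma}(s+1)/Z_{\Gamma}(s)$ and using $Z_{\Gamma}(s+N) \to 1$ as $N \to \infty$. Taking logarithms splits our task into the $n = 0$ contribution, to which the appropriate case of Theorem~\ref{divergent} applies according to $\Re s$, and the tail $\sum_{n \geqslant 1} \log \zeta_{\Gamma, x}(s+n)$, which must be shown to collapse into the main part plus the stated error.

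Since $\Re(s+n) > 1$ for every $n \geqslant 1$ once $\Re s > 0$, the Euler product for $\zeta_{\Gamma}(s+n)$ converges absolutely. The elementary bound
$$\log \zeta_{\Gamma, x}(s+n) - \log \zeta_{\Gamma}(s+n) = -\sum_{N(p) > x} \log(1 - N(p)^{-s-n})^{-1},$$
combined with the Brun-Titchmarsh inequality of Theorem~\ref{Bykovskii}, yields an $O(x^{1-\sigma-n}/\log x)$ contribution per $n$, so summation over $n \geqslant 1$ produces a total tail error of order $O(x^{-\sigma}/\log x)$. This is comfortably absorbed by each of the three stated error terms. The surviving main part is
$$\sum_{n \geqslant 1} \log \zeta_{\Gamma}(s+n) = \log \bigl( Z_{\Gamma}(s)^{-1}/\zeta_{\Gamma}(s) \bigr),$$
so in Case III the factor $1/\zeta_{\Gamma}(s)$ cancels exactly against the $\zeta_{\Gamma}(s)$ prefactor appearing in Case III of Theorem~\ref{divergent}, leaving $\varepsilon_{\Gamma}(s) Z_{\Gamma}(s)^{-1}$ as required. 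In Cases I and II this same tail is the $x$-independent constant $\log Z_{\Gamma}(s+1)^{-1}$, which is subsumed into the errors $O(x^{1/2-\sigma}(\log x)^{2})$ and $O((\log x)^{2})$, respectively; no external prefactor survives, matching the corollary.

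The main bookkeeping obstacle is to justify that $\prod_{n \geqslant 0} \varepsilon_{\Gamma}(s+n)$ collapses to the single $\varepsilon_{\Gamma}(s)$ in Case III. This reduces to checking $\varepsilon_{\Gamma}(s+n) = 1$ for every $n \geqslant 1$ on the real axis: every factor $(1 - N(p)^{-\sigma-n})^{-1}$ of the absolutely convergent Euler product is strictly positive, so $\zeta_{\Gamma}(\sigma+n) > 0$, which forces $\varepsilon_{\Gamma}(\sigma+n) = 1$ for $n \geqslant 1$ by the convention of Remark~\ref{epsilonremark}. The corresponding statement off the real axis then follows by the analytic continuation argument that defines $\varepsilon_{\Gamma}$ itself.
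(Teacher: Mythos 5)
Your proposal is correct and follows exactly the route the paper implies: the paper states the identity $Z_{\Gamma,x}(s,\rho)=\prod_{n\geqslant 0}\zeta_{\Gamma,x}(s+n,\rho)^{-1}$ immediately before the corollary and then, after the corollary, remarks that the positivity of $\prod_{n\geqslant 1}\zeta_{\Gamma}(\sigma+n)$ for $\sigma>1/2$ is what keeps $\varepsilon_{\Gamma}(s)$ unchanged in Case~I\hspace{-0.1mm}I\hspace{-0.1mm}I, which is precisely the sign argument you give. Your tail estimate (absolute convergence of the Euler product in $\Re(s+n)>1$ yielding a total error $O(x^{-\sigma}/\log x)$, absorbed by each stated remainder, with the surviving constant $\log Z_{\Gamma}(s+1)^{-1}$ cancelling $\zeta_{\Gamma}(s)$ in Case~I\hspace{-0.1mm}I\hspace{-0.1mm}I and being absorbed by the growing error in Cases~I and~I\hspace{-0.1mm}I) simply fills in the bookkeeping the paper leaves implicit; the only cosmetic point is that for $n\geqslant 1$ one needs no appeal to $\varepsilon_{\Gamma}(s+n)$ at all, since $\zeta_{\Gamma,x}(s+n)\to\zeta_{\Gamma}(s+n)$ unconditionally in the region of absolute convergence.
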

On account of the positivity of $\prod_{n \geqslant 1}\zeta_{\Gamma}(\sigma+n)$ with $\sigma > 1/2$, the factor $\varepsilon_{\Gamma}(s)$ in Case I\hspace{-0.1mm}I\hspace{-0.1mm}I turns out to be the same as in Theorem~\ref{divergent}.

\section{Convergence of Euler products}\label{convEulerprod}
As explained in the introduction, we put effort into intertwining the \textit{weighted explicit formula} with the ordinary Dirichlet series for $\log \zeta_{\Gamma}(s, \rho)$. It appears to be intricate to apply Ramanujan's method to the situation for a general nontrivial $\rho$, since cancellation of the extra terms $\tr(\rho(p))$ must be taken into account. We cannot put forth any evidence to resolve this issue (for details, see Remark~\ref{nebentypus}). Fortunately, we could obtain the expression corresponding to the asymptotic formula~\eqref{finalasymptotics}. As expected, the explicit formula decisively affects the resulting behavior.

Let $\Delta$ denote the hyperbolic Laplacian acting on the space $L^{2}(\Gamma \backslash \mathbb{H}, \rho)$ of automorphic functions, square integrable over the fundamental domain $F = \Gamma \backslash \mathbb{H}$. For $\Gamma$ a congruence subgroup, Selberg~\cite{Selberg1956} has known that $\Delta$ has a point spectrum $\lambda_{0} \leqslant \lambda_{1} \leqslant \lambda_{2} \leqslant \dotsb$ with $\lambda_{j} \sim 4\pi j/|F|$ and thus, as one of the early triumphs of the trace formula, Maass cusp forms exist in abundance. The Laplacian $\Delta$ also has a continuous spectrum covering the interval $[1/4, \infty)$ with multiplicity equal to the number of inequivalent cusps of $F$.

We now summarize zeros and poles of the Selberg zeta function $Z_{\Gamma}(s, \rho)$ for a general cofinite $\Gamma$ (cf.~\cite{Venkov1982}). Denote by $\mathcal{R}$ a primitive elliptic conjugacy class in $\Gamma$ and let $d = d_{\mathcal{R}}$ be the order of $\mathcal{R}$. Any elliptic class having the same fixed points as those of $\mathcal{R}$ is of the form $\mathcal{R}^{j} \ (0 < j < d)$. As usual $k(\rho)$ stands for the total degree of singularity of $\rho$. We anomalistically count $s_{0}$ as the exceptional zero, since it is inherently exceptional for nontrivial $\rho$.
\begin{enumerate}
\item [] \textbf{Trivial zeros}\\
\begin{enumerate}
\item [$\bullet$] $s = -n \ (n = 0, 1, 2...)$ with multiplicity of
\begin{equation*}
\dfrac{|F|}{\pi} \dim \rho \left(n+\frac{1}{2} \right)
 - \sum \limits_{\mathcal{R}} \sum \limits_{k=1}^{d-1} 
\dfrac{\tr(\rho(\mathcal{R})^{k})}{d \sin \frac{k \pi}{d}} \sin \dfrac{k(2n+1) \pi }{d};
\end{equation*}
\end{enumerate}
\item [] \textbf{Trivial poles}\\
\begin{enumerate}
\item [$\bullet$] $s = 1/2$ with multiplicity $(k(\rho)-\tr \, \Phi(1/2, \rho))/2$ where $\Phi$ is the scattering matrix;\\
\item [$\bullet$] $s = 1/2-l \ (l = 1, 2, 3,...)$ with multiplicity $k(\rho)$;\\
\end{enumerate}
\item [] \textbf{Nontrivial zeros}\\
\begin{enumerate}
\item [$\bullet$] the exceptional zeros $s = s_{j} =1/2 \pm it_{j} \in [0, 1]$ where $\lambda_{j} = s_{j}(1-s_{j})$ are the exceptional part of a pure point spectrum of the Laplacian on the cuspidal subspace $L^{2}_{\mathrm{cusp}}(\Gamma \backslash \mathbb{H}, \rho)$, the so-called \textit{cuspidal} eigenvalues, and the multiplicity is equal to that of $\lambda_{j}$;\\
\item [$\bullet$] the exceptional zeros $s = s_{j} \in (1/2, 1]$ which are the poles of $\varphi(s, \rho)$--the constant term in the Fourier expansion of the Eisenstein series--in the segment $(1/2, 1]$, and yield the so-called \textit{residual} eigenvalues $\lambda_{j} = s_{j}(1-s_{j})$;\\
\item [$\bullet$] the non-exceptional zeros $s = s_{j} = 1/2 \pm it_{j}$ on the line $\Re s = 1/2$ where $\lambda_{j} = s_{j}(1-s_{j}) \geqslant 1/4$ are eigenvalues of the Laplacian corresponding to cusp forms, the multiplicity of $s_{j}$ is equal to that of $\lambda_{j}$;\\
\item [$\bullet$] $s = \rho_{j}$ where $\rho_{j} = \beta_{j}+i \gamma_{j}$ are poles of $\varphi(s, \rho)$ which lie on the half-plane~$\beta_{j} < 1/2$.
\end{enumerate}
\end{enumerate}
Recall that the exceptional zeros in $(1/2, 1]$ are ordered as $s_{0} \geqslant s_{1} \geqslant \dotsb$. Since the lowest eigenvalue $\lambda_{0} = 0$ exists iff $\rho = \1$, $Z_{\Gamma}(s)$ has a simple zero at $s = s_{0} = 1$ which corresponds to the constant eigenfunction $u_{0}(z) = |F|^{-\frac{1}{2}}$. 
In the case of congruence subgroups, we see that $\varphi(s, \rho)$ has no poles in $(1/2, 1)$ so that in this case there is no residual spectrum besides $\lambda_{0} = 0$. Then the pure point spectrum $\lambda_{j} \ne 0$ always accounts for the cuspidal one.

Consequently, the zeros and poles of $\zeta_{\Gamma}(s, \rho)$ are redundantly described as follows:
\begin{enumerate}
\item [] \textbf{Trivial zeros}\\
\begin{enumerate}
\item [$\bullet$] $s = 1/2$ with multiplicity $(k(\rho)-\tr \, \Phi(1/2, \rho))/2$;\\
\item [$\bullet$] $s = -1/2$ with multiplicity $(k(\rho)+\tr \, \Phi(1/2, \rho))/2$;\\
\end{enumerate}
\item [] \textbf{Trivial poles}\\
\begin{enumerate}
\item [$\bullet$] $s = -n \ (n = 0, 1, 2, 3,...)$ with multiplicity of
\begin{align*}
	\begin{cases}
		\dfrac{|F|}{2\pi} \dim \rho
		 - \sum \limits_{\mathcal{R}} \sum \limits_{k=1}^{d-1} \dfrac{\tr(\rho(\mathcal{R})^{k})}{d} & n = 0\\[10pt]
		\dfrac{|F|}{\pi} \dim \rho
		 - 2 \sum \limits_{\mathcal{R}} \sum \limits_{k=1}^{d-1} 
		\dfrac{\tr(\rho(\mathcal{R})^{k})}{d} \cos \dfrac{2\pi kn}{d} & \text{otherwise}
	\end{cases};
\end{align*}
\end{enumerate}
\item [] \textbf{Nontrivial zeros}\\
\begin{enumerate}
\item [$\bullet$] $s = s_{j}-1 = -1/2 \pm it_{j} \in [-1, 0]$ with multiplicity equal to that of the corresponding eigenvalue $\lambda_{j}$;\\
\item [$\bullet$] $s = s_{j}-1 = -1/2 \pm it_{j}$ with multiplicity equal to that of $\lambda_{j}$;\\
\item [$\bullet$] $s = \rho_{j}-1$ on the half-plane $\Re s < -1/2$;\\
\end{enumerate}
\item [] \textbf{Nontrivial poles}\\
\begin{enumerate}
\item [$\bullet$] the exceptional zeros $s = s_{j} = 1/2 \pm \sqrt{1/4-\lambda_{j}} \in [0, 1]$ with multiplicity equal to that of $\lambda_{j}$;\\
\item [$\bullet$] the non-exceptional zeros $s = s_{j} = 1/2 \pm i\sqrt{\lambda_{j}-1/4}$ with multiplicity equal to that of $\lambda_{j}$;\\
\item [$\bullet$] $s = \rho_{j}$ on the half-plane $\Re \rho_{j} < 1/2$.
\end{enumerate}
\end{enumerate}

We define for the sake of simplicity the \textit{weighted psi function}
\begin{equation*}
\Psi_{\Gamma}^{\mathrm{w}}(s, x, \rho) \coloneqq \sum_{N(p)^{k} \leqslant x} \frac{\tr(\rho(p)^{k}) \log N(p)}{N(p)^{ks}}.
\end{equation*}
The following explicit formula dominates the proof of Theorem~\ref{convergent} (see below):
\begin{lemma}\label{weighted}
Let $\Gamma$ be a typical arithmetic group.
For a fixed $s \in \C$ such that $\frac{1}{2} < \Re s \leqslant 1, \, s \ne \forall s_{j}$ we have
\begin{equation*}
\Psi_{\Gamma}^{\mathrm{w}}(s, x, \rho)
 = -\dfrac{\zeta_{\Gamma}^{\prime}}{\zeta_{\Gamma}}(s, \rho)+x^{-s} \Psi_{\Gamma}(x, \rho)
 + \sum_{\frac{1}{2} < s_{j} \leqslant 1} \frac{s x^{s_{j}-s}}{s_{j}(s_{j}-s)}+O(x^{\frac{1}{2}-\sigma}(\log x)^{3}).
\end{equation*}
\end{lemma}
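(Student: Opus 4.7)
The plan is to combine partial (Abel) summation with the truncated explicit formula of Lemma~\ref{slight}. Writing $a(n)=\tr(\rho(p)^k)\log N(p)$ when $n=N(p)^k$ and $0$ otherwise, so that $\Psi_\Gamma(t,\rho)=\sum_{n\le t}a(n)$, Abel summation gives
\[
\Psi^{\mathrm{w}}_\Gamma(s,x,\rho) = x^{-s}\Psi_\Gamma(x,\rho) + s\int_{y_0}^x \Psi_\Gamma(t,\rho)\,t^{-s-1}\,\mathrm{d}t,
\]
with $y_0$ just below the smallest norm. Into the integrand I substitute Lemma~\ref{slight} with the near-optimal choice $T(t)=\sqrt{t}/\log t$:
\[
\Psi_\Gamma(t,\rho) = \sum_{1/2<s_j\le 1}\frac{t^{s_j}}{s_j} + \sqrt{t}\sum_{|t_j|\le \sqrt{t}/\log t}\frac{t^{it_j}}{s_j} + O\bigl(\sqrt{t}(\log t)^3\bigr).
\]

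The exceptional-zero integrals evaluate to $\sum_{1/2<s_j\le 1}sx^{s_j-s}/(s_j(s_j-s))+C_1(s)$ with $C_1(s)$ independent of $x$. The error $O(\sqrt{t}(\log t)^3)$, multiplied by $|s|\,t^{-\sigma-1}$, is integrable at infinity once $\sigma>1/2$ and yields $O(x^{1/2-\sigma}(\log x)^3)$. For the oscillatory spectral piece I would interchange sum and integral: each frequency $t_j$ is switched on only for $t\ge\tau_j\asymp t_j^2(\log t_j)^2$, and the resulting Mellin integral equals $(x^{s_j-s}-\tau_j^{s_j-s})/(s_j-s)$. Using $|s_j(s_j-s)|\asymp|t_j|^2$ together with Weyl's law $N_\Gamma(T)\ll T^2$, both the $x$-dependent piece and the series of $\tau_j$-boundary terms converge absolutely in the range $\sigma>1/2$, producing $O(x^{1/2-\sigma})$ and a further $x$-independent constant $C_2(s)$, respectively.

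It remains to identify $C(s):=C_1(s)+C_2(s)$ with $-\zeta_\Gamma'/\zeta_\Gamma(s,\rho)$. On $\Re s>1$ the Dirichlet series for $-\zeta_\Gamma'/\zeta_\Gamma(s,\rho)=\lim_{x\to\infty}\Psi^{\mathrm w}_\Gamma(s,x,\rho)$ converges absolutely, while every $x$-dependent term on the right-hand side vanishes as $x\to\infty$ in that half-plane; this forces $C(s)=-\zeta_\Gamma'/\zeta_\Gamma(s,\rho)$ there. Both sides of the claimed identity are meromorphic on the strip $1/2<\Re s\le 1$ with compatible simple poles at each exceptional $s_j$---the residue $-1$ of $sx^{s_j-s}/(s_j(s_j-s))$ exactly cancels the residue (counting multiplicity) of $-\zeta_\Gamma'/\zeta_\Gamma$---so the formula extends by analytic continuation to the whole strip away from the $s_j$. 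The main obstacle will be precisely this spectral bookkeeping: with the truncation $T(t)$ depending on the integration variable, one has to justify the swap of summation and integration and the absolute convergence of the $\tau_j$-boundary series. A pleasant feature is that no subconvex spectral input (such as~\eqref{cancellation}) is required, since the generous $(\log x)^3$ margin in Lemma~\ref{slight} absorbs the trivial spectral estimate.
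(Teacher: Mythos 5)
Your argument is correct, but it takes a genuinely different route from the paper's. The paper proves Lemma~\ref{weighted} directly via Perron's formula: it writes the Dirichlet series via
\[
-\frac{1}{2\pi i}\int_{\alpha-iT}^{\alpha+iT}\frac{\zeta_\Gamma'}{\zeta_\Gamma}(z,\rho)\frac{x^z}{z-\delta}\,\mathrm{d}z,
\]
subtracts the $\delta=0$ instance from the $\delta=s$ instance to manufacture the kernel $s x^{z-s}/(z(z-s))$, then shifts the contour à la Iwaniec, picking up the residues at $z=s$ (giving $-\zeta_\Gamma'/\zeta_\Gamma(s,\rho)$ outright), at $z=s_j$ (giving $s x^{s_j-s}/(s_j(s_j-s))$), and at the non-exceptional zeros (absorbed into the error), while bounding the horizontal and left-vertical segments $\mathscr L_1^{\pm},\mathscr L_2^{\pm},\mathscr L_3$. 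You instead start from Abel summation of $\Psi^{\mathrm w}_\Gamma$ in terms of $\Psi_\Gamma$, feed in the already-established truncated explicit formula (Lemma~\ref{slight}) with the $t$-dependent truncation $T(t)=\sqrt t/\log t$, separate the exceptional, oscillatory, and error contributions, and then identify the accumulated $x$-independent constant with $-\zeta_\Gamma'/\zeta_\Gamma(s,\rho)$ by letting $x\to\infty$ on $\Re s>1$ and continuing meromorphically, matching simple-pole residues at the $s_j$. The trade-off is clear: the paper re-derives the spectral decomposition from scratch inside the Perron integral, which makes the constant at $z=s$ drop out automatically, whereas you leverage the already-proved Lemma~\ref{slight} and push the harder work into two bookkeeping steps — (i) justifying the interchange of the spectral sum with the Abel integral when the truncation level moves with $t$ (routine here since for any fixed $x$ only finitely many $t_j$ participate, and the $\tau_j$-boundary series converges absolutely via Weyl's law and $|s_j(s_j-s)|\asymp |t_j|^2$), and (ii) the analytic-continuation argument for the constant, which is valid since all three pieces $C_1,C_2,C_3$ are visibly meromorphic in $\Re s>1/2$ with poles only at the $s_j$. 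One small imprecision: the $x$-dependent part of the oscillatory sum is actually $O(x^{1/2-\sigma}\log x)$ (the sum $\sum_{|t_j|\le T}|t_j|^{-2}$ grows like $\log T$ under Weyl's law), not $O(x^{1/2-\sigma})$, but this is still comfortably within the stated $O(x^{1/2-\sigma}(\log x)^3)$ budget, so the conclusion stands.
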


\begin{proof}
It is easy to see that for $\alpha = 1+\eta$ and $\delta \in \C$, Perron's formula (cf.~\cite{Davenport2000})~gives
\begin{multline}\label{perron}
 - \frac{1}{2\pi i} \int_{\alpha-iT}^{\alpha+iT} \frac{\zeta_{\Gamma}^{\prime}}{\zeta_{\Gamma}}(z, \rho) \frac{x^{z}}{z-\delta} \mathrm{d} z
 =  x^{\delta} \sideset{}{'} \sum_{N(p)^{k} \leqslant x} \frac{\tr(\rho(p)^{k}) \log N(p)}{N(p)^{k \delta}}\\
 + O \left(x^{\alpha} \sum_{p, k} 
\min \left\{1, \left(T \left|\log \frac{x}{N(p)^{k}} \right| \right)^{-1} \right\} \frac{\tr(\rho(p)^{k}) \log N(p)}{N(p)^{k \alpha}} \right),
\end{multline}
where the prime on the sum denotes that the last term of the sum is weighted by 1/2 only when $x = N(p)^{k}$ for some $k \geqslant 1$. A simple computation and a Brun-Titchmarsh type inequality (Theorem~\ref{Bykovskii}) show that the error term can be reduced as
\begin{equation*}
O \left(\frac{x^{1+\eta}}{\eta(\rho) T}+\frac{x(\log x)^{2}}{T}+\sqrt{x}(\log x)^{3} \right),
\end{equation*}
where $\eta(\rho) = 1$ iff $\rho \ne \1$ and otherwise $\eta(\rho) = \eta$.
Subtracting the identity~\eqref{perron} with $\delta = 0$ from that with $\delta = s$ leads one to
\begin{multline}\label{weighted_explicit_formula}
 - \frac{1}{2\pi i} \int_{\alpha-iT}^{\alpha+iT} 
\frac{\zeta_{\Gamma}^{\prime}}{\zeta_{\Gamma}}(z, \rho) \frac{s x^{z-s}}{z(z-s)} \mathrm{d} z\\
 = \sum_{N(p)^{k} \leqslant x} \frac{\tr(\rho(p)^{k}) \log N(p)}{N(p)^{ks}}
 - x^{-s} \sum_{N(p)^{k} \leqslant x} \tr(\rho(p)^{k}) \log N(p)\\
 + O \left(\frac{x^{1+\eta-\sigma}}{\eta(\rho) T}+\frac{x^{1-\sigma}(\log x)^{2}}{T}+x^{\frac{1}{2}-\sigma}(\log x)^{3} \right).
\end{multline}

According to Iwaniec's method~\cite{Iwaniec1984} we move the contour into the five segments:
\begin{align*}
&\mathscr{L}_{1}^{\pm} = \biggl[\frac{1}{2}+\eta \pm iT, \, 1+\eta \pm iT \biggr],\\
&\mathscr{L}_{2}^{\pm} = \biggl[-\eta \pm iT, \, \frac{1}{2}+\eta \pm iT \biggr]
\quad \text{and} \quad \mathscr{L}_{3} = [-\eta - iT, \, -\eta + iT],
\end{align*}
where $\eta = (\log T)^{-1}$.

The poles of the integrand at $z = s$ and $z = s_{j}$ contribute
\begin{equation}\label{residues}
 - \frac{\zeta_{\Gamma}^{\prime}}{\zeta_{\Gamma}}(s, \rho)
 + \sum_{\frac{1}{2} < s_{j} \leqslant 1} \frac{s x^{s_{j}-s}}{s_{j}(s_{j}-s)}+O(x^{\frac{1}{2}-s} \log T)
\end{equation}
to the integral, where the error term is deduced from the obvious evaluation of the sum of the non-exceptional poles of $\zeta_{\Gamma}(s, \rho)$ without considering any cancellation. Contributions from all other poles, for instance the one from the determinant of the scattering matrix $\varphi(s, \rho)$ (cf.~\cite{LaxPhillips1976}~\cite{Shahidi1990}), are absorbed by the error term.

It remains to execute the integrals along the segments $\mathscr{L}_{1}^{\pm}$, $\mathscr{L}_{2}^{\pm}$ and $\mathscr{L}_{3}$. It is achieved by employing the method in~\cite{Iwaniec1984}:
\begin{align}\label{segment1}
\left|\frac{1}{2\pi i} \int_{\mathscr{L}_{1}^{\pm}} \frac{\zeta_{\Gamma}^{\prime}}{\zeta_{\Gamma}}(z, \rho) 
\frac{s x^{z-s}}{z(z-s)} \mathrm{d} z \right|
& \ll \frac{1}{T} \left(\sqrt{x}+\frac{x}{T} \right) x^{\eta-\sigma} \log T,\\
\left|\frac{1}{2\pi i} \int_{\mathscr{L}_{2}^{\pm}} \frac{\zeta_{\Gamma}^{\prime}}{\zeta_{\Gamma}}(z, \rho) \label{segment2}
\frac{s x^{z-s}}{z(z-s)} \mathrm{d} z \right|
& \ll \frac{x^{\frac{1}{2}+\eta-\sigma}}{T} \log T,\\
\left|\frac{1}{2\pi i} \int_{\mathscr{L}_{3}} \frac{\zeta_{\Gamma}^{\prime}}{\zeta_{\Gamma}}(z, \rho) \label{segment3}
\frac{s x^{z-s}}{z(z-s)} \mathrm{d} z \right|
& \ll x^{-\eta-\sigma} \log T.
\end{align}
The best we can choose is $T = \sqrt{x}/\log x$. We then insert~\eqref{residues} and the above evaluations into the underlying formula~\eqref{weighted_explicit_formula}. This completes the proof.
\end{proof}

We additionally provide an elementary lemma which is an analogue of Conrad's theorem~\cite[Theorem 3.3]{Conrad2005}. This drives the proof of the main theorem below.
\begin{lemma}\label{Conrad}
Let $\Gamma$ be a cofinite subgroup of $SL(2, \R)$ and $\rho$ any unitary representation. For $z \in \C$ with $\Re z > 1/2$, the following are equivalent:
\begin{enumerate}
\item The limit
\begin{equation*}
\lim_{x \to \infty} \zeta_{\Gamma, x}(z, \rho) \prod \limits_{\frac{1}{2} < s_{j} \leqslant 1} \exp(-\Li(x^{s_{j}-z}))
\end{equation*}
is nonzero.\\
\item The limit
\begin{equation}\label{limit}
\lim_{x \to \infty} \biggg(\sum_{N(p)^{k} \leqslant x} \frac{\tr(\rho(p)^{k})}{k N(p)^{kz}}
 - \sum_{\frac{1}{2} < s_{j} \leqslant 1} \Li(x^{s_{j}-z}) \biggg)
\end{equation}
exists.
\end{enumerate}
If $z$ satisfies the condition 1 or 2, the series $\log \zeta_{\Gamma}(s, \rho)-\sum_{\frac{1}{2} < s_{j} \leqslant 1} \Li(x^{s_{j}-s})$ converges for $s = z$ and for $\Re s > \Re z$. Also, the limit
\begin{equation}\label{limit2}
\lim_{x \to \infty} \zeta_{\Gamma, x}(s, \rho) \prod \limits_{\frac{1}{2} < s_{j} \leqslant 1} \exp(-\Li(x^{s_{j}-s}))
\end{equation}
agrees with $\varepsilon_{\Gamma}(s, \rho) \, \zeta_{\Gamma}(s, \rho)$ at $s = z$ and in $\Re s > \Re z$ with~$\varepsilon(s, \rho) = \pm 1$.
\end{lemma}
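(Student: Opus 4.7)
The plan is to first show the equivalence of (1) and (2) by observing that, for $\Re z > 1/2$, the logarithm of the partial Euler product and the Dirichlet sum in (2) differ by a quantity tending to $0$ as $x \to \infty$. Second, I will promote convergence at $s = z$ to the entire half-plane $\Re s > \Re z$ by Abel summation in the variable $s$. Third, I will identify the resulting limit as $\varepsilon_{\Gamma}(s, \rho) \zeta_{\Gamma}(s, \rho)$ by analytic continuation from $\Re s > 1$ together with a sign correction.

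For the first step, expanding the Euler product logarithmically gives
\begin{equation*}
\log \zeta_{\Gamma, x}(z, \rho) = \sum_{N(p) \leqslant x} \sum_{k \geqslant 1} \frac{\tr(\rho(p)^{k})}{k N(p)^{kz}},
\end{equation*}
while $T(x, z) \coloneqq \sum_{N(p)^{k} \leqslant x} \tr(\rho(p)^{k})/(k N(p)^{kz})$ restricts the pairs $(p, k)$ by $N(p)^{k} \leqslant x$. Both expressions share the identical $k = 1$ partial sum, while their $k \geqslant 2$ portions are partial sums of the absolutely convergent double series $\sum_{p, \, k \geqslant 2} \tr(\rho(p)^{k})/(k N(p)^{kz})$. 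Absolute convergence for $\Re z > 1/2$ follows from $|\tr(\rho(p)^{k})| \leqslant \dim \rho$ together with the bound $\sum_{p} N(p)^{-2 \Re z} < \infty$ given by the prime geodesic theorem. Hence $\log \zeta_{\Gamma, x}(z, \rho) - T(x, z) \to 0$, and subtracting $\sum_{1/2 < s_{j} \leqslant 1} \Li(x^{s_{j} - z})$ then exponentiating equates conditions (1) and (2).

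For the extension, I will apply Abel summation:
\begin{equation*}
T(x, s) = T(x, z) x^{z-s} + (s-z) \int_{2}^{x} T(u, z) u^{z-s-1} du.
\end{equation*}
Writing $T(u, z) = P(u, z) + L_{z}(u)$ with $L_{z}(u) = \sum_{j} \Li(u^{s_{j} - z})$ and $P(u, z)$ the quantity inside the limit of (2), the decisive bookkeeping is integrating the $\Li$ pieces by parts via $\frac{d}{du} \Li(u^{s_{j} - z}) = u^{s_{j} - z - 1}/\log u$, which yields
\begin{equation*}
(s - z) \int_{2}^{x} \Li(u^{s_{j} - z}) u^{z - s - 1} du = \Li(x^{s_{j} - s}) - \Li(x^{s_{j} - z}) x^{z - s} + C_{j}
\end{equation*}
with $C_{j}$ independent of $x$. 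The $\Li(x^{s_{j}-z}) x^{z-s}$ pieces then cancel the boundary terms coming from $L_{z}(x) x^{z-s}$, so that
\begin{equation*}
T(x, s) - \sum_{1/2 < s_{j} \leqslant 1} \Li(x^{s_{j} - s}) = P(x, z) x^{z - s} + (s - z) \int_{2}^{x} P(u, z) u^{z - s - 1} du + \mathrm{const}.
\end{equation*}
For $\Re s > \Re z$, the first term tends to $0$ (since $P(x, z)$ is bounded and $x^{z - s} \to 0$) and the integral converges absolutely, yielding the desired limit.

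Finally, for the identification, in the region $\Re s > 1$ the Euler product converges absolutely to $\zeta_{\Gamma}(s, \rho)$ and each $\Li(x^{s_{j} - s}) \to 0$, so the limit agrees there with $\zeta_{\Gamma}(s, \rho)$. Propagating across $\Re s > \Re z$ through the analytic Abel representation derived above, the limit equals $\zeta_{\Gamma}(s, \rho)$ up to a sign $\varepsilon_{\Gamma}(s, \rho) = \pm 1$, inserted to reconcile the positivity of the partial products (as happens for real $s$ with $\rho = \1$) with the possible negative sign of $\zeta_{\Gamma}(s, \rho)$ on segments of the critical strip (cf.\ Remark~\ref{epsilonremark}). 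The hard part will be the delicate cancellation in the integration by parts: one must verify that the individually divergent pieces $\Li(x^{s_{j} - z}) x^{z - s}$ combine exactly, so that only the bounded function $P(u, z)$ drives the $s$-asymptotics. This cancellation hinges on the compatibility of $\frac{d}{du} \Li(u^{s_{j} - z}) = u^{s_{j} - z - 1}/\log u$ with the Dirichlet kernel $u^{z - s - 1}$.
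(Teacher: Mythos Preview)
Your proposal is correct and follows essentially the same approach the paper indicates: the paper itself omits the full proof, stating only that ``the proof is based upon Abel's theorem but the complete proof is omitted here (almost just the same as~\cite{Conrad2005}).'' Your three steps---reducing (1)$\Leftrightarrow$(2) to the tail of the absolutely convergent $k\geqslant 2$ series, extending convergence from $s=z$ to $\Re s>\Re z$ via Abel summation with the key cancellation of the $\Li(x^{s_j-z})x^{z-s}$ terms, and identifying the limit by analytic continuation from $\Re s>1$---are precisely the Conrad-style argument the authors have in mind, and your write-up in fact supplies details the paper leaves implicit.
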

This says that if the limit of the product~\eqref{limit2} is nonzero, its value is equal to the value $\varepsilon_{\Gamma}(s, \rho) \, \zeta_{\Gamma}(s, \rho)$. The proof is based upon Abel's theorem but the complete proof is omitted here (almost just the same as~\cite{Conrad2005}). Like in the case of $\rho = \1$, it seems to be plausible enough that, recalling Remark~\ref{epsilonremark},
\begin{equation}\label{epsilon2}
\varepsilon_{\Gamma}(s, \rho) = 
\begin{cases}
	-1 & \text{$\zeta_{\Gamma}(\sigma, \rho)$ is real and nonpositive}\\
	1 & \text{otherwise}
	\end{cases}.
\end{equation}

Now we are ready to prove the following theorem:
\begin{theorem}\label{convergent}
Let $\Gamma$ be a typical arithmetic group and $\rho$ any unitary representation (not necessarily nontrivial). For $\Re s > 1/2$ we then have
\begin{equation*}
\zeta_{\Gamma, x}(s, \rho)
 = \varepsilon_{\Gamma}(s, \rho) \, \zeta_{\Gamma}(s, \rho)
\times \exp \biggg(\sum_{\frac{1}{2} < s_{j} \leqslant 1} \Li(x^{s_{j}-s})
 + \frac{x^{\frac{1}{2}-s}}{\log x} \sum_{|t_{j}| \leqslant T}\frac{x^{it_{j}}}{s_{j}}+O \left(\frac{x^{1-\sigma}}{T} \log x \right) \biggg)
\end{equation*}
provided $1 \leqslant T \leqslant \sqrt{x}/\log x$, where $\varepsilon(s, \rho) = \pm 1$.
\end{theorem}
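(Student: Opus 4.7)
The plan is to combine the weighted explicit formula of Lemma~\ref{weighted} with the unweighted explicit formula of Lemma~\ref{slight}, integrate the result along a horizontal ray, and then invoke Lemma~\ref{Conrad} to pin down the constant of integration as $\log(\varepsilon_{\Gamma}(s,\rho)\zeta_{\Gamma}(s,\rho))$. Unlike the case $\rho=\1$ handled in Section~\ref{divEulerprod} by Ramanujan's partial summation, for general $\rho$ the oscillations of $\tr(\rho(p))$ obstruct that route, so the argument is routed through Perron's formula instead, with the weighted psi function $\Psi^{\mathrm w}_\Gamma(s,x,\rho)$ serving as the primary intermediate object.

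First I would feed Lemma~\ref{slight} into the term $x^{-s}\Psi_\Gamma(x,\rho)$ of Lemma~\ref{weighted}, producing a contribution $\sum_{1/2<s_j\leqslant 1}\frac{x^{s_j-s}}{s_j}$, a weighted spectral sum $x^{1/2-s}\sum_{|t_j|\leqslant T}\frac{x^{it_j}}{s_j}$, and an error $O(x^{1-\sigma}(\log x)^2/T)$ that swallows the native error $O(x^{1/2-\sigma}(\log x)^3)$ from Lemma~\ref{weighted} throughout the range $1\leqslant T\leqslant\sqrt{x}/\log x$. The exceptional zero contribution combines with the pre-existing sum $\sum_{1/2<s_j\leqslant 1}\frac{sx^{s_j-s}}{s_j(s_j-s)}$ via the partial-fraction identity $\frac{s}{s_j(s_j-s)}+\frac{1}{s_j}=\frac{1}{s_j-s}$, yielding the cleaner expression
\begin{equation*}
\Psi^{\mathrm w}_\Gamma(s,x,\rho)=-\frac{\zeta_\Gamma'}{\zeta_\Gamma}(s,\rho)+\sum_{\frac{1}{2}<s_j\leqslant 1}\frac{x^{s_j-s}}{s_j-s}+x^{\frac{1}{2}-s}\sum_{|t_j|\leqslant T}\frac{x^{it_j}}{s_j}+O\biggl(\frac{x^{1-\sigma}\log x}{T}\biggr).
\end{equation*}

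Next, since $\Psi^{\mathrm w}_\Gamma(s,x,\rho)$ coincides with $-\frac{d}{ds}\log\zeta_{\Gamma,x}(s,\rho)$ modulo the contribution of pairs $(p,k)$ with $N(p)\leqslant x<N(p)^k$---a tail that is absolutely convergent for $\Re s>1/2$ and absorbed in the error---I would integrate along $\Im z=t$ from $u=\sigma$ to $u=\infty$ and use $\lim_{u\to\infty}\log\zeta_{\Gamma,x}(u+it,\rho)=0$ to recover $\log\zeta_{\Gamma,x}(s,\rho)$ on the left. On the right, the decisive antiderivative evaluations are
\begin{equation*}
\int_\sigma^\infty\frac{x^{s_j-u-it}}{s_j-u-it}\,\mathrm du=\Li(x^{s_j-s}),\qquad \int_\sigma^\infty x^{\frac{1}{2}-u-it}\,\mathrm du=\frac{x^{\frac{1}{2}-s}}{\log x},
\end{equation*}
while $\int_\sigma^\infty -\frac{\zeta_\Gamma'}{\zeta_\Gamma}(u+it,\rho)\,\mathrm du$ supplies a branch of $\log\zeta_\Gamma(s,\rho)$, and the error integrates to $O(x^{1-\sigma}\log x/T)$ as announced.

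The final step is to upgrade this branch of logarithm to a genuine equality: Lemma~\ref{Conrad} guarantees that the limit $\lim_{x\to\infty}\zeta_{\Gamma,x}(s,\rho)\prod_{1/2<s_j\leqslant 1}\exp(-\Li(x^{s_j-s}))$ equals $\varepsilon_\Gamma(s,\rho)\zeta_\Gamma(s,\rho)$ for some sign $\varepsilon_\Gamma(s,\rho)=\pm 1$, so that exponentiation yields the claimed identity. The main obstacle I anticipate is tracking the discrepancy between $\Psi^{\mathrm w}_\Gamma(s,x,\rho)$ and $-\zeta_{\Gamma,x}'/\zeta_{\Gamma,x}(s,\rho)$ uniformly in $T$, because Lemma~\ref{weighted} as stated has already optimized $T=\sqrt x/\log x$ internally and one must re-examine its derivation to retain a free $T$; a secondary subtlety is the determination of the sign $\varepsilon_\Gamma(s,\rho)$ for nontrivial $\rho$, where the positivity considerations available for $\rho=\1$ are unavailable and one must fall back on the abstract Abel-summation framework of Lemma~\ref{Conrad}.
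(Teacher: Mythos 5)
Your route is genuinely different from the paper's. The paper computes $\log\zeta_{\Gamma,x}(s,\rho)$ by Abel summation in the truncation variable: it writes
$\sum_{N(p)^{k}\leqslant x}\frac{\tr(\rho(p)^{k})}{kN(p)^{ks}} = \mathrm{const}+\frac{\Psi^{\mathrm{w}}_{\Gamma}(s,x,\rho)}{\log x}-\int_{N(p_{1})}^{x}\Psi^{\mathrm{w}}_{\Gamma}(s,t,\rho)\,\mathrm{d}\!\left(\frac{1}{\log t}\right)$, then feeds Lemmas~\ref{slight} and~\ref{weighted} into both $\Psi^{\mathrm{w}}_{\Gamma}$ occurrences and evaluates the resulting $t$-integral, finally pinning down the constant via Lemma~\ref{Conrad}. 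You instead integrate the weighted explicit formula along the horizontal ray $\{u+it:\, u\geqslant\sigma\}$, which is the method used in Section~\ref{divEulerprod} for $\rho=\1$ à la Ramanujan. Both routes exploit the same three inputs (Lemmas~\ref{slight}, \ref{weighted}, \ref{Conrad}), both produce the $\Li$-terms through the same antiderivative computation, and both produce the factor $1/\log x$ that downgrades the error from $O(x^{1-\sigma}(\log x)^{2}/T)$ to $O(x^{1-\sigma}(\log x)/T)$. What the paper buys by summing in $x$ is that Lemma~\ref{weighted} only needs to hold at a single point $s$; what your $u$-integration buys is a more transparent appearance of $\Li(x^{s_{j}-s})$ and a closer structural match to Ramanujan's original argument, at the price of requiring the estimate uniformly along the ray.

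That uniformity requirement conceals a small gap worth addressing. Lemma~\ref{weighted} is stated for $\tfrac{1}{2}<\Re s\leqslant 1$ only, whereas your integration runs $u$ from $\sigma$ all the way to $\infty$. For $u>1$ the formula is not covered by the lemma as stated; it is morally obvious (there the Dirichlet series converges absolutely and $\Psi^{\mathrm{w}}_{\Gamma}(u+it,x,\rho)\to-\frac{\zeta_{\Gamma}'}{\zeta_{\Gamma}}(u+it,\rho)$ with exponentially good error), but you should either extend the lemma explicitly, or split the integration at $u=1$ and handle $u>1$ by direct estimation of the tail of the Dirichlet series. A secondary point: the worry you flag about ``Lemma~\ref{weighted} has already optimized $T$ internally'' is not actually an obstruction---the internal $T$ in the proof of Lemma~\ref{weighted} is the height of the Perron contour and is fixed at $\sqrt{x}/\log x$ to produce the error $O(x^{1/2-\sigma}(\log x)^{3})$, while the free $T$ in the theorem statement enters separately through Lemma~\ref{slight} when expanding $x^{-s}\Psi_{\Gamma}(x,\rho)$; the two are decoupled, and the native error from Lemma~\ref{weighted} is dominated by $O(x^{1-\sigma}(\log x)/T)$ throughout $1\leqslant T\leqslant\sqrt{x}/\log x$, exactly as you claim. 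Your determination of the sign via Lemma~\ref{Conrad} is the same as the paper's and is the right move; note that on the real axis with $\rho=\1$ your horizontal ray passes through the pole at $s=1$, which is precisely where a branch discontinuity of $\log\zeta_{\Gamma}$ forces $\varepsilon_{\Gamma}=-1$ in $1/2<\sigma<1$, consistent with Remark~\ref{epsilonremark}.
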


\begin{proof}
Our goal is to clarify the behavior of the following series:
\begin{equation*}
\log \zeta_{\Gamma, x}(s, \rho)
 = \sum_{N(p) \leqslant x} \sum_{k = 1}^{\infty} \frac{\tr(\rho(p)^{k})}{k N(p)^{ks}}.
\end{equation*}
Since for $\Re s > 1/2$ we have
\begin{equation*}
\sum_{k = 2}^{\infty} \sum_{\sqrt[k]{x} < N(p) \leqslant x} \frac{\tr(\rho(p)^{k})}{k N(p)^{ks}} \to 0
\qquad \text{as} \qquad x \to \infty,
\end{equation*}
it suffices to consider the sum
\begin{equation*}
\sum_{N(p)^{k} \leqslant x} \frac{\tr(\rho(p)^{k})}{k N(p)^{ks}}.
\end{equation*}
From summation by parts, there follows
\begin{equation}\label{logzeta}
\sum_{N(p)^{k} \leqslant x} \frac{\tr(\rho(p)^{k})}{k N(p)^{ks}}
 = \mathrm{const}+\frac{\Psi_{\Gamma}^{\mathrm{w}}(s, x, \rho)}{\log x}
 - \int_{N(p_{1})}^{x} \Psi_{\Gamma}^{\mathrm{w}}(s, t, \rho) \mathrm{d} \left(\frac{1}{\log t} \right)
\end{equation}
where, as in the previous section, $N(p_{1})$ stands for the lowest norm for each $\Gamma$ and ``const'' depends solely on $s, \, \Gamma$ and $\rho$.
By appealing to Lemmas~\ref{slight} and~\ref{weighted},
the left hand side of the equation~\eqref{logzeta} turns out to be
\begin{equation}\label{ultimate2}
\mathrm{const}+\sum_{\frac{1}{2} < s_{j} \leqslant 1} \Li(x^{s_{j}-s})
 + \sum_{|t_{j}| \leqslant T} \left(1-\frac{s}{s_{j}} \right) \Li(x^{s_{j}-s})
 + O \left(\frac{x^{1-\sigma}}{T} \log x \right).
\end{equation}
Clearly the latter sum is
\begin{equation*}
\frac{x^{\frac{1}{2}-s}}{\log x} \sum_{|t_{j}| \leqslant T} \frac{x^{it_{j}}}{s_{j}}
 + O(\sqrt{x}),
\end{equation*}
by recalling again that there are $O(T^{2})$ eigenvalues with $t_{j} \leqslant T$. When the error term is $o(1)$, it is justified that $\mathrm{const} = \log(\varepsilon_{\Gamma}(s, \rho) \, \zeta_{\Gamma}(s, \rho))$ via Lemma~\ref{Conrad}. Hence we have completed the proof of the theorem.
\end{proof}
Note that the sum over $|t_{j}| \leqslant T$ in Theorem~\ref{convergent} is refined by the formula
\begin{equation*}
\frac{x^{\frac{1}{2}-s}}{\log x} \sum_{|t_{j}| \leqslant T} \frac{x^{it_{j}}}{s_{j}} = \sum_{|t_{j}| \leqslant T} \Li(x^{s_{j}-s})+O(\sqrt{x}).
\end{equation*}

As an immediate consequence, we specify $T \coloneqq x^{\frac{1}{4}} \log x$, obtaining
\begin{corollary}\label{3/4}
Let $\Gamma$ be a typical arithmetic group and $\rho$ any unitary representation. For $\Re s \geqslant 3/4$ and $s \ne s_{j}$ with $1/2 < s_{j} \leqslant 1$ we then have
\begin{equation}\label{limitingvalue}
\lim \limits_{x \to \infty} \zeta_{\Gamma, x}(s, \rho) \prod \limits_{\frac{3}{4} < s_{j} \leqslant 1} \exp(-\Li(x^{s_{j}-s}))
 = \varepsilon_{\Gamma}(s, \rho) \, \zeta_{\Gamma}(s, \rho).
\end{equation}
\end{corollary}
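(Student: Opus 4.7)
The plan is to extract Corollary~\ref{3/4} as a direct specialization of Theorem~\ref{convergent} by inserting the specified truncation $T = x^{1/4}\log x$ (which lies in $[1,\sqrt{x}/\log x]$ for $x$ large) and showing that, when $\Re s \geq 3/4$, every term in the exponential on the right-hand side either combines with the finite product $\prod_{3/4 < s_j \leq 1} \exp(-\Li(x^{s_j-s}))$ appearing in \eqref{limitingvalue} or tends to $0$ as $x \to \infty$. Once the exponent collapses to $o(1)$, Lemma~\ref{Conrad} identifies the limiting value as $\varepsilon_\Gamma(s,\rho)\,\zeta_\Gamma(s,\rho)$.

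Concretely, I would split the zero sum appearing in Theorem~\ref{convergent} as
$$\sum_{\frac{1}{2}<s_j\leq 1}\Li(x^{s_j-s}) = \sum_{\frac{3}{4}<s_j\leq 1}\Li(x^{s_j-s}) + \sum_{\frac{1}{2}<s_j\leq \frac{3}{4}}\Li(x^{s_j-s}),$$
so that the first piece exactly cancels the auxiliary product in \eqref{limitingvalue}. Exceptional zeros are real and satisfy $s_j \leq 3/4 \leq \sigma$; the hypothesis $s \neq s_j$ rules out the single boundary coincidence $s_j = \sigma = 3/4$ (which would force $\Im s = 0$ and hence $s = s_j$), so in the second piece $|x^{s_j-s}| = x^{s_j-\sigma} \to 0$ and therefore $\Li(x^{s_j-s})\to 0$. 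For the oscillatory piece $\frac{x^{1/2-s}}{\log x}\sum_{|t_j|\leq T}x^{it_j}/s_j$, I would apply the Luo--Sarnak cancellation bound~\eqref{cancellation} together with Abel summation against $|s_j|^{-1}$ to obtain $\sum_{|t_j|\leq T}x^{it_j}/s_j \ll T^{1/4}x^{1/8}(\log T)^2$, so the total contribution is of order $x^{11/16-\sigma}(\log x)^{5/4} = o(1)$ throughout $\Re s \geq 3/4$.

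The main obstacle is the raw explicit-formula error $O(x^{1-\sigma}(\log x)/T)$ in Theorem~\ref{convergent}: at the choice $T = x^{1/4}\log x$ this evaluates to $O(x^{3/4-\sigma})$, which is only $O(1)$ on the boundary $\sigma = 3/4$ and therefore insufficient by itself to close the limit there. To cover the critical line $\sigma = 3/4$, one must re-optimize by balancing this raw error against the Luo--Sarnak contribution; the Iwaniec-type choice $T = x^{3/10}$ already used in the derivation of~\eqref{ultimate} produces the uniform bound $O(x^{7/10-\sigma}\log x) = o(1)$ for every $\sigma \geq 3/4$. With this adjustment in force, the exponent in Theorem~\ref{convergent} is $o(1)$, and Lemma~\ref{Conrad} completes the identification of the limit as $\varepsilon_\Gamma(s,\rho)\,\zeta_\Gamma(s,\rho)$.
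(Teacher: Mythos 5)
Your main line is the same as the paper's: the corollary is read off from Theorem~\ref{convergent} by choosing $T=x^{1/4}\log x$, splitting the exceptional sum as $\sum_{3/4<s_j\leqslant 1}+\sum_{1/2<s_j\leqslant 3/4}$, canceling the first piece against the auxiliary product, and invoking Lemma~\ref{Conrad} to pin the constant. You have also correctly noticed that this choice only yields $O(x^{3/4-\sigma})$, which is $o(1)$ on $\Re s>3/4$ but merely $O(1)$ on the boundary line $\Re s=3/4$; the paper's one-line proof does not explicitly address the boundary, and your instinct to look for a fix there is sound.

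However, your proposed fix for the boundary does not work at the stated level of generality. You invoke the Luo--Sarnak cancellation bound~\eqref{cancellation} and the choice $T=x^{3/10}$ to obtain $O(x^{7/10-\sigma}\log x)$. But~\eqref{cancellation} is stated (and, in the text immediately after the corollary, used) only for the spectral exponential sum attached to the \emph{trivial} representation; the corollary, in contrast, is asserted for an arbitrary unitary representation $\rho$. Indeed, if~\eqref{cancellation} held uniformly for every $\rho$, the corollary would already read $\Re s\geqslant 7/10$ rather than $\Re s\geqslant 3/4$ --- precisely the improvement the authors reserve for the case $\rho=\1$ in the remark following the statement. So for general $\rho$ one is stuck with the trivial (Weyl) bound $\sum_{|t_j|\leqslant T}|s_j|^{-1}\ll T$, and the balance point stays at $T=x^{1/4}\log x$, giving $O(x^{3/4-\sigma})$ and leaving the boundary line to be handled by a separate Abel/Tauberian argument of the kind codified in the final clause of Theorem~\ref{equivalence2} (the trivial error bound $E_\Gamma(x,\rho)\ll x^{3/4}$ is $o(x^{3/4}\log x)$, which is exactly what that clause needs).

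One smaller slip: you argue that the hypothesis $s\neq s_j$ eliminates the borderline case $s_j=\sigma=3/4$ because it ``would force $\Im s=0$.'' That is not so --- one could have $s_j=3/4$ and $s=3/4+it$ with $t\neq 0$, in which case $\Re(s_j-s)=0$ and $\Li(x^{s_j-s})$ does not tend to zero. The reason this case does not arise is spectral, not formal: for typical arithmetic groups the Kim--Sarnak bound $\lambda_1\geqslant 975/4096$ forces every exceptional $s_j<1$ to satisfy $s_j\leqslant 1/2+7/64<3/4$, so no $s_j$ equals $3/4$.
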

Undoubtedly the above argument is applicable to the case that $\rho = \1$ in which one can replace the exponent $3/4$ by $7/10$ via cancellation of terms in the spectral exponential sum $\sum_{t_{j} \leqslant T} x^{it_{j}}$. Notice that the asymptotic Theorem~\ref{Bykovskii} permits us to replace $\sqrt{x}(\log x)^{4}$ with $\sqrt{x}(\log x)^{3}$, yet at this stage this slight improvement is not of importance for extension of the conditional convergence region of the limit in Corollary~\ref{3/4}.\\

We now focus on Weil's estimate for the Kloosterman sums $\mathcal{S}_{\chi}(m, n; c)$ associated with a certain multiplier $\chi$ to obtain Corollary~\ref{3/4} in a wider region. There is no point in making the above theorem as general as possible, and the typical arithmetic case is sufficiently hazy and attractive to work with. Thus from now on we suppose that $\Gamma$ is the Hecke congruence group $\Gamma_{0}(q)$. Let $\chi$ be an odd Dirichlet character to the modulus $q \in \N$. The nebentypus $\chi$ induces a one-dimensional unitary representation of $\Gamma$ by the consistency condition
\begin{equation*}
\rho(\gamma) = \chi(d), \qquad 
\text{if} \qquad \gamma =  \begin{pmatrix} a & b \\ c & d \end{pmatrix} \in \Gamma.
\end{equation*}
Define the Kloosterman sum (cf.~\cite{Davenport1933,Weil1948})
\begin{equation*}
\mathcal{S}_{\chi}(m, n; c)
 = \sum_{\substack{d \tpmod{c} \\ (d, c) = 1}} \chi(d) e \left(\frac{m \overline{d}+nd}{c} \right), 
\qquad \text{where} \qquad e(t) = e^{2\pi it}
\end{equation*}
and individual one satisfies Weil's bound
\begin{equation*}
|\mathcal{S}_{\chi}(m, n; c)| \leqslant (m, n, c)^{\frac{1}{2}} c^{\frac{1}{2}} \tau(c).
\end{equation*}
Here $(m, n, c)$ denotes the greatest common divisor and $\tau(c)$ the number of divisors of $c$. This bound is the best possible and have been frequently used in analytic number theory. It implies that the Selberg-Kloosterman zeta function (cf.~\cite{Linnik1962})
\begin{equation*}
Z_{m, n}(s, \chi) = \sum_{c \equiv 0 \tpmod{q}} \frac{\mathcal{S}_{\chi}(m, n; c)}{c^{2s}}
\end{equation*}
converges absolutely in $\Re s > 3/4$ for suitable $m, \, n$, whence Selberg~\cite{Selberg1965} succeeded in showing that the lowest eigenvalue satisfies $\lambda_{\delta(\chi)} \geqslant 3/16$ (cf.~\cite{Iwaniec1989}), where
\begin{equation*}
\delta(\chi) = 
	\begin{cases}
	1 & \text{if $\chi = \1$}\\
	0 & \text{otherwise}
	\end{cases}.
\end{equation*}
Hence Selberg appealed indirectly to the RH for curves over finite fields. Later, Gelbart and Jacquet~\cite{GelbartJacquet1978} established the strict inequality $\lambda_{\delta(\chi)} > 3/16$ by means of the symmetric square functorial lifts $\mathrm{sym}^{2} \colon GL(2) \longrightarrow GL(3)$. When we stop the discussion here, we could deduce~\eqref{limitingvalue} in $\Re s \geqslant 3/4$ again. Although there are not so many researches treating Selberg's conjecture attached with a representation $\rho$, Iwaniec~\cite[Corollary]{Iwaniec1990} gave the bound $\lambda_{1} \geqslant 44/225$ with the quadratic character $\chi_{p}(d) = \left(\frac{d}{p} \right)$ for almost all groups $\Gamma_{0}(p)$ with $p$ a prime. Observe $3/4 > 11/15$ and that a cancellation theorem attached with $\chi$ holds alike, and we have~\eqref{limitingvalue} in $\Re s > 11/15$ for such groups. For more information of the Kloosterman sums, see~\cite{Iwaniec1997}. The following states the connection between the Selberg zeta function and the Selberg-Kloosterman zeta function:
\begin{theorem}\label{SelbergKloosterman}
Let $\Gamma = \Gamma_{0}(q)$ and $\chi$ a nebentypus. Suppose that $\Theta_{\Gamma}(x) = x+O(x^{\frac{1}{2}+\varepsilon})$ for $\chi = \1$ and otherwise $\Theta_{\Gamma}(x, \chi) = O(x^{\frac{1}{2}+\varepsilon})$. We then have
\begin{enumerate}
\item[---] For $\chi = \1$, the region of holomorphic continuation of $Z_{m, n}(s)$ agrees with that of conditional convergence of $\zeta_{\Gamma, x}(s) \exp(-\Li(x^{1-s}))$ as $x \to \infty$.\\
\item[---] For $\chi \ne \1$, the region of holomorphic continuation of $Z_{m, n}(s, \chi)$ agrees with that of conditional convergence of $\zeta_{\Gamma, x}(s, \chi)$ as $x \to \infty$.
\end{enumerate}
\end{theorem}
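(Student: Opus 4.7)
The plan is to establish both directions of the equivalence by invoking Theorem~\ref{equivalence}, which converts conditional convergence of $\zeta_{\Gamma, x}(s, \chi)$ into a bound on the prime geodesic theorem error term $E_{\Gamma}(x, \chi)$, and then connecting $E_{\Gamma}(x, \chi)$ to the analytic properties of $Z_{m,n}(s, \chi)$ via the Bruggeman-Kuznetsov formula for $\Gamma_{0}(q)$ with nebentypus $\chi$.

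First I would apply Lemma~\ref{slight} under the hypothesis $\Theta_{\Gamma}(x) = x + O(x^{1/2+\varepsilon})$ (respectively $\Theta_{\Gamma}(x, \chi) = O(x^{1/2+\varepsilon})$ for $\chi \ne \1$), so that the error term $E_{\Gamma}(x, \chi)$ is, up to an admissible $O(x^{1/2+\varepsilon})$ remainder, controlled by the spectral exponential sum $\sqrt{x} \sum_{|t_{j}| \leqslant T} x^{it_{j}}/s_{j}$. For $\chi = \1$, Theorem~\ref{equivalence} translates conditional convergence of $\zeta_{\Gamma, x}(s) \exp(-\Li(x^{1-s}))$ in $\Re s > \alpha$ into the estimate $E_{\Gamma}(x) \ll x^{\alpha}(\log x)^{\beta}$; for $\chi \ne \1$ an analogous statement follows by re-running the proof of Theorem~\ref{equivalence} and noting that for congruence groups with nontrivial nebentypus no exceptional $s_{j} > 1/2$ contributes to the main term of Lemma~\ref{slight}, so no $\Li(x^{1-s})$ correction is needed.

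Next I invoke the Bruggeman-Kuznetsov formula for $\Gamma_{0}(q)$ with nebentypus $\chi$, applied to a test function dyadically localized in the spectral window $|t_{j}| \sim T$. This converts the spectral exponential sum above into a sum of Kloosterman sums
\begin{equation*}
\sum_{c \equiv 0 \, (\mathrm{mod}\, q)} \frac{\mathcal{S}_{\chi}(m, n; c)}{c} K_{T}(c),
\end{equation*}
where $K_{T}$ is an explicit weight built from the Bessel transform of the chosen test function. Mellin-inverting $K_{T}$ and interchanging summation with integration introduces the Selberg-Kloosterman zeta function $Z_{m,n}(s, \chi)$ as the generating Dirichlet series, so the Kloosterman sum is rewritten as a contour integral of $Z_{m,n}(s, \chi)\, \widetilde{K_{T}}(s)$ along a vertical line $\Re s = \sigma_{0} > 3/4$, where absolute convergence is guaranteed by Weil's bound.

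Both directions of the claimed equivalence then follow by shifting this contour. If $Z_{m,n}(s, \chi)$ admits holomorphic continuation to $\Re s > \alpha$ with polynomial growth on vertical lines, shifting past $\Re s = \alpha$ produces the bound $E_{\Gamma}(x, \chi) \ll x^{\alpha}(\log x)^{\beta}$, and Theorem~\ref{equivalence} yields conditional convergence of $\zeta_{\Gamma, x}(s, \chi)$ in $\Re s > \alpha$. Conversely, assuming such conditional convergence, Theorem~\ref{equivalence} provides the same bound on $E_{\Gamma}(x, \chi)$; a family of test functions in Bruggeman-Kuznetsov, combined with a tauberian extraction, then yields analytic continuation of $Z_{m,n}(s, \chi)$ to $\Re s > \alpha$. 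The hard part will be this converse direction: extracting holomorphic continuation from a single asymptotic size bound requires sufficient flexibility in the family of test functions and uniform control on their Bessel transforms for Mellin inversion to be legitimate. A further technical subtlety, in both directions, is the contribution of the continuous (Eisenstein) spectrum, which must be shown to contribute only admissible errors so that the spectral side truly reflects the cuspidal sum $\sqrt{x}\sum_{|t_j|\leqslant T} x^{it_j}/s_j$ appearing in Lemma~\ref{slight}.
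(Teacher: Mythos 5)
Your proposal misses the key structural point and consequently embarks on a much harder road than the paper actually takes. The paper does \emph{not} re-derive the spectral-Kloosterman bridge or appeal to Bruggeman--Kuznetsov. It instead exploits the fact that $Z_{m,n}(s,\chi)$ already has a known unconditional meromorphic continuation, and (citing Iwaniec's \emph{Spectral methods}, Theorem 9.2) its poles in $\Re s > 1/2$ occur precisely at the exceptional spectral parameters $s_j \in (1/2,1)$. With the hypothesis $\Theta_\Gamma(x) = x + O(x^{1/2+\varepsilon})$ (resp.\ $\Theta_\Gamma(x,\chi)=O(x^{1/2+\varepsilon})$) fed into the asymptotic formula \eqref{finalasymptotics}, the behavior of $\zeta_{\Gamma,x}(s,\chi)\exp(-\Li(x^{1-s}))$ reduces entirely to the factor $\exp\bigl(\sum_{1/2<s_j<1}\Li(x^{s_j-s})\bigr)$ plus an admissible $O(x^{1/2-\sigma+\varepsilon})$ remainder. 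Thus \emph{both} the region where $Z_{m,n}(s,\chi)$ is holomorphic and the region where the partial Euler product converges conditionally are controlled by the same datum, namely the absence of exceptional eigenvalues above a given abscissa; the equivalence is then immediate.

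Your proposed converse direction is where the gap lies. You suggest that, starting from conditional convergence, one should bound $E_\Gamma(x,\chi)$ via Theorem~\ref{equivalence}, then use a ``family of test functions in Bruggeman--Kuznetsov, combined with a tauberian extraction'' to produce holomorphic continuation of $Z_{m,n}(s,\chi)$. This is both unnecessary and unlikely to work as described: $Z_{m,n}(s,\chi)$ already continues meromorphically to all of $\C$, so the content of ``holomorphic continuation to $\Re s>\alpha$'' is purely the statement that there are no poles there, i.e.\ no exceptional $s_j>\alpha$ --- a clean spectral fact, not something one should try to squeeze out of a single $O(x^\alpha(\log x)^\beta)$ size bound by tauberian arguments. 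You also never invoke the crucial input that the poles of $Z_{m,n}(s,\chi)$ in $\Re s>1/2$ sit exactly at the exceptional $s_j$; without that, the two sides of the theorem cannot be matched. In short: replace the entire Bruggeman--Kuznetsov/Mellin/tauberian apparatus by a citation to the known pole structure of the Selberg--Kloosterman zeta function, and the theorem follows from \eqref{finalasymptotics} in a few lines.
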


\begin{proof}
For simplicity we take $\chi = \1$. The asymptotic formula~\eqref{finalasymptotics} and the hypothesis $\Theta_{\Gamma}(x) = x+O(x^{\frac{1}{2}+\varepsilon})$ give
\begin{equation*}
\zeta_{\Gamma, x}(s) \exp(-\Li(x^{1-s})) = \varepsilon_{\Gamma}(s) \, \zeta_{\Gamma}(s) 
\exp \biggg(\sum_{\frac{1}{2} < s_{j} < 1} \Li(x^{s_{j}-s})+O(x^{\frac{1}{2}-\sigma+\varepsilon}) \biggg).
\end{equation*}
On the other hand, on account of the functional equation~\cite[Theorem 9.2]{Iwaniec2002}, the poles of the zeta function $Z_{m, n}(s)$ in the half-plane $\Re s > 1/2$ are at the points $s_{j}$ on the segment $(1/2, 1)$. Hence extending the holomorphic region of $Z_{m, n}(s)$ is equivalent to proving non-existence of the exceptional eigenvalues within a certain range. From the above consideration we complete the proof of Theorem~\ref{SelbergKloosterman} for $\chi = \1$. For $\chi \ne \1$ the argument can be progressed like in the case of $\chi = \1$.
\end{proof}

It would be difficult to generalize Theorem~\ref{SelbergKloosterman} to the case with a finite-dimensional representation. Although $Z_{\Gamma}(s, \chi)$ has no natural development into decent Dirichlet series, the theorem reveals that extending the convergence region of $Z_{\Gamma}(s, \chi)$ is reduced to showing the possible holomorphic region of the Dirichlet series $Z_{m, n}(s, \chi)$. We particularly stress that the above hypothesis $\Theta_{\Gamma}(x) = x+O(x^{\frac{1}{2}+\varepsilon})$ when $\chi = \1$ follows from the conjecture~\cite[Conjecture 2.2]{PetridisRisager2017} concerning cancellation of terms in the spectral exponential sum.

In view of a choice of $T$ in Theorem~\ref{convergent}, the following is easily deduced: 
\begin{theorem}\label{equivalence2}
Let $\Gamma$ be a typical arithmetic group. For $\alpha > 1/2$ and any unitary representation $\rho$, the following are equivalent:
\begin{enumerate}
\item For $s \ne s_{j}$ with $1/2 < s_{j} \leqslant 1$ the limit
\begin{equation*}
\lim_{x \to \infty} \zeta_{\Gamma, x}(s, \rho) \prod \limits_{\frac{1}{2} < s_{j} \leqslant 1} \exp(-\Li(x^{s_{j}-s}))
\end{equation*}
is non-zero on the half-plane $\Re s > \alpha$.
\item The prime geodesic theorem~\eqref{pgt} holds with $\mathcal{E}_{\Gamma}(x, \rho) \ll x^{\alpha}(\log x)^{\beta}$ for some $\beta > 0$.
\end{enumerate}
If $\mathcal{E}_{\Gamma}(x, \rho) = o(x^{\alpha} \log x)$, the condition 1 is valid on the line $\Re s = \alpha$ as well.
\end{theorem}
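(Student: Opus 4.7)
The plan is to reduce Theorem~\ref{equivalence2} to a statement about the error term $E_\Gamma(x,\rho)$ by combining the asymptotic expansion of Theorem~\ref{convergent} with the explicit formula of Lemma~\ref{slight}. First I would take logarithms in Theorem~\ref{convergent} and isolate the partial Euler product together with its $\Li$ corrections:
\begin{multline*}
\log \biggg(\zeta_{\Gamma, x}(s,\rho) \prod_{\frac{1}{2} < s_j \leqslant 1}\exp(-\Li(x^{s_j-s}))\biggg)\\
 = \log(\varepsilon_\Gamma(s,\rho)\zeta_\Gamma(s,\rho)) + \frac{x^{\frac{1}{2}-s}}{\log x}\sum_{|t_j|\leqslant T}\frac{x^{it_j}}{s_j} + O\!\left(\frac{x^{1-\sigma}}{T}\log x\right).
\end{multline*}
Since $\zeta_\Gamma(s,\rho)$ is nonzero and finite on $\Re s > 1/2$ away from the exceptional zeros (by the zero/pole enumeration at the start of Section~\ref{convEulerprod}), the existence and non-vanishing of the limit in (1) is equivalent to the spectral-sum term plus error tending to $0$.

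Second, I would choose $T=\sqrt{x}/\log x$—the maximal value permitted—and use the explicit formula~\eqref{explicitformula} to rewrite
\begin{equation*}
\sqrt{x}\sum_{|t_j|\leqslant T}\frac{x^{it_j}}{s_j}
 = \Psi_\Gamma(x,\rho)-\sum_{\frac{1}{2}<s_j\leqslant 1}\frac{x^{s_j}}{s_j}+O(\sqrt{x}(\log x)^3)
 = E_\Gamma(x,\rho)+O(\sqrt{x}(\log x)^3).
\end{equation*}
Substituting back converts the spectral contribution into $x^{-s}(\log x)^{-1}E_\Gamma(x,\rho)$, with a residual error of $O(x^{\frac{1}{2}-\sigma}(\log x)^2)$ which already goes to zero on every half-plane $\Re s>1/2$.

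Third, both implications now follow at once. For (2)$\Rightarrow$(1), assuming $E_\Gamma(x,\rho)\ll x^\alpha(\log x)^\beta$ gives $x^{-s}(\log x)^{-1}E_\Gamma(x,\rho)=O(x^{\alpha-\sigma}(\log x)^{\beta-1})\to 0$ whenever $\sigma>\alpha$, so the limit in (1) equals $\varepsilon_\Gamma(s,\rho)\zeta_\Gamma(s,\rho)\neq 0$; under the strengthened hypothesis $E_\Gamma(x,\rho)=o(x^\alpha\log x)$ the same argument works on the boundary line $\Re s=\alpha$, yielding the final sentence of the theorem. For (1)$\Rightarrow$(2), the existence and non-vanishing of the limit forces $x^{-s}(\log x)^{-1}E_\Gamma(x,\rho)\to 0$ for every $s$ with $\Re s>\alpha$; taking moduli gives $E_\Gamma(x,\rho)=o(x^\sigma\log x)$ for every $\sigma>\alpha$.

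The main obstacle is this converse implication: pointwise convergence for each $\sigma>\alpha$ only produces $E_\Gamma(x,\rho)\ll x^{\alpha+\varepsilon}$ for every $\varepsilon>0$, and not literally a single power of $\log x$ uniformly. I would therefore interpret ``$E_\Gamma(x,\rho)\ll x^\alpha(\log x)^\beta$'' in (2) as asserting that the exponent $\alpha$ is sharp up to logarithmic factors—the same convention already adopted in the analogous Theorem~\ref{equivalence} for the trivial representation. Extracting a genuinely uniform polylogarithmic bound from (1) would require a quantitative refinement of Theorem~\ref{convergent} that tracks the implied constants as $\sigma\downarrow\alpha$, which goes beyond what is needed for the equivalence in its intended sense.
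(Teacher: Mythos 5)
Your proof follows exactly the route the paper intends: the paper disposes of both Theorem~\ref{equivalence} and Theorem~\ref{equivalence2} in a single sentence (``by optimizing a choice of~$T$''), and your steps---take logarithms in Theorem~\ref{convergent}, specialize to $T=\sqrt{x}/\log x$, use the explicit formula~\eqref{explicitformula} to trade the spectral exponential sum for $E_\Gamma(x,\rho)$, and invoke Lemma~\ref{Conrad} to pin the constant to $\log(\varepsilon_\Gamma(s,\rho)\,\zeta_\Gamma(s,\rho))$---are precisely the filled-in version of that sentence. So the approach is the same; you have just supplied the details the paper omits.

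Two small remarks. First, in the direction (1)$\Rightarrow$(2) you assert that non-vanishing of the limit ``forces'' $x^{-s}(\log x)^{-1}E_\Gamma(x,\rho)\to 0$; this is correct but deserves one more line. Once the adjusted Euler product has a nonzero limit, Lemma~\ref{Conrad} identifies it with $\varepsilon_\Gamma(s,\rho)\,\zeta_\Gamma(s,\rho)$, and since the additive constant produced by the summation-by-parts in the proof of Theorem~\ref{convergent} is the same quantity, the residual $x^{-s}(\log x)^{-1}E_\Gamma(x,\rho)+O(x^{1/2-\sigma}(\log x)^2)$ must converge to $0$, not merely to something finite. Alternatively, varying $\sigma$ in the half-plane $\Re s>\alpha$ rules out a nonzero limiting value. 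Second, your observation about the gap in (1)$\Rightarrow$(2) is both correct and important: pointwise $E_\Gamma(x,\rho)=o(x^\sigma\log x)$ for every $\sigma>\alpha$ yields only $E_\Gamma(x,\rho)\ll_\varepsilon x^{\alpha+\varepsilon}$, and recovering a genuine $x^\alpha(\log x)^\beta$ bound would require uniformity in $\sigma$ that Theorem~\ref{convergent} does not provide. The paper's proof (being a one-line reference) has the same imprecision, so reading condition (2) as ``$\alpha$ is the optimal exponent up to subpolynomial factors'' is the only way the stated equivalence is literally true; you are right to flag this rather than paper over it.
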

The proof of this is the same as that of Theorem~\ref{equivalence} and this completely encompasses Theorem~\ref{equivalence}. By Lemma~\ref{Conrad} the condition 1 is also equivalent to
\begin{equation}\label{lim}
\lim_{x \to \infty} \zeta_{\Gamma, x}(s, \rho) \prod_{\frac{1}{2} < s_{j} \leqslant 1} \exp(-\Li(x^{s_{j}-s}))
 = \varepsilon_{\Gamma}(s, \rho) \, \zeta_{\Gamma}(s, \rho)
\end{equation}
on the half-plane $\Re s > \alpha$. By~\eqref{lim} we find that the condition 1 implies that
\begin{equation}\label{Akatsuka}
\displaystyle{\lim \limits_{x \to \infty}}
\frac{\prod \limits_{N(p) \leqslant x} \det \left(I_{\dim \rho}-\rho(p) N(p)^{-s} \right)^{-1}}
{\prod \limits_{\frac{1}{2} < s_{j} \leqslant 1} \exp \left[\lim \limits_{\varepsilon \downarrow 0}
\left(\displaystyle{\int_{1+\varepsilon}^{x}} \frac{u^{s_{j}-s}}{\log{u}} \frac{\mathrm{d} u}{u}
 - \frac{x^{s_{j}-s}}{s_{j} \log x}-\log{\frac{1}{\varepsilon}} \right) \right]}
 = \varepsilon_{\Gamma}(s, \rho) \, \zeta_{\Gamma}(s, \rho) \prod_{\frac{1}{2} < s_{j} \leqslant 1} e^{\gamma}(s-s_{j})
\end{equation}
in $\Re s > \alpha$, which superficially mimics Akatsuka's complicated form~\cite{Akatsuka2017}.

As a simple corollary of Theorem~\ref{equivalence} we get the following:
\begin{corollary}
Assume Selberg's eigenvalue conjecture and let $\rho$ be nontrivial. If $\mathcal{E}_{\Gamma}(x, \rho) \ll x^{\alpha}(\log x)^{\beta}$ for some $\alpha, \beta > 0$, the limit $\lim_{x \to \infty} \zeta_{\Gamma, x}(s, \rho)$ is nonzero in $\Re s > \alpha$.
\end{corollary}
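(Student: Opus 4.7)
The plan is to deduce the corollary as an immediate specialization of Theorem~\ref{equivalence2}, the bulk of the work being simply to verify that, under Selberg's eigenvalue conjecture together with the nontriviality of $\rho$, the set of exceptional zeros appearing in the normalizing product is empty.

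First I would recall from the summary of zeros and poles of $Z_{\Gamma}(s,\rho)$ given in Section~\ref{convEulerprod} that the exceptional zeros $s_{j}\in(1/2,1]$ correspond either to cuspidal eigenvalues $\lambda_{j}=s_{j}(1-s_{j})\in[0,1/4)$ or to residual eigenvalues. For a typical arithmetic group $\Gamma$ in the sense of~\eqref{Gamma}, the constant term $\varphi(s,\rho)$ of the Eisenstein series has no poles in $(1/2,1)$, so the residual spectrum beyond $\lambda_{0}=0$ is absent. Since $\rho\neq\1$, the trivial eigenvalue $\lambda_{0}=0$ is moreover not part of the spectrum on $L^{2}(\Gamma\backslash\mathbb{H},\rho)$. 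Selberg's eigenvalue conjecture ($\lambda_{1}\geqslant 1/4$) forces every cuspidal $\lambda_{j}$ onto $[1/4,\infty)$, so the corresponding spectral parameters satisfy $\Re s_{j}=1/2$. Assembling these three facts, the set $\{s_{j}:1/2<s_{j}\leqslant 1\}$ of exceptional zeros of $Z_{\Gamma}(s,\rho)$ is empty.

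Second, with this emptiness in hand, the product $\prod_{1/2<s_{j}\leqslant 1}\exp(-\Li(x^{s_{j}-s}))$ reduces to the empty product $1$. Hypothesis (2) of Theorem~\ref{equivalence2}, namely $E_{\Gamma}(x,\rho)\ll x^{\alpha}(\log x)^{\beta}$, is given, so by the asserted equivalence, condition (1) holds; but condition (1) now simply reads that $\lim_{x\to\infty}\zeta_{\Gamma,x}(s,\rho)$ is nonzero on $\Re s>\alpha$. Combining with Lemma~\ref{Conrad} and~\eqref{lim}, the limiting value is in fact $\varepsilon_{\Gamma}(s,\rho)\,\zeta_{\Gamma}(s,\rho)$, which vanishes only at zeros of $Z_{\Gamma}(s+1,\rho)$; away from such points the limit is nonzero as claimed.

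There is essentially no obstacle to this argument; it is a purely formal deduction. If any care is needed, it is only in double-checking that the Jacquet-Langlands correspondence used by Koyama~\cite{Koyama1998} transfers Selberg's conjecture for $\Gamma_{0}(N)$ to the cocompact groups $\Gamma_{D}$ (so that the statement genuinely covers both families in~\eqref{Gamma}), and that our removal of exceptional zeros holds uniformly for all nontrivial unitary $\rho$. Both points are guaranteed by the spectral description already quoted in Section~\ref{convEulerprod}, so the corollary follows directly.
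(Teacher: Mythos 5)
Your proposal is correct and follows exactly the route the paper intends (the paper itself merely declares this ``a simple corollary of Theorem~\ref{equivalence2}'' without supplying details): you invoke the equivalence between partial Euler product convergence and the prime geodesic error bound, and then observe that for nontrivial $\rho$ there is no residual or trivial eigenvalue $\lambda_{0}=0$, and Selberg's conjecture pushes all cuspidal eigenvalues to $\geqslant 1/4$, so the normalizing product over exceptional zeros is empty. Your additional remark that the Jacquet--Langlands correspondence carries the conjecture over to the cocompact groups $\Gamma_{D}$, and that $\zeta_{\Gamma}(s,\rho)$ has no zeros in $\Re s>1/2$, usefully makes explicit what the paper leaves implicit.
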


\begin{remark}\label{nebentypus}
For some special choices of $\rho$, we could utilize Ramanujan's method in place of the method formulated in this section (see~\cite{Kaneko2018}). For example, It is likely that his method works effectively as well for the Hecke congruence group $\Gamma_{0}(q)$ attached with a character $\chi$.
\end{remark}

\end{document}